\newtheorem{theorem}{Theorem}[section]
\newtheorem{proposition}[theorem]{Proposition}
\newtheorem{lemma}[theorem]{Lemma}
\newtheorem{corollary}[theorem]{Corollary}
\newtheorem{remark}[theorem]{Remark}
\newcommand{\ble}{\begin{lemma}}
\newcommand{\ele}{\end{lemma}}
\newcommand{\be}{\begin{equation*}}
\newcommand{\ee}{\end{equation*}}
\newcommand{\bel}{\begin{equation}}
\newcommand{\eel}{\end{equation}}
\newcommand{\fr}{\frac }
\newcommand{\lap}{\Delta}
\newcommand{\N}{\mathbb{N}}
\newcommand{\na}{\nabla}
\newcommand{\R}{\mathbb{R}}
\renewcommand{\to}{\rightarrow}
\newcommand{\To}{\longrightarrow}
\newcommand{\xip}{x_{i,p}}
\newcommand{\xjp}{x_{j,p}}
\newcommand{\mip}{\mu_{i,p}}
\newcommand{\mjp}{\mu_{j,p}}
\newcommand{\upp}{u_p}
\def\sideremark#1{\ifvmode\leavevmode\fi\vadjust{\vbox to0pt{\vss% the remark
 \hbox to 0pt{\hskip\hsize\hskip1em%                          will appear only
 \vbox{\hsize2.1cm\tiny\raggedright\pretolerance10000%          on the side
  \noindent #1\hfill}\hss}\vbox to15pt{\vfil}\vss}}}%
\numberwithin{equation}{section}
\begin{document}

\title[]{Asymptotic profile of positive solutions of Lane-Emden problems in dimension two}

\author[]{Francesca De Marchis, Isabella Ianni, Filomena Pacella}

\address{Francesca De Marchis, University of Roma {\em Sapienza}, P.le Aldo Moro 5, 00185 Roma, Italy}
\address{Isabella Ianni, Second University of Napoli, V.le Lincoln 5, 81100 Caserta, Italy}
\address{Filomena Pacella, University of Roma {\em Sapienza}, P.le Aldo Moro 5, 00185 Roma, Italy}

\thanks{2010 \textit{Mathematics Subject classification:} 35B05, 35B06, 35J91. }

\thanks{ \textit{Keywords}: semilinear elliptic equations, superlinear elliptic boundary value problems, asymptotic analysis, concentration of solutions, positive solutions.}

\thanks{Research partially supported by: PRIN $201274$FYK7$\_005$ grant, INDAM - GNAMPA and Fondi Avvio alla Ricerca Sapienza 2015}

\begin{abstract} We  consider families $u_p$ of solutions to the  problem
\begin{equation}\label{problemAbstract}\left\{\begin{array}{lr}-\Delta u= u^p & \mbox{ in }\Omega\\
u>0 & \mbox{ in }\Omega\\
u=0 & \mbox{ on }\partial \Omega
\end{array}\right.\tag{$\mathcal E_p$}
\end{equation}
where $p>1$ and $\Omega$ is a smooth bounded domain of $\R^2$. Under the condition
\begin{equation}\label{EnergyBoundAbstract}p\int_{\Omega}|\nabla u_p|^2\,dx\to\beta\in\R\quad\mbox{as $p\to+\infty$}\tag{$\mathcal F$}
\end{equation} we give a complete description of the asymptotic behavior of  $u_p$ as $p\to+\infty$.
\end{abstract}

\maketitle

\section{Introduction}\label{section:intro}
We consider the Lane-Emden Dirichlet problem
\begin{equation}\label{problem}\left\{\begin{array}{lr}-\Delta u= |u|^{p-1}u\qquad  \mbox{ in }\Omega\\
u=0\qquad\qquad\qquad\mbox{ on }\partial \Omega
\end{array}\right.
\end{equation}
where $p>1$ and $\Omega\subset\R^2$ is a smooth bounded domain.

The aim of this paper is to provide a precise description of the asymptotic behavior, as $p\to+\infty$, of positive solutions of \eqref{problem} under a uniform bound of their energy, namely we consider any family $(\upp)$ of positive solutions to \eqref{problem} satisfying the condition
\begin{equation}
\label{energylimit}
p\int_{\Omega}|\nabla u_p|^2 dx\to\beta\in\R,\quad\mbox{as $p\to+\infty$}.
\end{equation}
Before stating our theorem let us review some known results. The first papers performing an asymptotic analysis of \eqref{problem}, as $p\to+\infty$, are \cite{RenWeiTAMS1994} and \cite{RenWeiPAMS1996} where the authors prove a $1$-point concentration phenomenon for least energy (hence positive) solutions to \eqref{problem} and derive some asymptotic estimates. Note that least energy solutions $(\upp)$ of the $2$-dimensional Lane-Emden problem satisfy the condition
\begin{equation}
\label{energylimit8pie}
p\int_{\Omega}|\nabla u_p|^2 dx\to8\pi e\in\R,\quad\mbox{as $p\to+\infty$}.
\end{equation}
which is a particular case of \eqref{energylimit}. Later, Adimurthi and Grossi (\cite{AdiGrossi}) identified a \emph{limit problem} by showing that suitable scalings of the least energy solutions $(\upp)$ converge in $C^2_{loc}(\R^2)$ to a regular solution $U$ of the Liouville problem
\begin{equation}\label{Liouville}
\left\{
\begin{array}{ll}
-\Delta U=e^U &\mbox{in $\R^2$}\\
\int_{\R^2}e^U dx=8\pi
\end{array}
\right.
\end{equation}
They also showed that $\|\upp\|_\infty$ converges to $\sqrt{e}$ as $p\to+\infty$, as it had been previously conjectured. Note that solutions to \eqref{problem}, satisfying \eqref{energylimit} do not blow up as $p\to+\infty$ (unlike the higher dimensional case when $p$ approaches the critical exponent). Concerning general positive solutions (i.e. not necessarily with least energy) a first asymptotic analysis was carried out in \cite{DeMarchisIanniPacellaJEMS} (see also \cite{DeMarchisIanniPacellaSurvey}) showing that, under the condition \eqref{energylimit}, all solutions $(\upp)$ concentrate at a finite number of points in $\bar\Omega$. Let us observe that this result holds even for sign changing solutions and can be also obtained substituting \eqref{energylimit} by a uniform bound on the Morse index of the solutions $(\upp)$ (see \cite{DeMarchisIanniPacellaAMPA}).\\
The results obtained in \cite{DeMarchisIanniPacellaJEMS} were then applied to the study of the asymptotic behavior of some families $(v_p)$ of sign changing solutions in symmetric domains to prove that suitable scalings of the positive parts $(v_p^+)$ converge to the function $U$ in \eqref{Liouville} while suitable scalings and translations of the negative parts $(v_p^-)$ converge to a singular radial solution of
\begin{equation}\label{LiouvilleSingolare}
\left\{
\begin{array}{ll}
-\Delta V=e^V+H\delta_0 &\mbox{in $\R^2$}\\
\int_{\R^2}e^V dx<+\infty
\end{array}
\right.
\end{equation}
where $H$ is a suitable constant and $\delta_0$ is the Dirac measure concentrated at the origin. A similar result had been previously shown in \cite{GrossiGrumiauPacella1} in the case of nodal radial solutions in the ball. Thus the limit profile of these sign-changing solutions looks like a superposition of two bubbles, one coming from the concentration of the positive parts and another coming from the concentration of the negative parts, both at the same point.\\
It is natural to ask weather a similar phenomenon appears when dealing with positive solutions. As a byproduct of our results we prove that this is not the case and in fact all concentration points of positive solutions are simple (and isolated) in the sense that there are not concentrating sequences converging at the same point. Moreover no scalings of positive solutions can converge to a singular solution of the Liouville problem in $\R^2$ and the concentration points are far away from the boundary of $\Omega$.\\
More precisely we prove:
\begin{theorem}
\label{teo:Positive}
Let $(u_p)$ be a family of positive solutions to \eqref{problem} satisfying
\eqref{energylimit}.\\
Then there exists a finite set of points $\mathcal S=\{x_1,\ldots,x_k\}\in\Omega$, $k\in\N\setminus\{0\}$ such that, up to sequences, $(u_p)$ satisfies the following properties:
\begin{itemize}
\item[$(i)$] $\sqrt{p}u_p\to0$ \ in $C^2_{loc}(\bar\Omega\setminus \mathcal S)$, \ as $p\to+\infty$;
\item[$(ii)$]
\[
pu_p(x)\to  8\pi \sum_{i=1}^k m_i G(x,x_i)\mbox{ \ in $C^2_{loc}(\bar\Omega\setminus \mathcal S)$ \ as $p\to+\infty$,}
\]
where $m_i:=\lim_{p\rightarrow +\infty} \|u_p\|_{L^{\infty}(\overline{B_{\delta}(x_i)})}$, for $\delta>0$ sufficiently small and $G$ is the Green's function of $-\Delta$ in $\Omega$ under Dirichlet boundary conditions;
\item[$(iii)$]
\begin{equation}\label{energylimitm_i^2}
p\int_\Omega |\nabla u_p(x)|^2\,dx\to 8\pi \sum_{i=1}^k m_i^2,\  \mbox{ as }p\to+\infty;
\end{equation}
\item[$(iv)$] the concentration points $x_i, \ i=1,\ldots, k$ satisfy
\begin{equation}\label{x_j relazione}
m_i \nabla_x H(x_i,x_i)+\sum_{\ell\neq i } m_\ell \nabla_x G(x_i,x_\ell)=0,
\end{equation}
where
\begin{equation}\label{H parteregolareGreen}
H(x,y)=G(x,y)+\frac{\log(|x-y|)}{2\pi}
\end{equation}
is the regular part of the Green's function $G$;
\item[$(v)$]
\[m_i \geq \sqrt{e},\qquad \forall \ i=1,\ldots,k.\]
\end{itemize}
\end{theorem}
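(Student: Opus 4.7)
The starting point is the concentration theorem of \cite{DeMarchisIanniPacellaJEMS}: under \eqref{energylimit} there exist a finite set $\mathcal{S}=\{x_1,\ldots,x_k\}\subset\overline\Omega$ and sequences of local maximum points $x_{i,p}\to x_i$ of $u_p$ such that $u_p\to 0$ uniformly on compact subsets of $\overline\Omega\setminus\mathcal{S}$ and, after the Adimurthi-Grossi rescaling around each $x_{i,p}$ at the scale $\mu_{i,p}^2=(p\,u_p(x_{i,p})^{p-1})^{-1}$, the functions
\begin{equation*}
v_{i,p}(y)=\frac{p}{u_p(x_{i,p})}\bigl(u_p(x_{i,p}+\mu_{i,p}y)-u_p(x_{i,p})\bigr)
\end{equation*}
converge in $C^2_{loc}(\R^2)$ either to a regular solution $U$ of \eqref{Liouville} or to a singular solution $V$ of \eqref{LiouvilleSingolare}.

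The crux of the argument is to rule out, under positivity, the singular profile \eqref{LiouvilleSingolare} and any further bubble at smaller scales around $x_i$ (\emph{simple concentration}). This is exactly what distinguishes the positive case from the sign-changing one, where such profiles do arise as recalled in the introduction. The negative Dirac at the origin in \eqref{LiouvilleSingolare} would come from an inner concentration at a strictly finer scale, forcing $u_p$ to oscillate near $x_i$ and contradicting $u_p>0$; a Harnack-type argument on concentric annuli around $x_i$ (which uses positivity in an essential way) then excludes any intermediate bubble and shows that $\|u_p\|_{L^\infty(B_\delta(x_i))}$ converges to a finite limit $m_i$. Evaluating $v_{i,p}\to U$ at $y=0$ yields $u_p(x_{i,p})\to\sqrt{e}$ as in \cite{AdiGrossi}, hence $m_i\geq\sqrt{e}$, proving (v).

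From simple concentration, the scaling $x=x_{i,p}+\mu_{i,p}y$ together with $\int_{\R^2}e^U\,dy=8\pi$ yields the mass quantization
\begin{equation*}
\int_{B_\delta(x_i)}p\,u_p^{p}\,dx\to 8\pi m_i,\qquad \int_{B_\delta(x_i)}p\,u_p^{p+1}\,dx\to 8\pi m_i^2,
\end{equation*}
whence (iii) follows from $\int_\Omega|\nabla u_p|^2\,dx=\int_\Omega u_p^{p+1}\,dx$ together with the uniform decay of $u_p$ off $\mathcal{S}$. Writing the Green's representation $pu_p(x)=\int_\Omega G(x,y)\,p\,u_p^p(y)\,dy$ and using the weak convergence $p\,u_p^p\,dx \rightharpoonup 8\pi\sum_i m_i\delta_{x_i}$, one gets $pu_p(x)\to 8\pi\sum_i m_i G(x,x_i)$ in $C^2_{loc}(\overline\Omega\setminus\mathcal{S})$ via standard elliptic regularity applied to $-\Delta(pu_p)=p\,u_p^p$, which is (ii); since the limit vanishes on $\partial\Omega$ and $m_i>0$, each $x_i$ lies in the interior of $\Omega$. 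The bound $pu_p=O(1)$ combined with $u_p\to 0$ uniformly on compacts of $\overline\Omega\setminus\mathcal{S}$ yields (i).

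For (iv), the local Pohozaev identity on each $B_\delta(x_i)$ reads
\begin{equation*}
\int_{\partial B_\delta(x_i)}\Bigl(\partial_\nu u_p\,\partial_j u_p-\tfrac12|\nabla u_p|^2\nu_j\Bigr)dS=-\frac{1}{p+1}\int_{\partial B_\delta(x_i)}u_p^{p+1}\nu_j\,dS.
\end{equation*}
Multiplying by $p^2$, the right-hand side is $o(1)$ since $u_p\to 0$ uniformly on $\partial B_\delta(x_i)$, while on the left the $C^1$ convergence from (ii) gives, as $p\to\infty$,
\begin{equation*}
\int_{\partial B_\delta(x_i)}\Bigl(\partial_\nu G_*\,\partial_j G_*-\tfrac12|\nabla G_*|^2\nu_j\Bigr)dS,
\end{equation*}
with $G_*:=8\pi\sum_\ell m_\ell G(\cdot,x_\ell)$. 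Using the decomposition \eqref{H parteregolareGreen} and letting $\delta\to 0^+$, the singular self-interaction from $-\tfrac{1}{2\pi}\log|\cdot-x_i|$ cancels by symmetry and the residues give precisely \eqref{x_j relazione}. \emph{Main obstacle:} the decisive and technically hardest step is the simple-concentration / no-tower statement in the second paragraph, as it is what quantizes the blow-up mass in integer multiples of $8\pi$ and underpins every subsequent convergence.
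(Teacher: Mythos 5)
Your skeleton (Green representation for $(ii)$, local Pohozaev for $(iv)$, local mass quantization feeding $(iii)$) matches the paper's, but three steps as you present them are genuine gaps. First, the decisive step — that the local mass at each $x_i$ is exactly $8\pi m_i$ (your ``simple concentration'') — is only asserted via an unspecified ``Harnack-type argument on concentric annuli''; you acknowledge this as the main obstacle but never supply it. The paper does something different and concrete: it proves $\beta_{j,p}:=\frac{p}{u_p(y_{j,p})}\int_{B_r(y_{j,p})}u_p^p\to 8\pi$ (Proposition \ref{PropositionBetaConvergente}) by combining the lower bound $\geq 8\pi$ from Fatou with an \emph{upper} bound obtained from the interior Pohozaev identity, which yields $\lim_p p\int_{B_\delta(x_i)}u_p^{p+1}=\gamma_i^2/(8\pi)+O(\delta)$ and, matched against $\gamma_i=\lim\alpha_{i,p}(\delta)u_p(y_{i,p})$, forces $\lim\alpha_{i,p}\leq 8\pi$. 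No Harnack or positivity-based oscillation argument appears; positivity enters only through the sign of $u_p^p$ in the Green representation and the definition of the local maxima. Second, your derivation of $(v)$ is wrong: $v_{i,p}(0)=0=U(0)$ holds by construction and gives no information on $u_p(x_{i,p})$, and the claim $u_p(x_{i,p})\to\sqrt e$ is precisely the conjecture \eqref{conjecture} that the paper leaves open. What the paper actually proves is only the inequality $m_i\geq\sqrt e$, and it does so via the Ren--Wei inequality \eqref{stimaRenWey} with the asymptotically sharp constant \eqref{Dptende}: cutting off $u_p$ near $x_i$ gives $p\int_{B_r(x_i)}|\nabla u_p|^2\geq 8\pi e+o_p(1)$, while the rescaling computation gives $p\int_{B_r(x_i)}|\nabla u_p|^2=8\pi m_i^2+o_p(1)$, whence $m_i^2\geq e$.

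Third, you dismiss the interiority of the concentration points with ``since the limit vanishes on $\partial\Omega$ and $m_i>0$, each $x_i$ lies in the interior.'' This does not work: if $x_i\in\partial\Omega$ the limit $\sum_j\gamma_j G(\cdot,x_j)$ is still a perfectly consistent bounded function near $x_i$ (the Green function and its normal derivative stay bounded as the pole approaches the boundary), so no contradiction arises at this level. The paper needs a dedicated argument (Proposition \ref{proposition:noBoundary}): a Pohozaev identity on $\Omega\cap B_\delta(x_i)$ with the pivot $y_p$ chosen on the normal line so that the boundary term $\int_{\partial\Omega\cap B_\delta}(\partial_\nu u_p)^2\langle x-y_p,\nu\rangle\,ds$ vanishes identically, after which all remaining terms are $O(\delta^2)$ and contradict the characterization of $\mathcal S$ as the set where local $L^{p+1}$ mass does not vanish. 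Finally, a smaller inaccuracy: the dichotomy ``regular profile \eqref{Liouville} or singular profile \eqref{LiouvilleSingolare}'' is not part of the known concentration result you start from; property $(\mathcal P_2^k)$ already gives convergence to the regular $U$ for the canonical rescalings, and the singular profiles of the sign-changing case arise only for differently translated rescalings of the negative parts, so there is nothing of that kind to ``rule out'' here.
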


Note that in particular we get
\[
\lim_{p\rightarrow +\infty} \|u_p\|_{\infty}\geq \sqrt{e}
\]
so that, by \eqref{energylimit} and \eqref{energylimitm_i^2} it follows:
\[
\beta=8\pi \sum_{i=1}^k m_i^2\geq k\ 8\pi e
\]
and hence the number of concentration points $k$ is estimated by:
\[ k\leq \left[\frac{\beta}{8\pi e}\right].\]

\

\begin{remark}
As observed before, for least energy solutions the limit \eqref{energylimit8pie} holds so that Theorem \ref{teo:Positive} implies that $k=1$, which was known from \cite{RenWeiPAMS1996},  and that  $\lim\limits_{p\to+\infty}\|u_p\|_{\infty}=\sqrt{e}$, which was already proved in \cite{AdiGrossi}.\\
We conjecture that for any family of positive solutions $u_p$ satisfying \eqref{energylimit} it should hold:
\begin{equation}\label{conjecture}
m_i=\sqrt{e},\quad \forall\, i=1,\ldots,k,
\end{equation}
and so in particular $\lim\limits_{p\to+\infty}\|u_p\|_{\infty}=\sqrt{e}$.\\
Note that if \eqref{conjecture} holds we would have a precise quantization of the energy which would imply that the limit energy level $\beta$ in \eqref{energylimit} is exactly:
\[
\beta=k\ 8\pi e, \;\, k\in\N\setminus\{0\}.
\]
Hence, for $p$ large, positive solutions $(\upp)$ to \eqref{problem} could exist only at levels of energy $p\int_\Omega |\nabla u_p|^2 dx$ close to a multiple of $8\pi e$. Therefore for positive solutions of Lane-Emden problems in dimension two the constant $8\pi e$ would play the same role as the Sobolev constant $S$ in dimension higher than or equal to $3$.
\\
We recall that concentrating positive solutions satisfying \eqref{conjecture} have been constructed in \cite{EspositoMussoPistoiaPos} for non simply connected  domains.
\end{remark}

The starting point to prove Theorem \ref{teo:Positive} is the asymptotic analysis performed in \cite{DeMarchisIanniPacellaJEMS} (see Section \ref{section:preliminaryresults}). Then the proof proceeds following some arguments used in \cite{SantraWei} to study the asymptotic behavior of solutions of biharmonic equations.

The outline of the paper is the following. In Section \ref{section:preliminaryresults} we recall preliminary results. In Section \ref{section:noconcentrationboundary} we show that the concentration points cannot belong to the boundary of $\Omega$. In Section \ref{section:scalinglocalmaxima} we analyze the rescaling around the local maxima of $u_p$. In Section \ref{section:conclusion} 
we %estimate the $L^{\infty}$-norm of the solutions and 
conclude the proof of Theorem \ref{teo:Positive}.

\

\

\

\

\section{Preliminary results}\label{section:preliminaryresults}

\

We start by recalling the classical Pohozaev identity.
\begin{lemma}[Pohozaev identity \cite{Pohozaev, PucciSerrin}] Let $A\subset \mathbb R^2$ be a smooth bounded domain, $u\in C^2(\bar A)$ a solution of $-\Delta u= f(u)$ and $F(u)=\int_0^u f(t)dt$. Then
\begin{eqnarray}
\label{Pohozaev}\nonumber
2\int_{A}F(u)dx &=& \int_{\partial A} F(u(x)) \langle x-y, \nu(x) \rangle ds_x
 -\frac{1}{2}\int_{\partial A}|\nabla u(x)|^2\langle x-y, \nu(x)\rangle ds_x
\\
&&
 + \int_{\partial A}\langle x-y, \nabla u(x)\rangle \langle \nabla u(x), \nu(x) \rangle ds_x
\end{eqnarray}
where $\nu(x)$ denotes the outer normal  at $\partial A$ at $x$, and $y\in \mathbb R^2$.
\end{lemma}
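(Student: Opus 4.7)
The plan is to establish the identity by the standard multiplier technique: test the equation $-\Delta u = f(u)$ against the vector field $\langle x-y,\nabla u(x)\rangle$ (which plays the role of the Pohozaev multiplier) and integrate by parts, then rearrange.

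First I would rewrite the right-hand side. Since $f(u)\nabla u = \nabla F(u)$, I have
\[
\int_A f(u)\,\langle x-y,\nabla u\rangle\,dx=\int_A \langle x-y,\nabla F(u)\rangle\,dx.
\]
A single application of the divergence theorem, together with $\mathrm{div}(x-y)=2$ in $\mathbb{R}^2$, converts this into
\[
\int_{\partial A} F(u)\,\langle x-y,\nu\rangle\,ds-2\int_A F(u)\,dx.
\]

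Next I would work on the left-hand side, namely $-\int_A \Delta u\,\langle x-y,\nabla u\rangle\,dx$. Writing $g(x):=\langle x-y,\nabla u(x)\rangle$ and integrating by parts once yields
\[
-\int_A \Delta u\,g\,dx=\int_A \nabla u\cdot\nabla g\,dx-\int_{\partial A}\langle \nabla u,\nu\rangle\,g\,ds.
\]
A direct computation gives $\partial_i g=\partial_i u+\sum_j(x_j-y_j)\partial_{ij}u$, hence
\[
\nabla u\cdot\nabla g=|\nabla u|^2+\tfrac{1}{2}\langle x-y,\nabla(|\nabla u|^2)\rangle.
\]
Integrating the last term by parts, and again using $\mathrm{div}(x-y)=2$, produces
\[
\int_A \nabla u\cdot\nabla g\,dx=\tfrac{1}{2}\int_{\partial A}|\nabla u|^2\langle x-y,\nu\rangle\,ds,
\]
so the two interior integrals $\int_A |\nabla u|^2\,dx$ cancel exactly (this is the characteristic dimension-two cancellation).

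Finally I would equate the two sides and solve for $2\int_A F(u)\,dx$, which gives precisely \eqref{Pohozaev}. No step is really an obstacle; the only thing to watch is the dimensional factor in $\mathrm{div}(x-y)=2$, which is why both interior terms (the one coming from $F$ and the one coming from $|\nabla u|^2$) conspire to produce the clean formula stated. The hypothesis $u\in C^2(\bar A)$ guarantees that every integration by parts is justified on the smooth bounded domain $A$, so no approximation argument is needed.
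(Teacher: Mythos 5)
Your proof is correct and is precisely the classical Rellich--Pohozaev multiplier argument (testing against $\langle x-y,\nabla u\rangle$ and exploiting $\operatorname{div}(x-y)=2$ so that the interior $|\nabla u|^2$ terms cancel in dimension two); the paper itself gives no proof and simply cites \cite{Pohozaev, PucciSerrin}, where this same computation appears. All integrations by parts are justified under the stated hypotheses, and the final rearrangement reproduces \eqref{Pohozaev} exactly.
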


\

\

Assuming that $\partial\Omega\in C^2$, we consider the Green's function of $-\Delta$ on $\Omega$ under Dirichlet boundary conditions, namely the function $G$ which satisfies for any $y\in\Omega$
\begin{equation}
\label{eqGreenFunction}
\left\{
\begin{array}{lr}
-\Delta_x G(x,y)=\delta_y(x) & x\in\Omega
\\
G(x,y)=0 & x\in\partial\Omega
\end{array}
\right.
\end{equation}
where $\delta_y$ is the Dirac mass supported in $y$.

\

We denote by $H(x,y)$ the regular part of $G$, namely
\begin{equation}\label{defH}
H(x,y)=G(x,y)+\frac{1}{2\pi}\log|x-y|,
\end{equation}
which satisfies, for all $y\in\Omega$:
\begin{equation}
\label{eqH}
\left\{
\begin{array}{lr}
-\Delta_x H(x,y)=0 & x\in\Omega
\\
H(x,y)= \frac{1}{2\pi}\log|x-y| & x\in\partial\Omega.
\end{array}
\right.
\end{equation}

We recall that $H$ is a smooth function in $\Omega\times\Omega$, $G$ and $H$ are symmetric in $x$ and $y$. Moreover by the comparison principle
\begin{equation}
\label{boundH}
\frac{1}{2\pi}\log|x-y|< H(x,y)\leq C\quad\forall x,y\in\Omega,
\end{equation}
from which
\begin{equation}
\label{GMaggZero}
G(x,y)>0 \quad\forall x,y\in\Omega,
\end{equation}
and there exists $C_{\delta}>0$ such that
\begin{equation}\label{GlimitataFuoriDaPalla}
G(x,y)\leq C_{\delta}\qquad \forall\ |x-y|\geq \delta>0.
\end{equation}
One can also prove that (see for instance \cite[Lemma A.2]{Daprile}
\[\frac{\partial H}{\partial \nu_x}(x,y)=\frac{1}{2\pi} \frac{\partial }{\partial \nu_x}\left(\log |x-y|\right) +O(1), \quad \forall\, x\in\partial\Omega,\ \forall\, y\in\Omega\]
from which
\begin{equation}\label{GreenBoundedAtBoundary}
\left|\frac{\partial G}{\partial \nu_x}(x,y)\right|\leq C, \quad \forall \,x\in\partial\Omega,\ \forall\, y\in\Omega.
\end{equation}
Moreover (see for instance \cite{DallAcquaSweers}) one also has
\begin{equation}
\label{boundDerivataG}
|\nabla_x G(x,y)|\leq \frac{C}{|x-y|}\quad\forall x,y\in\Omega,\ x\neq y.
\end{equation}

\

\

Next we recall results already known about the asymptotic behavior of a general family $u_p$ of nontrivial solutions of \eqref{problem}, even sign-changing, satisfying the condition \eqref{energylimit}. This part is mainly based on some of the results contained in \cite{DeMarchisIanniPacellaJEMS}, plus smaller additions or minor improvements.  \\

\

In \cite{RenWeiTAMS1994}  it has been proved that for any family
$(u_p)_{p>1}$ of nontrivial solutions of \eqref{problem}  the following lower bound holds
\begin{equation}
\label{energylimitLower}
\liminf_{p\rightarrow+\infty}p\int_{\Omega}|\nabla u_p|^2 dx\geq 8\pi e,
\end{equation}
which implies that the constant $\beta$ in \eqref{energylimit} satisfies $\beta\geq 8\pi e$.
\

If we denote by $E_p$ the energy functional associated to \eqref{problem}, i.e.
\[
E_p(u):=\frac{1}{2}\|\nabla u\|^2_{2}-\frac{1}{p+1}\|u\|_{p+1}^{p+1},\ \ u\in H^1_0(\Omega),
\]
since for a solution $u$  of \eqref{problem}
\begin{equation}
\label{energiaSuSoluzioni}
E_p(u)=(\frac12-\frac1{p+1})\|\nabla u\|^2_2=(\frac12-\frac1{p+1})\|u\|^{p+1}_{p+1},
\end{equation}
then \eqref{energylimit} and \eqref{energylimitLower} are equivalent to lower bounds for the limit of the energy $E_p$ or for the $L^{p+1}$-norm, namely
\[
\displaystyle\lim_{p\rightarrow +\infty}\  2 pE_p(u_p) = \lim_{p\rightarrow +\infty}\  p\int_{\Omega} |u_p|^{p+1} \, dx = \lim_{p\rightarrow +\infty}\  p\int_{\Omega} |\nabla u_p|^{2} \, dx=\beta\geq 8\pi e
\]
we will use all these equivalent formulations in the sequel.
\

\

Observe that by the assumption in \eqref{energylimit} we have that
\[E_p(u_p)\rightarrow 0, \ \ \|\nabla u_p\|_2\rightarrow 0,\ \  \mbox{as $p\rightarrow +\infty$}\]
so in particular $u_p\rightarrow 0$ a.e. as $p\rightarrow +\infty$.
\\
On the other side it is known that the solutions $u_p$ do not vanish as $p\rightarrow +\infty$ and that they do not blow-up, unlike the higher dimensional case. Indeed the following results hold:
\begin{proposition}\label{prop:BoundEnergia}
Let $(\upp)$ be a family of solutions to \eqref{problem} satisfying \eqref{energylimit}. Then
\begin{itemize}

\item[\emph{$(i)$}] (No vanishing).\\
\[\|u_p\|_{\infty}^{p-1}\geq \lambda_1,\] where $\lambda_1=\lambda_1(\Omega)(>0)$ is the first eigenvalue of the operator $-\Delta$ in $H^1_0(\Omega)$.
\item[$(ii)$] (Existence of the first bubble).

Let $(x_p^+)_p\subset\Omega$ such that $u_p(x_p^+)=\|u_p\|_{\infty}$.
Let us set
\begin{equation}\label{muppiu}
\mu_{p}^+:=\left(p |\upp(x_p^+)|^{p-1}\right)^{-\frac{1}{2}}
\end{equation}
and  for  $ x\in\widetilde{\Omega}_{p}^+:=\{x\in\R^2\,:\,x_p^++\mu_{p}^+x\in\Omega\}$
\begin{equation}\label{scalingMax}
v_{p}^+(x):=\fr{p}{\upp(x_p^+)}(\upp(x_p^++\mu_{p}^+ x)-\upp(x_{1,p})).
\end{equation}
Then  $\mu_{p}^+\to0$ as $p\to+\infty$  and
\[v_{p}^+\To U\mbox{ in }C^2_{loc}(\R^2)\mbox{ as }p\to+\infty\] where
\begin{equation}\label{v0}
U(x)=\log\left(\fr1{1+\fr18 |x|^2}\right)^2
\end{equation}
is the solution of $-\lap U=e^{U}$ in $\R^2$, $U\leq 0$, $U(0)=0$ and $\int_{\mathbb{R}^2}e^{U}=8\pi$.
\\
Moreover
\begin{equation}\label{soloMaggiore1InGenerale}
\liminf_{p\rightarrow +\infty}\|u_p\|_{\infty}\geq 1.
\end{equation}

\item[\emph{$(iii)$}] (No blow-up). There exists $C>0$ such that
\begin{equation}
\label{boundSoluzio}\|\upp\|_{L^\infty(\Omega)}\leq C,\ \mbox{  for all $p>1$.}
\end{equation}
\item[\emph{$(iv)$}] There exist constants $c,C>0$, such that for all $p$ sufficiently large we have
\begin{equation}
\label{boundEnergiap}c\leq p\int_\Omega |u_p(x)|^p dx \leq C.
\end{equation}
\item[\emph{$(v)$}] $\sqrt{p}u_p\rightharpoonup 0$ in $H^1_{0}(\Omega)$ as $p\rightarrow +\infty$.
\end{itemize}
\end{proposition}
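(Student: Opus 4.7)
The skeleton of the argument is the solution identity $\int_\Omega|\nabla u_p|^2\,dx=\int_\Omega|u_p|^{p+1}\,dx$ (obtained by testing \eqref{problem} against $u_p$), together with a blow-up analysis around a point $x_p^+$ at which $|u_p|=\|u_p\|_\infty$. For (i), testing against $u_p$ and using Poincar\'e give
\begin{equation*}
\lambda_1\int_\Omega u_p^2\,dx\leq\int_\Omega|\nabla u_p|^2\,dx=\int_\Omega|u_p|^{p+1}\,dx\leq\|u_p\|_\infty^{p-1}\int_\Omega u_p^2\,dx,
\end{equation*}
so $\|u_p\|_\infty^{p-1}\geq\lambda_1$. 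Hence $\|u_p\|_\infty\geq\lambda_1^{1/(p-1)}\to 1$, which is \eqref{soloMaggiore1InGenerale}, and $p\|u_p\|_\infty^{p-1}\to+\infty$, i.e.\ $\mu_p^+\to 0$.

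For (ii), a chain-rule calculation shows that $v_p^+$ solves
\begin{equation*}
-\Delta v_p^+=\sigma_p\bigl|1+\tfrac{v_p^+}{p}\bigr|^{p-1}\bigl(1+\tfrac{v_p^+}{p}\bigr)\quad\text{on }\widetilde\Omega_p^+,\qquad\sigma_p=\operatorname{sgn}u_p(x_p^+),
\end{equation*}
with $v_p^+(0)=0$ and $v_p^+\leq 0$ (since $|u_p|$ attains its maximum at $x_p^+$); hence the right-hand side is bounded by $1$ on each compact set. Standard elliptic $L^q$/Schauder estimates give a $C^{2,\alpha}_{loc}$ bound; assuming $\widetilde\Omega_p^+\to\mathbb R^2$, any subsequential limit $U$ solves $-\Delta U=\pm e^U$ with $U(0)=0$ and $U\leq 0$, the minus sign being ruled out by the strong maximum principle at the interior maximum $0$. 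Fatou and the $L^{p+1}$-bound used in (iii) yield $\int_{\mathbb R^2}e^U<\infty$, and the Chen--Li classification then forces $U$ to be \eqref{v0}.

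For (iii) I argue by contradiction: changing variables $y=(x-x_p^+)/\mu_p^+$ in the solution identity gives
\begin{equation*}
p\int_\Omega|\nabla u_p|^2\,dx=p\int_\Omega|u_p|^{p+1}\,dx\geq|u_p(x_p^+)|^2\int_{B_R(0)}\bigl|1+v_p^+/p\bigr|^{p+1}\,dy,
\end{equation*}
and the right-hand integral converges (once $U$ is known) to $\int_{B_R}e^U\,dy>0$, so $\|u_p\|_\infty\to\infty$ would contradict \eqref{energylimit}. Part (iv) follows from the same change of variable: H\"older gives the upper bound
\begin{equation*}
p\int_\Omega|u_p|^p\,dx\leq p^{1/(p+1)}\Bigl(p\int_\Omega|u_p|^{p+1}\,dx\Bigr)^{p/(p+1)}|\Omega|^{1/(p+1)}\to\beta,
\end{equation*}
while the lower bound comes from $p\int_{B_{R\mu_p^+}(x_p^+)}|u_p|^p\,dx=\|u_p\|_\infty\int_{B_R(0)}(1+v_p^+/p)^p\,dy$, which for $R$ large is bounded below by a positive constant since $\|u_p\|_\infty\geq 1+o(1)$ and $\int_{\mathbb R^2}e^U=8\pi$.

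For (v), $\|\nabla(\sqrt p\,u_p)\|_2^2=p\|\nabla u_p\|_2^2\to\beta$ gives a weak $H^1_0$-subsequential limit $w$; testing \eqref{problem} against $\phi\in C_c^\infty(\Omega)$ and using (iv),
\begin{equation*}
\Bigl|\int_\Omega\nabla(\sqrt p\,u_p)\cdot\nabla\phi\,dx\Bigr|=\sqrt p\,\Bigl|\int_\Omega|u_p|^{p-1}u_p\,\phi\,dx\Bigr|\leq\frac{\|\phi\|_\infty\,p\int_\Omega|u_p|^p\,dx}{\sqrt p}\to 0,
\end{equation*}
so $-\Delta w=0$ weakly on $\Omega$, and $w\in H^1_0(\Omega)$ forces $w\equiv 0$. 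The main obstacle I anticipate sits inside (ii): ensuring that $x_p^+$ stays at distance $\gg\mu_p^+$ from $\partial\Omega$, so that $\widetilde\Omega_p^+$ exhausts $\mathbb R^2$ and the limit $U$ is a genuine finite-mass Liouville solution on the whole plane. This boundary-exclusion step is precisely what allows the Chen--Li classification to apply and pin down $\int_{\mathbb R^2}e^U=8\pi$; without it one might end up with a limit equation on a half-plane, whose Dirichlet solutions carry infinite total mass.
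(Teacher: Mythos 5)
The paper does not actually prove this proposition in-house: its ``proof'' is a pointer to \cite{RenWeiTAMS1994} for $(i)$ and $(iii)$, to \cite{AdiGrossi} for $(ii)$, and to \cite{DeMarchisIanniPacellaJEMS, DeMarchisIanniPacellaSurvey} for $(v)$, with $(iv)$ obtained from $(iii)$ by H\"older together with \eqref{energylimitLower} and \eqref{energiaSuSoluzioni}. Your arguments for $(i)$, $(iii)$, $(iv)$ and $(v)$ are exactly the standard ones from those references and are correct (for the lower bound in $(iv)$ the references use the cheaper route $p\int|u_p|^p\geq p\int|u_p|^{p+1}/\|u_p\|_\infty\geq (8\pi e+o(1))/C$, which avoids invoking the bubble, but your version via the rescaled ball works too). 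Two small inaccuracies in $(ii)$: the factor $\sigma_p$ is spurious, since the computation gives $-\Delta v_p^+=|1+v_p^+/p|^{p-1}(1+v_p^+/p)$ with the sign of $u_p(x_p^+)$ cancelling, and consequently there is no ``$\pm e^U$'' dichotomy to rule out ($1+v_p^+/p\to1$ locally uniformly, so the limit nonlinearity is $e^U$ outright); and the $C^{2,\alpha}_{loc}$ bound does not come from Schauder alone, because the equation only controls $\Delta v_p^+$ --- one first needs a uniform $L^\infty_{loc}$ bound on $v_p^+$, obtained from $v_p^+\leq0$, $v_p^+(0)=0$ and the Harnack inequality (this is the step the paper itself alludes to in the proof of Lemma \ref{lemma:possoRiscalareAttornoAiMax}-$(iv)$).

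The one genuine gap is the one you flag yourself: the claim $\widetilde\Omega_p^+\to\R^2$, i.e.\ $dist(x_p^+,\partial\Omega)/\mu_p^+\to+\infty$, is assumed rather than proved, and $(ii)$, $(iii)$ and the lower bound in $(iv)$ all lean on it (every rescaled integral over $B_R(0)$ requires $B_{R\mu_p^+}(x_p^+)\subset\Omega$). This cannot be waved away, but it is a known step with a standard closure: for positive solutions the moving-plane/Kelvin-transform argument of de Figueiredo--Lions--Nussbaum, used in \cite{RenWeiTAMS1994}, produces a fixed neighborhood of $\partial\Omega$ in which $u_p$ has no interior critical point, uniformly in $p$; hence $dist(x_p^+,\partial\Omega)\geq d_0>0$, which is much stronger than what is needed since $\mu_p^+\to0$. (For general sign-changing families the analogous statement is Remark \ref{rem:nonvedobordo}, proved in \cite{DeMarchisIanniPacellaJEMS} by a different, Pohozaev-type argument.) With that ingredient supplied, your chain of implications $(i)\Rightarrow(ii)\Rightarrow(iii)\Rightarrow(iv)\Rightarrow(v)$ is complete and matches the proofs in the cited sources.
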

\begin{proof}
The statements \emph{$(i)$} and \emph{$(iii)$} have been first proved for positive solutions in \cite{RenWeiTAMS1994}, while \emph{$(ii)$} is essentially proved in \cite{AdiGrossi} (see also \cite{DeMarchisIanniPacellaSurvey}). Assertion \emph{$(iv)$} follows easily from \emph{$(iii)$}, by H\"older inequality and \eqref{energylimitLower} and \eqref{energiaSuSoluzioni}. The proof of \emph{$(v)$} is given in \cite{DeMarchisIanniPacellaJEMS} or \cite{DeMarchisIanniPacellaSurvey}.
\end{proof}

\

\

We now recall an important result about the asymptotic behavior of solutions to \eqref{problem} satisfying \eqref{energylimit} which has been proved in \cite{DeMarchisIanniPacellaJEMS}. It is the starting point for the proof of Theorem \ref{teo:Positive}.

In order to state it (see Proposition \ref{thm:x1N} below) we need to introduce some notations. Given a family $(u_p)$ of solutions of \eqref{problem}
and assuming that there exists  $n\in\N\setminus\{0\}$ families of points $(\xip)$, $i=1,\ldots,n$  in $\Omega$ such that
\begin{equation}
\label{muVaAZero}
p|\upp(\xip)|^{p-1}\to+\infty\ \mbox{ as }\ p\to+\infty,
\end{equation}
we define the parameters $\mip$ by
\bel\label{mip}
\mip^{-2}=p |\upp(\xip)|^{p-1},\ \mbox{ for all }\ i=1,\ldots,n.
\eel
By \eqref{muVaAZero} it is clear that $\mip\to0$ as $p\to+\infty$ and that
\begin{equation}\label{RemarkMaxCirca1}
\liminf_{p\rightarrow +\infty}|u_p(\xip)|\geq 1.
\end{equation}
Then we define the concentration set
\bel\label{S}
\mathcal{S}=\left\{\lim_{p\to+\infty}\xip,\,i=1,\ldots,n\right\}\subset\bar\Omega
\eel
and the function
\bel\label{RNp}
R_{n,p}(x)=\min_{i=1,\ldots,n} |x-\xip|, \ \forall x\in\Omega.
\eel

\

Finally we introduce the following properties:

\

\begin{itemize}
\item[$(\mathcal{P}_1^n)$] For any $i,j\in\{1,\ldots,n\}$, $i\neq j$,
\[
\lim_{p\to+\infty}\fr{|\xip-\xjp|}{\mip}=+\infty.
\]
\item[$(\mathcal{P}_2^n)$] For any $i=1,\ldots,n$, for $x\in\widetilde{\Omega}_{i,p}:=\{x\in\R^2\,:\,x_{i,p}+\mu_{i,p}x\in\Omega\}$
\begin{equation}\label{vip}
v_{i,p}(x):=\fr{p}{\upp(\xip)}(\upp(\xip+\mip x)-\upp(\xip))\To U(x)
\end{equation}
in $C^2_{loc}(\R^2)$ as $p\to+\infty$, where $U$ is the same function in \eqref{v0}.
\item[$(\mathcal{P}_3^n)$] There exists $C>0$ such that
\[
p R_{n,p}(x)^2 |\upp(x)|^{p-1}\leq C
\]
for all $p>1$ and all $x\in \Omega$.
\item[$(\mathcal{P}_4^n)$] There exists $C>0$ such that
\[
p R_{n,p}(x) |\nabla \upp(x)|\leq C
\]
for all $p>1$ and all $x\in \Omega$.
\end{itemize}

\

\begin{lemma}\emph{(}\cite[Lemma 2.1 - \emph{$(iii)$}]{DeMarchisIanniPacellaJEMS}\emph{)}\label{lemma:BoundEnergiaBassino}
If there exists  $n\in\N\setminus\{0\}$ such that the properties $(\mathcal{P}_1^n)$ and $(\mathcal{P}_2^n)$ hold for families $(\xip)_{i=1,\ldots,n}$ of points satisfying \eqref{muVaAZero}, then
\[
p\int_\Omega |\na\upp|^2\,dx\geq8\pi\sum_{i=1}^n \alpha_i^2+o_p(1)\ \mbox{ as }p\rightarrow +\infty,
\]
where  $\alpha_i:=\liminf_{p\to+\infty}|\upp(\xip)|\ (\overset{\eqref{RemarkMaxCirca1}}{\geq} 1)$.

\end{lemma}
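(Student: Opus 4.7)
The plan is to convert the Dirichlet energy into an $L^{p+1}$ integral using the equation, then to localize around each bubble region where the integrand rescales to the Liouville density $e^U$ of total mass $8\pi$. No control will be needed outside the bubble regions, since $|u_p|^{p+1}\geq 0$ and only a lower bound is sought. Concretely, testing \eqref{problem} against $u_p$ gives $\int_\Omega|\nabla u_p|^2=\int_\Omega|u_p|^{p+1}$, so the target reduces to
\[
\liminf_{p\to+\infty}p\int_\Omega|u_p|^{p+1}\,dx\;\geq\;8\pi\sum_{i=1}^n\alpha_i^2.
\]

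Next, I will fix $R>0$ and observe that $(\mathcal{P}_1^n)$ (applied to both orderings of each pair $i\neq j$) guarantees that for $p$ large the balls $B_{R\mip}(\xip)$ are pairwise disjoint and contained in $\Omega$ (the latter because $\mip\to 0$). On each such ball I will perform the change of variable $y=\xip+\mip z$. From the definition \eqref{vip} one has $u_p(\xip+\mip z)=u_p(\xip)\bigl(1+v_{i,p}(z)/p\bigr)$, and combining with \eqref{mip} gives the key cancellation $p\mip^2|u_p(\xip)|^{p-1}=1$, whence
\[
p\int_{B_{R\mip}(\xip)}|u_p|^{p+1}\,dy\;=\;u_p(\xip)^2\int_{B_R(0)}\Bigl|1+\tfrac{v_{i,p}(z)}{p}\Bigr|^{p+1}\,dz.
\]

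Then the $C^2_{loc}$-convergence $v_{i,p}\to U$ supplied by $(\mathcal{P}_2^n)$ will give $\bigl|1+v_{i,p}/p\bigr|^{p+1}\to e^U$ uniformly on $B_R(0)$, and dominated convergence will turn the right-hand side into $u_p(\xip)^2\int_{B_R(0)}e^U\,dz$. Passing to the liminf (using $|u_p(\xip)|\to\alpha_i$ along a suitable subsequence), summing over $i=1,\ldots,n$ on the disjoint balls, and then letting $R\to+\infty$ to invoke $\int_{\R^2}e^U=8\pi$ from \eqref{v0}, will produce the claimed inequality with the $o_p(1)$ term encoded in the liminf.

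I expect the main technical point to be precisely the uniform limit $\bigl(1+v_{i,p}(z)/p\bigr)^{p+1}\to e^{U(z)}$ on $B_R(0)$: it requires both a uniform bound on $v_{i,p}$ over $B_R(0)$ (immediate from the $C^2_{loc}$ convergence, which also makes $1+v_{i,p}/p$ positive for large $p$) and a uniform-in-$z$ application of the expansion $(p+1)\log(1+w/p)=w+O(w^2/p)$. Once this is secured, the remainder of the argument is just a change of variables, positivity of $|u_p|^{p+1}$, and the monotone limit $\int_{B_R(0)}e^U\to 8\pi$ as $R\to+\infty$.
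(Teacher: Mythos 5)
The paper does not actually prove this lemma---it is imported verbatim from \cite[Lemma 2.1-(iii)]{DeMarchisIanniPacellaJEMS}---but your argument is correct and is precisely the standard one, mirroring the computations the paper itself carries out in Lemma \ref{lemma:sommeDiGreen} and Lemma \ref{lemma:possoRiscalareAttornoAiMax}-(v)--(vi): reduce to $p\int_\Omega|u_p|^{p+1}$, restrict to the pairwise disjoint balls $B_{R\mu_{i,p}}(x_{i,p})$, rescale using $p\,\mu_{i,p}^2\,|u_p(x_{i,p})|^{p-1}=1$, pass to the limit via $(\mathcal{P}_2^n)$, and let $R\to+\infty$. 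The only point stated too quickly is the inclusion $B_{R\mu_{i,p}}(x_{i,p})\subset\Omega$: the fact that $\mu_{i,p}\to0$ does not by itself give it, since $x_{i,p}$ could a priori approach $\partial\Omega$; it follows instead from $(\mathcal{P}_2^n)$, whose $C^2_{loc}(\R^2)$ convergence forces $\widetilde\Omega_{i,p}$ to exhaust $\R^2$, i.e.\ $\mathrm{dist}(x_{i,p},\partial\Omega)/\mu_{i,p}\to+\infty$ (cf.\ Remark \ref{rem:nonvedobordo})---or one can simply integrate over $B_{R\mu_{i,p}}(x_{i,p})\cap\Omega$ and conclude with Fatou's lemma, since only a lower bound is needed.
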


\

Next result shows that the solutions concentrate at a finite number of points and also establishes the existence of a maximal number of ``bubbles''

\begin{proposition}\emph{(}\cite[Proposition 2.2]{DeMarchisIanniPacellaJEMS}, \cite[Theorem 2.3]{DeMarchisIanniPacellaSurvey}\emph{)}\label{thm:x1N}
Let $(\upp)$ be a family of solutions to \eqref{problem} and assume that \eqref{energylimit} holds. Then there exist $k\in\N\setminus\{0\}$ and $k$ families of points $(\xip)$ in $\Omega$  $i=1,\ldots, k$ such that, after passing to a sequence, $(\mathcal{P}_1^k)$, $(\mathcal{P}_2^k)$, and $(\mathcal{P}_3^k)$ hold. Moreover $x_{1,p}=x_p^+$ and, given any family of points $x_{k+1,p}$, it is impossible to extract a new sequence from the previous one such that $(\mathcal{P}_1^{k+1})$, $(\mathcal{P}_2^{k+1})$, and $(\mathcal{P}_3^{k+1})$ hold with the sequences $(\xip)$, $i=1,\ldots,k+1$. At last, we have
\begin{equation}\label{pu_va_a_zero}
\sqrt{p}\upp\to 0\quad\textrm{ in $C^2_{loc}(\bar\Omega\setminus\mathcal{S})$ as $p\to+\infty$.}
\end{equation}
Moreover there exists $v\in C^2(\bar\Omega\setminus\mathcal{S})$ such that
\begin{equation}
\label{pu_va_a_funzione}
p\upp\to v\quad\textrm{ in $C^2_{loc}(\bar\Omega\setminus\mathcal{S})$ as $p\to+\infty$,}
\end{equation}
and $(\mathcal{P}_4^k)$ holds.
\end{proposition}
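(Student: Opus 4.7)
The plan is to build the families $(x_{i,p})$ by iterative bubble selection, bound the number of bubbles by combining Lemma \ref{lemma:BoundEnergiaBassino} with the energy assumption \eqref{energylimit}, and then pass to the limit in the Green's representation of $pu_p$. Set $x_{1,p}:=x_p^+$: Proposition \ref{prop:BoundEnergia}$(ii)$ gives $\mu_{1,p}\to 0$ and $v_{1,p}\to U$ in $C^2_{loc}(\R^2)$, so $(\mathcal{P}_1^1)$ (vacuously) and $(\mathcal{P}_2^1)$ hold. Inductively, suppose $n\ge 1$ families have been selected for which $(\mathcal{P}_1^n)$ and $(\mathcal{P}_2^n)$ hold. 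If $(\mathcal{P}_3^n)$ also holds, stop; otherwise pick $x_{n+1,p}$ as a near-maximizer of $pR_{n,p}(x)^2|u_p(x)|^{p-1}$ on $\Omega$, so that this quantity diverges at $x_{n+1,p}$. The divergence forces $R_{n,p}(x_{n+1,p})^2/\mu_{n+1,p}^2\to+\infty$, hence $|x_{n+1,p}-x_{i,p}|/\mu_{n+1,p}\to+\infty$ for every $i\le n$, yielding $(\mathcal{P}_1^{n+1})$; the near-maximality of the chosen point gives the uniform bound $|1+v_{n+1,p}/p|^{p-1}\le C$ on any compact subset of $\R^2$. Since $v_{n+1,p}(0)=0$ and $v_{n+1,p}$ solves
\[
-\Delta v_{n+1,p}=\left|1+\tfrac{v_{n+1,p}}{p}\right|^{p-1}\!\left(1+\tfrac{v_{n+1,p}}{p}\right),
\]
standard elliptic regularity together with the Chen--Li classification in $\R^2$ force the $C^2_{loc}$-limit to be the function $U$ of \eqref{v0}, giving $(\mathcal{P}_2^{n+1})$.

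By Lemma \ref{lemma:BoundEnergiaBassino}, whenever $(\mathcal{P}_1^n)$ and $(\mathcal{P}_2^n)$ hold we have $p\int_\Omega|\nabla u_p|^2\,dx\ge 8\pi n+o_p(1)$, so by \eqref{energylimit} the procedure must terminate at some $k\le\lfloor\beta/(8\pi)\rfloor$, at which stage $(\mathcal{P}_3^k)$ is in force. On any compact $K\subset\bar\Omega\setminus\mathcal S$ the function $R_{k,p}$ is bounded below by a positive constant, so $(\mathcal{P}_3^k)$ gives $p|u_p|^{p-1}\le C_K$ uniformly on $K$; together with \eqref{boundSoluzio} this makes $p|u_p|^{p-1}u_p$ bounded in $L^\infty(K)$, and via \eqref{boundEnergiap} also bounded in $L^1(\Omega)$. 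Writing the Green's representation
\[
p u_p(x)=\int_\Omega G(x,y)\,p|u_p(y)|^{p-1}u_p(y)\,dy
\]
and localizing around each $x_{i,p}$ through the rescaling \eqref{vip} and $\int_{\R^2}e^U=8\pi$, the integrand concentrates at $\mathcal S$ with mass $8\pi m_i$ near $x_i$, hence by elliptic regularity $pu_p\to v(x):=\sum_{i=1}^k 8\pi m_i G(x,x_i)$ in $C^2_{loc}(\bar\Omega\setminus\mathcal S)$; dividing by $\sqrt p$ yields \eqref{pu_va_a_zero}. Property $(\mathcal{P}_4^k)$ follows by differentiating the same representation and splitting at $\{|x-y|\lesssim R_{k,p}(x)\}$, using \eqref{boundDerivataG} with the $L^1$ mass bound on the near part and $(\mathcal{P}_3^k)$ on the far part.

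The principal difficulty lies in the inductive step, namely in verifying that the rescaling around each newly chosen $x_{n+1,p}$ indeed produces the canonical bubble $U$ and not a degenerate or singular Liouville profile. One needs simultaneously: (i) the non-triviality bound $|u_p(x_{n+1,p})|\ge 1-o_p(1)$, obtained from the blow-up of $pR_{n,p}^2|u_p|^{p-1}$ at $x_{n+1,p}$ combined with \eqref{boundSoluzio}; (ii) locally uniform upper bounds on $v_{n+1,p}$, depending crucially on the near-maximal choice of $x_{n+1,p}$; and (iii) the finite-mass condition inherited from \eqref{boundEnergiap} ruling out singular Liouville limits, which together with the Chen--Li classification pins the limit down to $U$. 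These three ingredients, in the style of the bubble-tree extraction of \cite{DeMarchisIanniPacellaJEMS}, close the induction and complete the proof.
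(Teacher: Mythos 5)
First, note that the paper does not actually prove Proposition \ref{thm:x1N}: it is imported verbatim from \cite[Proposition 2.2]{DeMarchisIanniPacellaJEMS}, so there is no internal proof to match against. Your extraction scheme (start from $x_p^+$, add a near-maximizer of $pR_{n,p}^2|u_p|^{p-1}$ whenever $(\mathcal{P}_3^n)$ fails, obtain the new bubble by rescaling and classification, terminate via Lemma \ref{lemma:BoundEnergiaBassino} together with \eqref{energylimit}, then read off the $C^2_{loc}$ statements and $(\mathcal{P}_4^k)$ from the Green representation) is exactly the strategy of that reference, and most of the steps you sketch are sound.

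There is, however, one genuine overreach. In deriving \eqref{pu_va_a_funzione} you identify the limit as $v=\sum_{i}8\pi m_iG(\cdot,x_i)$, claiming that the rescaling \eqref{vip} shows the local mass near $x_i$ equals $8\pi m_i$. At this stage that is not available: the convergence $v_{i,p}\to U$ controls $u_p$ only at distance $O(\mu_{i,p})$ from $x_{i,p}$ and hence yields, via Fatou, only the lower bound $\liminf_p p\int_{B_\delta(x_i)}|u_p|^{p-1}u_p\,dx\geq 8\pi\alpha_i$; mass may hide between the scales $\mu_{i,p}$ and $\delta$. Equality is precisely the content of Lemma \ref{lemma:sommeDiGreen}, Proposition \ref{PropositionBetaConvergente} and \eqref{valoreGamma}, which require positivity, the exclusion of boundary concentration (Section \ref{section:noconcentrationboundary}) and a Pohozaev argument. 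Moreover Proposition \ref{thm:x1N} is stated for possibly sign-changing solutions, for which your formula is false in general (bubbles of opposite sign can accumulate at one point, cf. the bubble-tower discussion in the introduction). The proposition only asserts the existence of some $v$; this follows from the $L^1$ bound \eqref{boundEnergiap} on $p|u_p|^{p-1}u_p$, weak-$*$ compactness of the associated measures, and local elliptic estimates away from $\mathcal S$ --- do not attempt to compute $v$ here. Two smaller points: $(\mathcal{P}_1^{n+1})$ also requires $|x_{i,p}-x_{n+1,p}|/\mu_{i,p}\to+\infty$ for $i\leq n$, i.e.\ separation measured in the \emph{old} scales, which does not follow from $R_{n,p}(x_{n+1,p})/\mu_{n+1,p}\to+\infty$ alone and needs the extra observation that a bounded ratio would place $x_{n+1,p}$ inside the $i$-th bubble, where $pR_{n,p}^2|u_p|^{p-1}$ is already controlled; and in your argument for $(\mathcal{P}_4^k)$ the roles of the two bounds are swapped --- the pointwise bound from $(\mathcal{P}_3^k)$ is what must be integrated against the singularity of $\nabla_xG$ on the near region $|x-y|\leq R_{k,p}(x)/2$, while the $L^1$ mass bound handles the far region where $|\nabla_xG|\leq C R_{k,p}(x)^{-1}$.
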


\

In the rest of this section we derive some consequences of Proposition \ref{thm:x1N}.

\

\begin{remark}\label{rem:nonvedobordo}
Under the assumptions of Proposition \ref{thm:x1N} we have
$$
\fr{dist(\xip,\partial\Omega)}{\mip}\underset{p\to+\infty}{\longrightarrow}+\infty\qquad\textrm{for all $i\in\{1,\ldots,k\}$}.
$$
\end{remark}

\

\begin{corollary}
Let $K\subset\bar\Omega\setminus\mathcal{S}$ be a compact set. Then
\begin{equation}\label{pupVaAZeroUnifSuiCompatti}
\lim_{p\rightarrow +\infty} \|p|u_p(x)|^{p}\|_{L^{\infty}(K)}=\lim_p \|p|u_p|^{p+1}\|_{L^{\infty}(K)}=0
\end{equation}
and so
\begin{equation}
\label{integraleVaAZeroSuCompatti}
\lim_{p\rightarrow +\infty} p\int_K |u_p(x)|^p dx= \lim_{p\rightarrow +\infty} p\int_K |u_p(x)|^{p+1} dx=0.
\end{equation}
Moreover
\begin{equation}\label{pGradupVaAZeroUnifSuiCompatti}
\lim_{p\rightarrow +\infty} \|p|\nabla u_p(x)|^{2}\|_{L^{\infty}(K)}=0
\end{equation}
and so
\begin{equation}
\label{integraleGradVaAZeroSuCompatti}
\lim_{p\rightarrow +\infty} p\int_K |\nabla u_p(x)|^2 dx=0.
\end{equation}
\end{corollary}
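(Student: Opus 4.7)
The plan is to combine two ingredients from Proposition~\ref{thm:x1N}: the pointwise bounds $(\mathcal{P}_3^k)$, $(\mathcal{P}_4^k)$ on one hand, and the $C^2_{\mathrm{loc}}$ convergences \eqref{pu_va_a_zero}, \eqref{pu_va_a_funzione} on the other. The starting observation is that if $K\subset\bar\Omega\setminus\mathcal{S}$ is compact then $d(K,\mathcal{S})>0$, and since $x_{i,p}\to x_i\in\mathcal{S}$, there exist $\delta>0$ and $p_0$ such that the quantity $R_{k,p}(x)=\min_{i}|x-x_{i,p}|$ is bounded below by $\delta$ on $K$ for every $p\geq p_0$.

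With this observation in hand I would first address the $L^\infty$ estimates on $u_p$. Factoring $p|u_p|^p=(p|u_p|^{p-1})\cdot |u_p|$, the first factor is bounded by $C/\delta^2$ on $K$ by $(\mathcal{P}_3^k)$, while the second is $o(p^{-1/2})$ uniformly on $K$ by \eqref{pu_va_a_zero}; the product gives $\|p|u_p|^p\|_{L^\infty(K)}=o(p^{-1/2})$. The decomposition $p|u_p|^{p+1}=(p|u_p|^{p-1})\cdot |u_p|^2$ improves the decay to $o(p^{-1})$, which proves \eqref{pupVaAZeroUnifSuiCompatti}. For the gradient, \eqref{pu_va_a_funzione} forces $p\nabla u_p$ to be uniformly bounded on $K$ by some constant $M$ (alternatively one may invoke $(\mathcal{P}_4^k)$), and the identity $p|\nabla u_p|^2=(p|\nabla u_p|)^2/p\leq M^2/p$ immediately yields \eqref{pGradupVaAZeroUnifSuiCompatti}.

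The two integral identities \eqref{integraleVaAZeroSuCompatti} and \eqref{integraleGradVaAZeroSuCompatti} then follow by multiplying the $L^\infty$ decays just obtained by the finite Lebesgue measure $|K|$. The argument presents no real obstacle: it is a routine repackaging of the quantitative estimates already contained in Proposition~\ref{thm:x1N}, combined with the mild geometric remark that $R_{k,p}$ stays bounded below on any compact set avoiding $\mathcal{S}$.
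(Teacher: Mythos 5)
Your proposal is correct and follows essentially the same route as the paper: both use $(\mathcal{P}_3^k)$ (with $R_{k,p}$ bounded below on $K$) to control $p|u_p|^{p-1}$, factor out $|u_p|$ which tends to zero uniformly by \eqref{pu_va_a_zero}, and handle the gradient via $(\mathcal{P}_4^k)$. The only cosmetic difference is that you write $p|u_p|^{p+1}=(p|u_p|^{p-1})|u_p|^2$ where the paper bounds $p|u_p|^{p+1}\leq\|u_p\|_\infty\, p|u_p|^p$ using the uniform $L^\infty$ bound; both are valid.
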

\begin{proof}
If $K$ is a compact subset of $\bar\Omega\setminus\mathcal{S}$ by $(\mathcal{P}_3^k)$ we have that there exists $C_K>0$ such that
\begin{equation} \label{PkinCompatto}
p|u_p(x)|^{p-1}\leq C_K, \ \mbox{ for all }x\in K.
\end{equation}
As a consequence for $x\in K$
\begin{equation}\label{pu^pVaAZeroUnifSuCompatti}
p|u_p(x)|^{p+1}\leq \|u_p\|_{\infty} p|u_p(x)|^{p}\overset{\mbox{\scriptsize Proposition \ref{prop:BoundEnergia}-(iii)}}{\leq} C\, p|u_p(x)|^{p}  \overset{\eqref{PkinCompatto}}{\leq} C_K\,u_p(x)      \rightarrow 0
\end{equation} uniformly as $p\rightarrow +\infty$ by \eqref{pu_va_a_zero}.

\

The proof of \eqref{pGradupVaAZeroUnifSuiCompatti} follows similarly by using $(\mathcal{P}_4^k)$ instead of $(\mathcal{P}_3^k)$.
\end{proof}

\

\

For a family of points $(x_p)_p\subset\Omega$ we denote by $\mu(x_p)$ the numbers defined by
\begin{equation}
\label{defmup}
\left[\mu (x_p)\right]^{-2}:=p |u_p(x_p)|^{p-1}.
\end{equation}

\begin{proposition}\emph{(}\cite[Proposition 2.5]{DeMarchisIanniPacellaJEMS}\emph{)}\label{lemma:rapportoMuBounded} Let $(x_p)_p\subset\Omega$ be a family of points such that  $p |u_p(x_p)|^{p-1}\rightarrow +\infty$ and let $\mu (x_p)$ be as in \eqref{defmup}.
Let $i\in\{1,\ldots,k\}$ such that $R_{k,p}(x_p)=|x_{i,p}-x_p|$, up to a sequence, then
\[
\limsup_{p\rightarrow +\infty}\frac{\mu_{i,p}}{\mu (x_p)}\leq  1.
\]
\end{proposition}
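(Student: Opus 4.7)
The plan is to argue by contradiction and exploit the bubble profile from $(\mathcal{P}_2^k)$. Suppose, along a subsequence, $\mu_{i,p}/\mu(x_p)\to L$ with $L\in(1,+\infty]$; I aim to derive $L\le 1$ and reach a contradiction.

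The first step is to show that the rescaled points $y_p:=(x_p-x_{i,p})/\mu_{i,p}$ remain bounded. Since by assumption $R_{k,p}(x_p)=|x_p-x_{i,p}|$, property $(\mathcal{P}_3^k)$ yields
\[
\left(\frac{|x_p-x_{i,p}|}{\mu(x_p)}\right)^2 \;=\; p\,R_{k,p}(x_p)^2\,|u_p(x_p)|^{p-1}\;\le\; C,
\]
so $|y_p|\le \sqrt{C}\,\mu(x_p)/\mu_{i,p}$, whose right-hand side tends to $\sqrt{C}/L\in[0,\sqrt{C})$. Up to a further extraction, $y_p\to y_\infty\in\R^2$; in the degenerate case $L=+\infty$ this forces $y_\infty=0$.

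The second step combines $(\mathcal{P}_2^k)$ with the definitions of the scaling parameters. Since $v_{i,p}\to U$ in $C^2_{loc}(\R^2)$ and $y_p\to y_\infty$,
\[
v_{i,p}(y_p)=\frac{p}{u_p(x_{i,p})}\bigl(u_p(x_p)-u_p(x_{i,p})\bigr)\longrightarrow U(y_\infty).
\]
Because $v_{i,p}(y_p)$ stays bounded while $|u_p(x_{i,p})|$ is bounded away from zero by \eqref{RemarkMaxCirca1}, the identity $u_p(x_p)/u_p(x_{i,p})=1+v_{i,p}(y_p)/p$ shows that for $p$ large the two values have the same sign, so
\[
\left(\frac{\mu_{i,p}}{\mu(x_p)}\right)^2 \;=\; \left(\frac{|u_p(x_p)|}{|u_p(x_{i,p})|}\right)^{p-1}\;=\; \left(1+\frac{v_{i,p}(y_p)}{p}\right)^{p-1}\;\longrightarrow\; e^{U(y_\infty)}\;\le\;1,
\]
the last inequality using $U\le 0$ on $\R^2$. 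This forces $L^2\le 1$, contradicting $L>1$.

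The step I expect to require the most care is the passage to the limit in the last display: one must verify that $u_p(x_p)$ and $u_p(x_{i,p})$ share the same sign before taking $(p-1)$-th powers of absolute values, and one must handle the degenerate case $L=+\infty$, which reduces to $y_\infty=0$ and hence $e^{U(y_\infty)}=1$. Everything else is routine bookkeeping of the two definitions $\mu_{i,p}^{-2}=p|u_p(x_{i,p})|^{p-1}$ and $\mu(x_p)^{-2}=p|u_p(x_p)|^{p-1}$.
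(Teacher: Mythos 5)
Your argument is correct and is essentially the same as the one in the cited source (the paper itself only quotes \cite[Proposition 2.5]{DeMarchisIanniPacellaJEMS} without reproducing the proof): property $(\mathcal{P}_3^k)$ forces the rescaled point $(x_p-x_{i,p})/\mu_{i,p}$ to stay bounded once the ratio is assumed to exceed $1$, and $(\mathcal{P}_2^k)$ then identifies the limit of $(\mu_{i,p}/\mu(x_p))^2$ as $e^{U(y_\infty)}\leq 1$, yielding the contradiction. The two points you flag as delicate (the common sign of $u_p(x_p)$ and $u_p(x_{i,p})$ for large $p$, and the case $L=+\infty$ reducing to $y_\infty=0$) are handled correctly.
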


\

\

Next result characterizes in different ways the concentration set $\mathcal S$.

\begin{proposition}\emph{(}\cite[Proposition 2.9]{DeMarchisIanniPacellaSurvey}\emph{)}\label{prop:stime gradiente}%\label{propozition: caratterizzazioneS}
Let $(u_p)$ be a family of solutions to \eqref{problem} satisfying \eqref{energylimit}. Then the following holds:
\begin{itemize}
\item[\emph{$(i)$}]
\[\mathcal S=\left\{x\in\overline\Omega\, :\, \forall\, r_0>0,\  \forall\, p_0>1,\ \exists\, p>p_0\  \mbox{ s.t. }\ p\int_{B_{r_0}(x)\cap\Omega} |u_p(x)|^{p+1}\ dx\geq 1\right\};\]
\item[\emph{$(ii)$}]
\[
\mathcal{S}=\left\{x\in\overline\Omega\, :\,
\begin{array}{l}
\exists\, \mbox{a sequence $(u_{p_n})\subset(u_p)$ and a sequence of points $x_{p_n}\to x$,}\\
\mbox{s.t. $p_n|u_{p_n}(x_{p_n})|\to+\infty$ as $p_n\to+\infty$}
\end{array}
\right\}.
\]
\end{itemize}
\end{proposition}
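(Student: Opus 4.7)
The plan is to establish two set equalities $\mathcal{S} = \{(i)\}$ and $\mathcal{S} = \{(ii)\}$, and in each case to prove the two inclusions separately, heavily relying on Proposition \ref{thm:x1N} together with the corollary that packages its consequences (in particular \eqref{pu_va_a_zero}, \eqref{pu_va_a_funzione}, and \eqref{integraleVaAZeroSuCompatti}).

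For $(i)$, the direction $\mathcal{S}\subseteq\{(i)\}$ is where the work happens. Fix $x_i\in\mathcal{S}$ and a small $r_0>0$. Since $x_{i,p}\to x_i$, for $p$ large we have $B_{r_0/2}(x_{i,p})\subset B_{r_0}(x_i)$, and by Remark \ref{rem:nonvedobordo} together with $\mu_{i,p}\to 0$, the rescaled set $\{y\in\R^2:x_{i,p}+\mu_{i,p}y\in B_{r_0/2}(x_{i,p})\cap\Omega\}$ exhausts $\R^2$ as $p\to+\infty$. Changing variable $y=(x-x_{i,p})/\mu_{i,p}$ and using the definition \eqref{mip} of $\mu_{i,p}$ and the definition \eqref{vip} of $v_{i,p}$ one gets
\[
p\int_{B_{r_0}(x_i)\cap\Omega}|u_p|^{p+1}\,dx \;\geq\; |u_p(x_{i,p})|^{2}\int_{\{x_{i,p}+\mu_{i,p}y\in B_{r_0/2}(x_{i,p})\cap\Omega\}}\left(1+\tfrac{v_{i,p}(y)}{p}\right)^{p+1}dy.
\]
By $(\mathcal{P}_2^k)$, $v_{i,p}\to U\le 0$ in $C^2_{loc}(\R^2)$, so $(1+v_{i,p}/p)^{p+1}\to e^{U}$ pointwise. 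Fatou's lemma and \eqref{RemarkMaxCirca1} then give a lower limit of $\alpha_i^2\int_{\R^2}e^{U}\,dy = 8\pi\alpha_i^2 \ge 8\pi$, which exceeds $1$ for all $p$ sufficiently large. Conversely, if $x\notin\mathcal{S}$, then since $\mathcal{S}$ is finite we may choose $r_0>0$ small enough that $K:=\overline{B_{r_0}(x)\cap\Omega}$ is a compact subset of $\bar\Omega\setminus\mathcal{S}$; then \eqref{integraleVaAZeroSuCompatti} yields $p\int_K|u_p|^{p+1}dx\to 0$, so $x$ fails property $(i)$.

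For $(ii)$, the inclusion $\mathcal{S}\subseteq\{(ii)\}$ is immediate: taking $x_{p_n}=x_{i,p_n}$ we have $x_{p_n}\to x_i$ and by \eqref{RemarkMaxCirca1} $|u_p(x_{i,p})|\ge \tfrac12$ eventually, so $p_n|u_{p_n}(x_{p_n})|\to+\infty$. For the reverse inclusion, suppose by contradiction that $x\in\{(ii)\}\setminus\mathcal{S}$, with witnesses $x_{p_n}\to x$. Since $\mathcal{S}$ is finite we may find a closed neighborhood $V$ of $x$ in $\bar\Omega$ disjoint from $\mathcal{S}$; by \eqref{pu_va_a_funzione}, $pu_p$ converges uniformly on $V$ to the continuous function $v$, and hence $\{pu_p\}$ is uniformly bounded on $V$. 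For $n$ large, $x_{p_n}\in V$, so $p_n|u_{p_n}(x_{p_n})|$ is bounded — contradiction.

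The main obstacle is the scaling estimate in $\mathcal{S}\subseteq\{(i)\}$: one has to keep track of the shifts $x_{i,p}$ versus the limit point $x_i$, of the constraint that $B_{r_0/\mu_{i,p}}(0)$ eventually fits inside the rescaled domain (handled by Remark \ref{rem:nonvedobordo}), and of the passage to the limit in $(1+v_{i,p}/p)^{p+1}$, which is why Fatou's lemma (rather than dominated convergence) is the clean tool. Everything else reduces to finiteness of $\mathcal{S}$ and the two $C^2_{loc}$-convergences \eqref{pu_va_a_zero}–\eqref{pu_va_a_funzione} already recorded in Proposition \ref{thm:x1N}.
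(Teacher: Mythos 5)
Your proof is correct. Note, however, that the paper itself does not prove this proposition: it is quoted directly from \cite[Proposition 2.9]{DeMarchisIanniPacellaSurvey}, so there is no in-paper argument to compare against. Your reconstruction uses precisely the ingredients the paper makes available -- the rescaling $(\mathcal{P}_2^k)$ together with Fatou's lemma and \eqref{RemarkMaxCirca1} to get the lower bound $8\pi\alpha_i^2>1$ near each $x_i$, the limit \eqref{integraleVaAZeroSuCompatti} (ultimately $(\mathcal{P}_3^k)$) away from $\mathcal{S}$, and the uniform convergence \eqref{pu_va_a_funzione} on compacta for part $(ii)$ -- and this is the natural route, presumably the one taken in the cited reference. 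Two minor points. First, since the proposition is stated for possibly sign-changing solutions, the integrand in your rescaled integral should carry absolute values, i.e.\ $\left|1+\tfrac{v_{i,p}(y)}{p}\right|^{p+1}$; the Fatou argument is unaffected because for each fixed $y$ this quantity still tends to $e^{U(y)}$. Second, in the reverse inclusion of $(ii)$, the convergence \eqref{pu_va_a_funzione} holds only along the subsequence fixed in Proposition \ref{thm:x1N}; this is consistent with the paper's standing convention that $(u_p)$ already denotes that subsequence, but is worth stating explicitly since the witnesses $x_{p_n}$ come with their own sequence of exponents.
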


\

\

\

\section{No concentration at the boundary}\label{section:noconcentrationboundary}

\

Let $k\in\N\setminus\{0\}$ be as in  Proposition \ref{thm:x1N} the maximal number of families of points $(\xip)\subset \Omega$,  $i=1,\ldots, k$ which up to a sequence satisfy $(\mathcal{P}_1^k)$, $(\mathcal{P}_2^k)$, and $(\mathcal{P}_3^k)$. In \eqref{S} we have also defined
\[\mathcal S=\left\{\lim_{p\rightarrow +\infty}\xip, \ i=1,\ldots,k  \right\}\subset\overline\Omega,\]
for which the characterization in Proposition \ref{prop:stime gradiente} holds.

We denote by $N\in \N\setminus\{0\}$ the number of points in $\mathcal{S}$. Hence $N\leq k$, moreover w.l.g. we can relabel the sequences of points $x_{i,p}$, $i=1,\ldots, k$ and assume that
\begin{equation}\label{ribattezzo}
x_{j,p}\rightarrow x_j,\ \forall j=1,\ldots, N\quad\mbox{ and }\quad\mathcal{S}=\{ x_1, \ x_2, \ \ldots, \ x_N \}
\end{equation}

\

\

\begin{lemma}\label{lemma:sommeDiGreen}
There exists $\gamma_j>0$, $j=1,\ldots, N$ such that
\[\lim_{p\rightarrow +\infty}pu_p=\sum_{j=1}^N\ \gamma_j G(\cdot, x_j)\quad\mbox{ in }\ C^2_{loc}(\bar\Omega\setminus\mathcal S).\]
Moreover
\begin{equation}
\label{defgammaj}
\gamma_j=\lim_{\delta\rightarrow 0}\lim_{p\rightarrow +\infty}p\int_{B_{\delta}(x_j)\cap\Omega}u_p(x)^p\, dx,
\end{equation}
where $B_\delta(x_j)$ is a ball of center at $x_j$ and radius $\delta>0$.
\end{lemma}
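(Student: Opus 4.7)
The plan is to identify the limit function $v$ provided by \eqref{pu_va_a_funzione} via the Green's representation formula. Since $-\Delta u_p=u_p^p$ in $\Omega$ and $u_p\big|_{\partial\Omega}=0$, for every $x\in\Omega$
\[
pu_p(x)=\int_\Omega G(x,y)\,pu_p(y)^p\,dy,
\]
so the task reduces to analyzing the behavior of the measure $pu_p(y)^p\,dy$ on $\Omega$.

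Fix $x\in\bar\Omega\setminus\mathcal{S}$ and choose $\delta_0>0$ so small that the balls $\overline{B_{\delta_0}(x_j)}$, $j=1,\ldots,N$, are pairwise disjoint and $x\notin\bigcup_j\overline{B_{\delta_0}(x_j)}$. For $\delta\in(0,\delta_0)$ I would split
\[
pu_p(x)=\sum_{j=1}^N\int_{B_\delta(x_j)\cap\Omega}\!\! G(x,y)\,pu_p(y)^p\,dy+\int_{\Omega\setminus\bigcup_j B_\delta(x_j)}\!\! G(x,y)\,pu_p(y)^p\,dy.
\]
To handle the second integral, I would further isolate a ball $B_\delta(x)$ around the (integrable, logarithmic) singularity of $G(x,\cdot)$: on $B_\delta(x)$ the bound $pu_p^p\le C$ (coming from $(\mathcal{P}_3^k)$ together with Proposition \ref{prop:BoundEnergia}(iii)) and the integrability of $\log|x-y|$ give a contribution of size $O(\delta^2|\log\delta|)$ uniformly in $p$, while on the complementary set, which is a compact subset of $\bar\Omega\setminus\mathcal{S}$, \eqref{pupVaAZeroUnifSuiCompatti} forces the integrand to vanish uniformly as $p\to+\infty$. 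Hence this second integral is $o_p(1)+o_\delta(1)$.

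For each integral localised at $x_j$, the smoothness of $y\mapsto G(x,y)$ on $B_\delta(x_j)$ (which does not contain $x$) yields a modulus of continuity $\omega_x(\delta)\to 0$. Writing $G(x,y)=G(x,x_j)+\bigl[G(x,y)-G(x,x_j)\bigr]$ and using Proposition \ref{prop:BoundEnergia}(iv) to bound $\gamma_{j,p}^\delta:=p\int_{B_\delta(x_j)\cap\Omega}u_p^p\,dy$ uniformly in $p$, I obtain
\[
\int_{B_\delta(x_j)\cap\Omega}\!\! G(x,y)\,pu_p(y)^p\,dy=G(x,x_j)\,\gamma_{j,p}^\delta+O(\omega_x(\delta)).
\]
Extracting a subsequence along which $\gamma_{j,p}^\delta$ converges to some $\gamma_j^\delta$ for each $\delta$ in a countable dense subset of $(0,\delta_0)$, and noting that $\delta\mapsto\gamma_j^\delta$ is nondecreasing and bounded, the limit $\gamma_j:=\lim_{\delta\to 0^+}\gamma_j^\delta$ exists and is finite. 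Passing $p\to+\infty$ and then $\delta\to 0^+$ in the Green representation yields $v(x)=\sum_{j=1}^N\gamma_j\,G(x,x_j)$ on $\bar\Omega\setminus\mathcal{S}$. Because $v$ is intrinsic to $(u_p)$ and the $N$ functions $G(\cdot,x_j)$ are linearly independent (their singular loci being distinct), the coefficients $\gamma_j$ are uniquely determined; no subsequence extraction is ultimately needed and the formula \eqref{defgammaj} follows.

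It remains to establish $\gamma_j>0$. I would exploit the bubble at $x_j$ provided by $(\mathcal{P}_2^k)$: for any index $i$ with $x_{i,p}\to x_j$ and any $R>0$, Remark \ref{rem:nonvedobordo} ensures $B_{R\mu_{i,p}}(x_{i,p})\subset B_\delta(x_j)\cap\Omega$ for $p$ large, and the change of variables $y=x_{i,p}+\mu_{i,p}z$, combined with $\mu_{i,p}^{-2}=pu_p(x_{i,p})^{p-1}$, gives
\[
\gamma_{j,p}^\delta\ge p\!\int_{B_{R\mu_{i,p}}(x_{i,p})}\!\!u_p^p\,dy=\int_{B_R(0)}u_p(x_{i,p})\bigl(1+v_{i,p}(z)/p\bigr)^p\,dz.
\]
By $(\mathcal{P}_2^k)$, $(1+v_{i,p}/p)^p\to e^{U}$ locally uniformly in $\R^2$, and $\liminf_p u_p(x_{i,p})\ge 1$ by \eqref{RemarkMaxCirca1}, so the right-hand side is bounded below by $\int_{B_R(0)}e^U\,dz+o_p(1)$; letting $R\to+\infty$ and using $\int_{\R^2}e^U=8\pi$ yields $\gamma_j\ge 8\pi>0$. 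The hard part will be the careful bookkeeping required to handle simultaneously the logarithmic singularity of $G(x,\cdot)$ at $x$ and the double limit $p\to+\infty$, $\delta\to 0^+$; once those issues are properly organised, the identification of $v$ and the positivity of $\gamma_j$ follow cleanly.
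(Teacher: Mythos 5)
Your proposal is correct and follows essentially the same route as the paper: Green's representation, splitting the integral into small balls around the $x_j$'s plus a remainder killed by \eqref{pupVaAZeroUnifSuiCompatti}, continuity of $G(x,\cdot)$ together with Proposition \ref{prop:BoundEnergia}-(iv) to extract the coefficients $\gamma_j$, and a rescaling via $(\mathcal{P}_2^k)$ with Fatou to get $\gamma_j\geq 8\pi>0$. Your treatment is in fact slightly more careful than the paper's on two minor points the paper glosses over (the logarithmic singularity of $G(x,\cdot)$ at $y=x$ in the remainder term, and the existence of the double limit defining $\gamma_j$ via monotonicity in $\delta$), but the argument is the same.
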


\begin{proof}
Since the $x_j$'s are isolated, there exists $r>0$ such that $B_r(x_i)\cap B_r(x_j)=\emptyset$. Let $\delta\in (0,r)$, then by the Green representation formula
\begin{eqnarray*}
p\upp(x)&=& p\int_{\Omega}G(x,y)\upp(y)^{p}\,dy\\
&=& p\sum_{j=1}^N\int_{B_{\delta}(x_j)\cap\Omega}G(x,y)\upp(y)^{p}\,dy+\,p\int_{\Omega\setminus\cup_{j} B_{\delta}(x_j)}G(x,y)\upp(y)^{p}\,dy\\
&\overset{\eqref{GMaggZero}-\eqref{GlimitataFuoriDaPalla}-\eqref{integraleVaAZeroSuCompatti}}{=}&
p\sum_{j=1}^N\int_{B_{\delta}(x_j)\cap\Omega}G(x,y)\upp(y)^{p}\,dy +
o_p(1),
\end{eqnarray*}
Furthermore by the continuity of $G(x,\cdot)$ in $\bar\Omega	\setminus\{x\}$ and by Proposition \ref{prop:BoundEnergia}-\emph{$(iv)$} we obtain
\[
\lim_{p\to+\infty}p\upp(x)=\sum_{j=1}^N\gamma_j G(x,x_j),
\]
where
\[\gamma_j=\lim_{\delta\rightarrow 0}\lim_{p\rightarrow +\infty}p\int_{B_{\delta}(x_j)\cap\Omega}u_p(x)^{p}\, dx.\]

Last we show that $\gamma_j>0$.\\
Since $x_{j,p}\rightarrow x_j$ as $p\rightarrow +\infty$ then $B_{\frac{\delta}{2}}(x_{j,p})\subset B_{\delta}(x_{j})$  for $p$ large  and
so, since $u_p >0$
\[p\int_{B_{\delta}(x_j)\cap\Omega}u_p(x)^{p}\, dx\geq
p\int_{B_{\frac{\delta}{2}}(x_{j,p})\cap\Omega}u_p(x)^{p}\, dx= u_p(x_{j,p})\int_{B_{\frac{\delta}{2\mjp}}(0)\cap\widetilde\Omega_{j,p}} \left(1+\frac{v_{j,p}(x)}{p}\right)^{p}\, dx,\]
where the last equality is obtained by scaling around $\xjp$, where $v_{j,p}$ are defined in \eqref{vip} and $\widetilde\Omega_{j,p}=\{x\in\R^2\,:\,x_{i,p}+\mu_{i,p}x\in\Omega\}$.
Passing to the limit as $p\rightarrow +\infty$, since $B_{\frac{\delta}{2\mjp}}(0)\cap\widetilde\Omega_{j,p}\rightarrow \R^2$ and  $(\mathcal{P}_2^N)$ holds, by  Fatou's Lemma we get
\[\lim_p p\int_{B_{\delta}(x_j)\cap\Omega}u_p(x)^{p}\, dx\geq \liminf_p u_p(x_{j,p})\int_{\R^2}  e^{U(x)}\, dx\geq 8\pi>0,\]
having used  that $\liminf_p u_p(x_{j,p})\geq 1$ (see \eqref{RemarkMaxCirca1}).
\end{proof}

\

\

Next we show that there is no boundary blow-up
\begin{proposition}\label{proposition:noBoundary}
\[\mathcal S\cap \partial\Omega=\emptyset.\]
\end{proposition}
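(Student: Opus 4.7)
The plan is to argue by contradiction via a localized Pohozaev identity centered at the alleged boundary concentration point. Suppose $x_{j_0}\in\mathcal S\cap\partial\Omega$ and fix $\delta>0$ small enough that $\overline{B_\delta(x_{j_0})}\cap\mathcal S=\{x_{j_0}\}$. Apply the Pohozaev identity \eqref{Pohozaev} on $A_\delta:=B_\delta(x_{j_0})\cap\Omega$ with $f(u)=u^p$ and $y=x_{j_0}$ (the two corners of $A_\delta$ on $\partial B_\delta\cap\partial\Omega$ are handled by the usual smooth approximation), and then multiply the identity by $p^2$. The plan is to show that the rescaled left-hand side stays bounded below by a strictly positive constant, while the rescaled right-hand side vanishes in the iterated limit $p\to\infty$, $\delta\to 0$.

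For the left-hand side, set $m_{j_0}:=\liminf_p u_p(x_{j_0,p})\ge 1$ by \eqref{RemarkMaxCirca1} and rescale via $(\mathcal P_2^k)$: the same computation as in the proof of Lemma~\ref{lemma:sommeDiGreen} yields
\[
p\int_{A_\delta}u_p^{p+1}\,dx=u_p(x_{j_0,p})^2\int_{\widetilde A_\delta}\!\Big(1+\tfrac{v_{j_0,p}}{p}\Big)^{\!p+1}\,dz,
\]
with $\widetilde A_\delta\nearrow\R^2$ thanks to Remark~\ref{rem:nonvedobordo}. Fatou together with the convergence $v_{j_0,p}\to U$ in $C^2_{loc}(\R^2)$ gives $\liminf_p p\int_{A_\delta}u_p^{p+1}\,dx\ge m_{j_0}^2\int_{\R^2}e^{U}\,dz=8\pi m_{j_0}^2$, and hence $\liminf_p \tfrac{2p^2}{p+1}\int_{A_\delta}u_p^{p+1}\,dx\ge 16\pi m_{j_0}^2>0$ for every sufficiently small $\delta$.

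For the right-hand side, split $\partial A_\delta=(\partial B_\delta\cap\Omega)\cup(\partial\Omega\cap\overline{B_\delta})$. On the first piece, which is a compact subset of $\overline\Omega\setminus\mathcal S$, Lemma~\ref{lemma:sommeDiGreen} gives $pu_p\to v=\sum_j\gamma_jG(\cdot,x_j)$ in $C^2$; moreover, since $x_{j_0}\in\partial\Omega$, the symmetry of $H$ together with the boundary condition in \eqref{eqH} forces $G(\cdot,x_{j_0})\equiv 0$ in $\Omega$, so $v$ in fact extends smoothly across $x_{j_0}$ and $|\nabla v|$ is bounded there. Since $\langle x-x_{j_0},\nu\rangle=\delta$ and the arc has length $O(\delta)$, the total contribution from this piece is $O(\delta^2)$. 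On the second piece, $u_p\equiv 0$ on $\partial\Omega$ collapses the three boundary terms to $\tfrac12\int(p\partial_\nu u_p)^2\langle x-x_{j_0},\nu(x)\rangle\,ds$; the smoothness of $\partial\Omega$ yields $|\langle x-x_{j_0},\nu(x)\rangle|\le C|x-x_{j_0}|^2$, and differentiating the Green representation $pu_p(x)=\int_\Omega G(x,y)(pu_p^p)(y)\,dy$ in $\nu_x$, combined with \eqref{GreenBoundedAtBoundary} and the bound $p\int_\Omega u_p^p\,dy\le C$ from Proposition~\ref{prop:BoundEnergia}\emph{$(iv)$}, delivers the uniform estimate $|p\partial_\nu u_p(x)|\le C$ on all of $\partial\Omega$. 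Hence this piece is bounded by $C\delta^3$ uniformly in $p$.

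Equating left-hand and right-hand sides of Pohozaev for each fixed small $\delta$ and passing first to $p\to\infty$ and then to $\delta\to 0$ yields $16\pi m_{j_0}^2\le 0$, contradicting $m_{j_0}\ge 1$. The main obstacle will be the uniform control of $p\partial_\nu u_p$ along $\partial\Omega\cap B_\delta$: the pointwise gradient estimate $(\mathcal P_4^k)$ only gives $|p\nabla u_p|\le C/|x-x_{j_0}|$ near a boundary concentration point, which integrated against the $|x-x_{j_0}|^2$ factor merely produces $O(\delta)$, not small enough to close the argument. The stronger $L^\infty$ bound $|p\partial_\nu u_p|\le C$ coming from the Poisson-kernel estimate \eqref{GreenBoundedAtBoundary} is what ultimately makes the inequality collapse, and this is precisely where the smoothness of $\partial\Omega$ near $x_{j_0}$ is decisively used.
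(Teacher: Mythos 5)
Your overall strategy (contradiction via a localized Pohozaev identity, a lower bound for $p\int_{B_\delta(x_{j_0})\cap\Omega}u_p^{p+1}$ obtained by rescaling the bubble, and $O(\delta^2)$ control of the terms on $\Omega\cap\partial B_\delta$) is the same as the paper's, and those parts are fine. The gap is in your treatment of the remaining boundary term $\tfrac12\int_{\partial\Omega\cap B_\delta}(p\,\partial_\nu u_p)^2\langle x-x_{j_0},\nu(x)\rangle\,ds_x$. You claim a uniform bound $|p\,\partial_\nu u_p|\leq C$ on $\partial\Omega$ by differentiating the Green representation and invoking \eqref{GreenBoundedAtBoundary}. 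But the Poisson kernel $\partial_{\nu_x}G(x,y)$ is \emph{not} uniformly bounded over all $y\in\Omega$: already in the unit disk one has $|\partial_{\nu_x}G(x,y)|=\frac{1-|y|^2}{2\pi|x-y|^2}\sim\frac{1}{{\rm dist}(y,\partial\Omega)}$ when $y$ approaches $x$ along the normal. The estimate \eqref{GreenBoundedAtBoundary} is only usable when the interior variable stays away from the boundary point, which is exactly what fails in the contradiction scenario: the measure $p\,u_p^p\,dy$ carries mass $\approx 8\pi m_{j_0}$ concentrated at $x_{j_0,p}$, whose distance $d_p$ to $\partial\Omega$ tends to $0$ (Remark \ref{rem:nonvedobordo} only says $d_p/\mu_{j_0,p}\to\infty$, not that $d_p$ stays positive). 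Plugging the correct kernel bound $|\partial_{\nu_x}G(x,y)|\leq C\,{\rm dist}(y,\partial\Omega)/|x-y|^2$ into your representation shows that $p|\partial_\nu u_p|$ actually blows up like $1/d_p$ at the boundary projection $x_p^*$ of $x_{j_0,p}$, so your $L^\infty$ bound is false precisely when it is needed. Quantitatively, the boundary term then contains a contribution of order $|x_p^*-x_{j_0}|^2/d_p$, and there is no control on the relative rates at which $|x_p^*-x_{j_0}|$ and $d_p$ tend to zero; the same obstruction appears if you instead use $(\mathcal{P}_4^k)$. So the term you identified as ``the main obstacle'' is indeed the obstacle, but the tool you propose does not remove it.

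The paper's proof avoids this estimate altogether: instead of centering the Pohozaev identity at the fixed point $x_{j_0}$, it centers it at a $p$-dependent point $y_p=x_i+\rho_{p,\delta}\nu(x_i)$, with $\rho_{p,\delta}$ chosen as a weighted average of $\langle x-x_i,\nu(x)\rangle$ against the density $(\partial_\nu u_p)^2$ on $\partial\Omega\cap B_\delta(x_i)$, so that the offending integral vanishes \emph{identically} (see \eqref{primoPezzoBordo}), while $|\rho_{p,\delta}|\leq 2\delta$ keeps all other terms $O(\delta^2)$. If you want to salvage your argument you need either this barycenter trick or some genuinely new uniform control of $p\,\partial_\nu u_p$ near a hypothetical boundary concentration point; the $L^1(\partial\Omega)$ bound that does follow from the Poisson kernel is not enough, since the term is quadratic in $\partial_\nu u_p$. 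Your lower bound for the left-hand side via $(\mathcal{P}_2^k)$ and Fatou is a legitimate (and slightly more quantitative) alternative to the paper's use of the characterization of $\mathcal S$, and your handling of $\Omega\cap\partial B_\delta$ matches \eqref{nablapupgr} in substance.
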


\begin{proof}
We argue by contradiction. Suppose that $x_i\in\mathcal S\cap\partial\Omega$, for some $i=\{1,\ldots,N\}$. Choose $r>0$ such that $\mathcal{S}\cap B_{r}(x_i)=\{x_i\}.$ By the characterization of $\mathcal S$ in Proposition \ref{prop:stime gradiente}-(i), we have that for all ${\delta}<r$, for all $p_0>1$  there exists $p>p_0$ such that
\begin{equation}\label{assurdo}
p\int_{\Omega\cap B_{\delta}(x_i)}u_p(x)^{p+1}\, dx\geq 1.
\end{equation}

Let $y_p:=x_i+\rho_{p,{\delta}}\nu(x_i)$, where
\[\rho_{p,{\delta}}:=\frac{\int_{\partial\Omega\cap B_{\delta}(x_i)}\left( \frac{\partial u_p(x)}{\partial \nu}  \right)^2 \langle x-x_i,\nu(x)\rangle ds_x}{\int_{\partial\Omega\cap B_{\delta}(x_i)}\left( \frac{\partial u_p(x)}{\partial \nu}  \right)^2\langle \nu(x_i),\nu(x)\rangle ds_x}\]
and ${\delta}<<r$ such that $\frac{1}{2}\leq \langle \nu(x_i),\nu(x)\rangle \leq 1$, for $x\in\partial\Omega\cap B_{\delta}(x_i)$.
With this choice of ${\delta}$ we have
\begin{equation}\label{stimaRo}|\rho_{p,{\delta}}|\leq 2{\delta}.\end{equation}
Moreover it is easy to see that the choice of $y_p$ implies
\begin{equation}
\label{primoPezzoBordo}
\int_{\partial\Omega\cap B_{\delta}(x_i)}   \left( \frac{\partial u_p(x)}{\partial \nu}  \right)^2  \langle x-y_p,\nu(x)\rangle ds_x
= 0.
\end{equation}

Applying the local Pohozaev identity \eqref{Pohozaev} in the set $\Omega\cap B_{\delta}(x_i)$ with $y=y_p$, using \eqref{primoPezzoBordo}, the boundary condition $u_p=0$ on $\partial\Omega$ (so that $\left|\frac{\partial u_p}{\partial \nu}\right|=|\nabla u_p| $ on $\partial\Omega$) we obtain

\begin{eqnarray}\label{pohoqui}
\frac{2p^2}{p+1}\int_{\Omega\cap B_{\delta}(x_i)}u_p(x)^{p+1}dx
& = &
\frac{p^2}{2}\int_{\partial \Omega\cap  B_{\delta}(x_i)}|\nabla u_p(x)|^2\langle x-y_p, \nu(x)\rangle ds_x
\nonumber\\
&&\ - \
\frac{1}{2}\int_{\Omega\cap \partial B_{\delta}(x_i)}|p\nabla u_p(x)|^2\langle x-y_p, \nu(x)\rangle ds_x
\nonumber\\
&&
\ + \ \int_{\Omega\cap \partial B_{\delta}(x_i)}\langle x-y_p, p \nabla u_p(x)\rangle \langle p\nabla u_p(x), \nu(x)\rangle ds_x
 \nonumber\\
&&\ + \
\frac{p^2}{(p+1)} \int_{\Omega\cap \partial B_{\delta}(x_i)}u_p(x)^{p+1}  \langle x-y_p,\nu(x)\rangle ds_x
\nonumber
\\
& \overset{\eqref{primoPezzoBordo}}{=} &
\ - \
\frac{1}{2}\int_{\Omega\cap \partial B_{\delta}(x_i)}|p\nabla u_p(x)|^2\langle x-y_p, \nu(x)\rangle ds_x
\nonumber\\
&&
\ + \ \int_{\Omega\cap \partial B_{\delta}(x_i)}\langle x-y_p, p\nabla u_p(x)\rangle \langle p \nabla u_p(x), \nu(x)\rangle ds_x
 \nonumber\\
&&\ + \
\frac{p^2}{(p+1)} \int_{\Omega\cap \partial B_{\delta}(x_i)}u_p(x)^{p+1}  \langle x-y_p,\nu(x)\rangle ds_x
\end{eqnarray}
Next we show that the three terms in the right hand side are $O({\delta}^2)$.

By Lemma \ref{lemma:sommeDiGreen} we have in particular that $pu_p\rightarrow \sum_{j=1}^N\gamma_j G(\cdot,x_j)$ in $C^2_{loc}(\overline{\Omega}\cap B_{r}(x_i)\setminus\{x_i\})$. Hence it is easy to see that for $x\in \overline{\Omega}\cap B_{r}(x_i)\setminus\{x_i\}$, since $x_i\in\partial\Omega$:
\begin{eqnarray}
\sum_{j=1}^N\gamma_j G(x,x_j)&=&\gamma_iG(x,x_i) + O(1)\overset{ \eqref{eqGreenFunction}}{=} O(1)
\nonumber
\\
\sum_{j=1}^N\gamma_j \nabla_x G(x,x_j)&=&\gamma_i \nabla_x G(x,x_i) + O(1)\overset{G \ \mbox{\scriptsize is symm}}{=} \gamma_i \nabla_y G(x,y)|_{y=x_i} + O(1)
\nonumber\\
&\overset{G\equiv 0\ \mbox{\scriptsize on }\partial\Omega}=&
\gamma_i \frac{\partial G(x,x_i)}{\partial\nu} + O(1)
\overset{\eqref{GreenBoundedAtBoundary}}{=} O(1).
%\nonumber
%\\
%&&\sum_{j=1}^N\gamma_j \frac{\partial G(x,x_j)}{\partial\nu}=\gamma_i \frac{\partial G(x,x_i)}{\partial\nu} + O(1)\overset{\eqref{GreenBoundedAtBoundary}}{=} O(1).
\label{nablapupgr}
\end{eqnarray}

So, by the uniform convergence of $pu_p$ and its derivative on compact sets, it follows  that
\begin{eqnarray*}
&& -
\frac{1}{2}\int_{\Omega\cap \partial B_{\delta}(x_i)}|p\nabla u_p(x)|^2\langle x-y_p, \nu(x)\rangle ds_x
\overset{\eqref{nablapupgr}}{=}O(1) \int_{\Omega\cap \partial B_{\delta}(x_i)}\langle x-y_p,\nu(x)\rangle ds_x =O({\delta}^2)
\\
&&
\int_{\Omega\cap \partial B_{\delta}(x_i)}\langle x-y_p, p\nabla u_p(x)\rangle \langle p \nabla u_p(x), \nu(x)\rangle ds_x
\overset{\eqref{nablapupgr}}{=}O(1) \int_{\Omega\cap \partial B_{\delta}(x_i)}|x-y_p| ds_x =O({\delta}^2)
\end{eqnarray*}
\begin{eqnarray*}
&&\frac{p^2}{p+1} \left|\int_{\Omega\cap \partial B_{\delta}(x_i)}u_p(x)^{p+1}  \langle x-y_p,\nu(x)\rangle ds_x\right| =
\qquad\qquad  \\
&& \qquad\qquad\qquad\qquad
\leq\frac{p}{p+1}  \|p u_p^{p+1}\|_{L^{\infty}(\bar\Omega\cap\partial B_{\delta}(x_i))}\int_{\Omega\cap \partial B_{\delta}(x_i)}\left|\langle x-y_p,\nu(x)\rangle \right|ds_x\overset{\eqref{pupVaAZeroUnifSuiCompatti}}{=} o_p(1)O({\delta}^2)
\end{eqnarray*}
where in all the three cases for the last equality we have also used the estimate in  \eqref{stimaRo}.
As a consequence, by \eqref{pohoqui},
\[\lim_{{\delta}\rightarrow 0}\lim_{p\rightarrow +\infty}\frac{p^2}{p+1}\int_{\Omega\cap B_{\delta}(x_i)}u_p(x)^{p+1}dx=0,\]
a contradiction to \eqref{assurdo}.
\end{proof}

\

\

\

\section{Scaling around local maxima}\label{section:scalinglocalmaxima}

\

By Proposition \ref{proposition:noBoundary} it follows that there exists $r>0$ such that
\begin{equation}\label{rPiccoloAbbastanza}
B_{4r}(x_i)\cap B_{4r}(x_j)=\emptyset,\quad B_{4r}(x_i)\subset \Omega,\quad \mbox{for all $i=1,\ldots, N$, $i\neq j$.}
\end{equation}

\

\begin{lemma}\label{lemma:possoRiscalareAttornoAiMax}
Let $N\in\mathbb N\setminus\{0\}$ be as in \eqref{ribattezzo} and let $r>0$ be as in \eqref{rPiccoloAbbastanza}. Let us define $y_{j,p}\in \overline{B_{2r}(x_j)}$, $j=1,\ldots, N$  such that \begin{equation}\label{defy}u_p(y_{j,p})=\max_{\overline{B_{2r}(x_j)}}u_p(x).
\end{equation}
Then, for any $j=1,\ldots, N$ and as $p\to+\infty$:
\begin{itemize}
\item[(i)]\[
\varepsilon_{j,p}:=\left[ p \upp(y_{j,p})^{p-1}\right]^{-1/2}\to 0.\]

\item[(ii)]  \[y_{j,p}\rightarrow x_j.\]

\item[(iii)]
\[
\fr{|y_{i,p}-y_{j,p}|}{\varepsilon_{j,p}}\to +\infty\ \mbox{for any $i=1,\ldots,N$, $i\neq j$.}
\]
\item[(iv)] Defining:
\begin{equation}\label{defRiscalataMax}w_{j,p}(y):=\fr{p}{\upp(y_{j,p})}(\upp(y_{j,p}+\varepsilon_{j,p} y)-\upp(y_{j,p})),\quad  y\in  \Omega_{j,p}:=\frac{\Omega-y_{j,p}}{\varepsilon_{j,p}},
%
%   B_{\frac{r}{\varepsilon_{j,p}}}\left(\frac{x_j-y_{j,p}}{\varepsilon_{j,p}} \right),
\end{equation}
then
\begin{equation}\label{convRiscalateNeiMax}
w_{j,p}\To U\ \mbox{ in }\ C^2_{loc}(\R^2)
\end{equation}
with $U$ as in \eqref{v0}.

\item[(v)]
\begin{equation}
\label{energiaBPallaLontanaDaZero}
\liminf_pp\int_{B_{r}(x_j)}u_p(x)^{p+1}\, dx\geq 8\pi\cdot \liminf_p u_p(y_{j,p})^2\ (>8\pi).
\end{equation}
\item[(vi)]\[\liminf_pp\int_{B_{r}(y_{j,p})}u_p(x)^{p}\, dx\geq 8\pi\cdot \liminf_p u_p(y_{j,p}).\]
\end{itemize}
\end{lemma}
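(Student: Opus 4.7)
The plan is to dispatch items $(i)$--$(iii)$ via the preliminary analysis already set up, run the Adimurthi--Grossi rescaling to obtain $(iv)$, and then read off $(v)$--$(vi)$ from a change of variables together with Fatou's lemma.

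For $(i)$, since $x_{j,p}\to x_j$ we have $x_{j,p}\in\overline{B_{2r}(x_j)}$ for $p$ large, so by \eqref{defy}, $\upp(y_{j,p})\geq \upp(x_{j,p})$, whence
\[
\varepsilon_{j,p}^{-2} \;=\; p\,\upp(y_{j,p})^{p-1}\;\geq\; p\,\upp(x_{j,p})^{p-1}\;=\;\mjp^{-2}\;\to\;+\infty.
\]
For $(ii)$, if along a subsequence $y_{j,p}\to\bar y\neq x_j$ then, by \eqref{rPiccoloAbbastanza}, $\bar y\in\overline{B_{2r}(x_j)}\setminus\mathcal S$, so $R_{k,p}(y_{j,p})\to d(\bar y,\mathcal S)>0$, and $(\mathcal{P}_3^k)$ forces $p\,\upp(y_{j,p})^{p-1}$ to stay bounded, contradicting $(i)$. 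For $(iii)$, applying $(ii)$ simultaneously to $i$ and $j$ gives $|y_{i,p}-y_{j,p}|\to|x_i-x_j|>0$ while $\varepsilon_{j,p}\to 0$.

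The core step is $(iv)$. A direct computation from $-\Delta \upp=\upp^{p}$ shows that $w_{j,p}$ solves
\[
-\Delta w_{j,p}=\Bigl(1+\tfrac{w_{j,p}}{p}\Bigr)^{p}\quad\text{in }\Omega_{j,p},\qquad w_{j,p}(0)=0,
\]
and $w_{j,p}\leq 0$ on the set $(B_{2r}(x_j)-y_{j,p})/\varepsilon_{j,p}$, which by $(i)$--$(ii)$ exhausts $\R^2$. Hence the right-hand side is bounded by $1$ on every fixed compact, and elliptic regularity yields $C^{2,\alpha}_{loc}$ bounds for $w_{j,p}$. Moreover the rescaled integral
\[
\int_{\Omega_{j,p}}\Bigl(1+\tfrac{w_{j,p}}{p}\Bigr)^{p}\,dy\;=\;\frac{p}{\upp(y_{j,p})}\int_{\Omega}\upp^{p}\,dx
\]
is $O(1)$ by Proposition \ref{prop:BoundEnergia}-$(iv)$ together with $\liminf \upp(y_{j,p})\geq 1$, the latter inherited from $\upp(y_{j,p})\geq \upp(x_{j,p})$ and \eqref{RemarkMaxCirca1}. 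A diagonal extraction then produces a limit $w\in C^{2}(\R^{2})$ satisfying $-\Delta w=e^{w}$, $w\leq 0$, $w(0)=0$, $\int_{\R^{2}}e^{w}<\infty$, and the Chen--Li classification forces $w=U$; uniqueness of the limit upgrades the extraction to the full sequence.

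For $(v)$ and $(vi)$, the substitution $x=y_{j,p}+\varepsilon_{j,p}y$ together with the identity $p\,\varepsilon_{j,p}^{2}\,\upp(y_{j,p})^{p-1}=1$ gives
\[
p\int_{B_r(x_j)}\upp^{p+1}\,dx\;=\;\upp(y_{j,p})^{2}\int_{A_{p}^{(j)}}\Bigl(1+\tfrac{w_{j,p}}{p}\Bigr)^{p+1}\,dy,
\]
where $A_{p}^{(j)}=(B_r(x_j)-y_{j,p})/\varepsilon_{j,p}\to\R^2$ by $(ii)$. Fatou's lemma applied with the $C^{2}_{loc}$ convergence from $(iv)$ yields $\liminf_p \int_{A_{p}^{(j)}}(1+w_{j,p}/p)^{p+1}\,dy\geq \int_{\R^{2}}e^{U}\,dy=8\pi$, and the elementary bound $\liminf(a_pb_p)\geq \liminf a_p\cdot \liminf b_p$ for nonnegative sequences delivers the estimate in $(v)$, the parenthetical strict bound reflecting $\liminf \upp(y_{j,p})\geq 1$. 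The same change of variables over $B_r(y_{j,p})$ with exponent $p$ produces a single factor $\upp(y_{j,p})$ rather than $\upp(y_{j,p})^{2}$, proving $(vi)$.

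The only delicate step is $(iv)$: one must exploit the sign $w_{j,p}\leq 0$ to keep the right-hand side of the rescaled equation bounded on compacta (so that standard elliptic estimates bootstrap), and then use the finite-energy bound above to rule out pathological Liouville limits before invoking the classification theorem. The remaining items are essentially bookkeeping via scaling.
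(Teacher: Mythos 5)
Your proposal is correct and follows essentially the same route as the paper: (i) via $u_p(y_{j,p})\geq u_p(x_{j,p})$ and \eqref{muVaAZero}, (ii)--(iii) via $(\mathcal{P}_3^k)$ and elementary geometry, (iv) via the sign $w_{j,p}\leq 0$, the bounded rescaled equation, elliptic estimates and the classification of finite-mass solutions of the Liouville equation, and (v)--(vi) via the change of variables $x=y_{j,p}+\varepsilon_{j,p}y$ and Fatou's lemma. The only cosmetic differences (a contradiction argument for (ii) instead of writing $(\mathcal{P}_3^k)$ directly at $y_{j,p}$, and a one-step scaling for (vi) in place of the paper's chain of inequalities through $\int u_p^{p+1}$) do not change the substance.
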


\begin{remark} $(iii)$ and $(iv)$ are respectively properties  $(\mathcal{P}_1^N)$ and $(\mathcal{P}_2^N)$ for the families of points $y_{j,p}$, $j=1,\ldots, N$.
Moreover by $(i)$ we get
\begin{equation}
\label{maxLocMag1}
\liminf_{p\rightarrow +\infty} u_p(y_{j,p})\geq 1
\end{equation}
and by (ii) we deduce that for any $\delta\in(0,2r)$ there exists $p_{\delta}>1$ such that
\begin{equation}\label{massimoVaAfinireInPallaPiccolaApiacere}
y_{j,p}\in B_{\delta} (x_j),\quad\mbox{ for } p\geq p_{\delta}.
\end{equation}
\end{remark}

\begin{proof}

(i): let $j\in\{1,\ldots,N\}$, by \eqref{ribattezzo}   $\xjp\rightarrow x_j$ as $p\rightarrow +\infty$ and so $\xjp\in B_r(x_j)$ for $p$ large. The assertion then follows observing that by definition $u_p(y_{j,p})\geq u_p(\xjp)$ and that \eqref{muVaAZero} holds for $\xjp$.

\

(ii): we know that $\xjp\rightarrow x_j$ as $p\rightarrow +\infty$, so w.l.g.  we may assume that
$R_{k,p}(y_{j,p})=|x_{j,p}-y_{j,p}|$ and so
$(\mathcal{P}_3^k)$ may be written as $\frac{|\xjp-y_{j,p}|}{\varepsilon_{j,p}}\leq C$, from which by (i) the conclusion follows.

\

(iii): just observing that by construction $|y_{i,p}-y_{j,p}|\geq 6r$ if $i\neq j$.

\

(iv): First we prove that for any $R>0$ there exists   $p_R>1$ such that
\begin{equation}\label{inclusioniSets}
B_R(0)\subset B_{\frac{2r}{\varepsilon_{j,p}}}\left(\frac{x_j-y_{j,p}}{\varepsilon_{j,p}} \right)\subset \Omega_{j,p}\ \mbox{
 for }\ p\geq p_R.
 \end{equation}
Indeed using (ii) and (i) we get respectively that $y_{j,p}\in B_{r}(x_j)$ and  $R\varepsilon_{j,p}<r$  for $p$ large. As a consequence $ B_{R\varepsilon_{j,p}}(y_{j,p})\subset B_{2r}(x_j)\subset\Omega$ for $p$ large, which gives  \eqref{inclusioniSets} by scaling back.

\

Observe that by \eqref{inclusioniSets} and the arbitrariness of $R$ it follows that
the set $\Omega_{j,p}
  \rightarrow \mathbb R^2$
as $p\rightarrow +\infty$.

Moreover let us fix $R>0$ and let $p_R$ be as in \eqref{inclusioniSets}, then for $p\geq p_R$ the function $w_{j,p}$ satisfies
\[
\left\{
\begin{array}{lr}
-\Delta w_{j,p}(y) = \left(\frac{\upp(y_{j,p}+\varepsilon_{j,p} y)}{(\upp(y_{j,p})}\right)^{p},  & y\in B_R(0)
\\
w_{j,p}(0) = 0
\end{array}
\right.
\]
and by the first inclusion in  \eqref{inclusioniSets} and the definition of $y_{j,p}$ we have
\[
u_p(y_{j,p}+\varepsilon_{j,p} y)\leq u_p(y_{j,p}),\ \mbox{ for any $y\in B_R(0)$, \ for $p\geq p_R$. }
\]
This implies both
\begin{equation}\label{minZero}w_{j,p}(y) \leq 0,  \  y\in  B_R(0)\end{equation}
and
\begin{equation}\label{bddLapl}|-\Delta w_{j,p}(y) |\leq 1,  \  y\in  B_R(0),\end{equation} for $p\geq p_R$.
From \eqref{minZero} and \eqref{bddLapl}, arguing as in the proof of  Proposition \ref{thm:x1N}, it follows that, for any $R>0$,  $w_{j,p}$ is uniformly bounded in $B_R(0)$, for $p\geq p_R$.
By standard elliptic regularity theory we have that $w_{j,p}$ is bounded in $C^{2,\alpha}_{loc}(\R^2).$
Thus by Arzela-Ascoli Theorem and a diagonal process on $R\rightarrow +\infty$, after passing to a subsequence,
 $w_{j,p}\rightarrow_p v \mbox{ in }\ C^2_{loc}(\R^2)$ and it is easy to see that  $v$ satisfies
\[
\left\{
\begin{array}{lr}
-\Delta v = e^{v}  & \mbox{in }\R^2
\\
v\leq 0  & \mbox{in }\R^2\\
v(0) = 0\\
\int_{\R^2}e^v<\infty
\end{array}
\right.
\]
hence  $v=U$ where $U$ is the function in \eqref{v0}.
\\
\\
(v) and (vi):
using \eqref{massimoVaAfinireInPallaPiccolaApiacere} we have that $y_{j,p}\in B_{\frac{r}{2}}(x_j)$ for large $p$ and  so  $B_{\frac{r}{2}}(y_{j,p})\subset B_{r}(x_j)\subset\Omega$ for $p$ large, namely, by scaling
\begin{equation}
\label{inclusioniSets2}
B_{ \frac{r}{ 2\varepsilon_{j,p} }}(0)\subset \Omega_{j,p},\quad \mbox{ for $p$ large }
\end{equation}
and
\[
p\int_{B_{r}(x_j)}u_p(x)^{p+1}\, dx\geq p\int_{B_{\frac{r}{2}}(y_{j,p})} u_p(x)^{p+1}\, dx
=  u_p(y_{j,p})^2 \int_{B_{\frac{r}{2\varepsilon_{j,p}}}(0)} \left(1+\frac{w_{j,p}(y)}{p}\right)^{p+1}\, dy.
\]
Passing to the limit as $p\rightarrow +\infty$, by (i), (iv) and Fatou's Lemma
\begin{eqnarray*}
\liminf_p p\int_{B_{r}(x_j)}u_p(x)^{p+1}\, dx &\geq&
\liminf_p p\int_{B_{\frac{r}{2}}(y_{j,p})}u_p(x)^{p+1}\, dx \geq\liminf_p u_p(y_{j,p})^2\int_{\R^2}e^{U(y)}\, dy\\&=&  8\pi\cdot \liminf_p u_p(y_{j,p})^2
\end{eqnarray*}
which gives (v), moreover by the previous relation
 \begin{eqnarray*}
 \liminf_p u_p(y_{j,p})\ p\int_{B_{r}(y_{j,p})}u_p(x)^{p}\, dx &\overset{\eqref{massimoVaAfinireInPallaPiccolaApiacere}}{\geq} &
\liminf_p p\int_{B_{r}(y_{j,p})}u_p(x)^{p+1}\, dx\geq
\liminf_p p\int_{B_{\frac{r}{2}}(y_{j,p})}u_p(x)^{p+1}\, dx\\
&& \geq 8\pi\cdot \liminf_p u_p(y_{j,p})^2.
\end{eqnarray*}
\end{proof}

\

\

\begin{proposition} \label{PropositionBetaConvergente}
Let $r>0$ be as in \eqref{rPiccoloAbbastanza} and define, for $j=1,\ldots,N$:
\begin{equation}\label{betap}
 \beta_{j,p}:=\frac{p}{u_p(y_{j,p})}\int_{B_{r}(y_{j,p})}u_p(x)^{p}\,dx.
 \end{equation} Then
\begin{equation}\label{limitedibeta}\lim_{p\rightarrow +\infty}\beta_{j,p}= 8\pi.
\end{equation}
\end{proposition}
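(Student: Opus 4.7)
I will prove the limit by combining three ingredients: a boundary integral representation giving the exact value $\lim\beta_{j,p}=\gamma_j/m_j$, a Pohozaev identity on $B_r(y_{j,p})$ yielding $\lim \tilde\beta_{j,p}=\gamma_j^2/(8\pi m_j^2)$ for the cognate quantity $\tilde\beta_{j,p}:=\int_{B_{r/\varepsilon_{j,p}}(0)}(1+w_{j,p}/p)^{p+1}dy$, and the pointwise comparison $\tilde\beta_{j,p}\leq\beta_{j,p}$ coming from $w_{j,p}\leq 0$. Matching these will force $\gamma_j=8\pi m_j$, hence $\lim\beta_{j,p}=8\pi$. Here $\gamma_j>0$ comes from Lemma \ref{lemma:sommeDiGreen} and $m_j:=\lim u_p(y_{j,p})\in [1,C]$ is extracted along a subsequence (using \eqref{maxLocMag1} and Proposition \ref{prop:BoundEnergia}(iii)); since the limit $8\pi$ is independent of the subsequence, the full sequence converges.

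\textbf{Step 1: boundary integral.} Integrating the equation $-\Delta u_p=u_p^p$ over $B_r(y_{j,p})$ and applying the divergence theorem gives
\[\beta_{j,p}=-\frac{1}{u_p(y_{j,p})}\int_{\partial B_r(y_{j,p})}p\,\partial_\nu u_p\,ds.\]
By \eqref{rPiccoloAbbastanza}, for large $p$ the circle $\partial B_r(y_{j,p})$ lies in $\overline{B_{2r}(x_j)}\setminus B_{r/2}(x_j)$, a compact subset of $\bar\Omega\setminus\mathcal S$ on which $pu_p\to v:=\sum_\ell \gamma_\ell G(\cdot,x_\ell)$ in $C^2$ by Lemma \ref{lemma:sommeDiGreen}. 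Parametrizing the moving circle by $\omega\in S^1$ and using the uniform convergence, $\int_{\partial B_r(y_{j,p})}p\partial_\nu u_p\,ds\to\int_{\partial B_r(x_j)}\partial_\nu v\,ds=-\gamma_j$, where the last equality uses $-\Delta v=\sum_\ell\gamma_\ell\delta_{x_\ell}$ together with $B_r(x_j)\cap\mathcal S=\{x_j\}$. Hence $\lim\beta_{j,p}=\gamma_j/m_j$. Combined with Lemma \ref{lemma:possoRiscalareAttornoAiMax}(vi) this yields $\gamma_j\geq 8\pi m_j$.

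\textbf{Step 2: Pohozaev identity.} Apply \eqref{Pohozaev} on $B_r(y_{j,p})$ with $y=y_{j,p}$, multiply by $p^2$, and rescale via $x=y_{j,p}+\varepsilon_{j,p}y$: the change of variable shows $p\int_{B_r(y_{j,p})} u_p^{p+1}dx = u_p(y_{j,p})^2\tilde\beta_{j,p}$, so the left-hand side becomes $\tfrac{2p\,u_p(y_{j,p})^2}{p+1}\tilde\beta_{j,p}\to 2m_j^2\tilde\beta_j$. The boundary term containing $u_p^{p+1}$ is $O(p^{-p})$ and disappears, while the quadratic terms converge (by the same parametrization argument as Step 1) to the corresponding integrals of $|\nabla v|^2$ and $(\partial_\nu v)^2$ on $\partial B_r(x_j)$. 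Decomposing $v=-\tfrac{\gamma_j}{2\pi}\log|\cdot-x_j|+\tilde\phi$ with $\tilde\phi$ harmonic in $B_r(x_j)$, and using the Pohozaev identity for the harmonic $\tilde\phi$ together with $\int_{\partial B_r(x_j)}\partial_\nu\tilde\phi\,ds=0$, one finds $r\int_{\partial B_r(x_j)}\bigl[(\partial_\nu v)^2-\tfrac12|\nabla v|^2\bigr]ds=\gamma_j^2/(4\pi)$. Therefore $\tilde\beta_j=\gamma_j^2/(8\pi m_j^2)$.

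\textbf{Step 3: comparison and conclusion.} Because $y_{j,p}$ maximizes $u_p$ on $\overline{B_{2r}(x_j)}\supset \overline{B_r(y_{j,p})}$, we have $w_{j,p}\leq 0$ throughout $B_{r/\varepsilon_{j,p}}(0)$, so
\[\tilde\beta_{j,p}-\beta_{j,p}=\int(1+w_{j,p}/p)^p\,(w_{j,p}/p)\,dy\leq 0,\]
whence $\tilde\beta_j\leq \lim\beta_{j,p}=\gamma_j/m_j$. Together with Step 2 this gives $\gamma_j\leq 8\pi m_j$, which combined with Step 1 forces $\gamma_j=8\pi m_j$ and hence $\lim\beta_{j,p}=\gamma_j/m_j=8\pi$. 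The main technical point is Step 2: justifying the passage to the limit in the boundary integrals over the $p$-dependent circles $\partial B_r(y_{j,p})$. This is handled by the standard parametrization argument, exploiting that the annulus $\overline{B_{2r}(x_j)\setminus B_{r/2}(x_j)}$ contains all these circles for $p$ large and sits inside $\bar\Omega\setminus\mathcal S$, where $pu_p\to v$ in $C^2$.
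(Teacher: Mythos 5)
Your proof is correct and follows essentially the same route as the paper: the lower bound comes from the bubble via Lemma \ref{lemma:possoRiscalareAttornoAiMax}-$(vi)$, and the upper bound comes from a local Pohozaev identity combined with the pointwise inequality $u_p^{p+1}\leq u_p(y_{j,p})\,u_p^{p}$ on the concentration ball, which together force $\gamma_j=8\pi m_j$. The only differences are technical --- you work at the fixed radius $r$ with the exact harmonic decomposition of $\sum_\ell\gamma_\ell G(\cdot,x_\ell)$ and identify $\gamma_j$ by a flux computation, whereas the paper works on $B_{\delta}(x_i)$ and sends $\delta\to0$ using the expansion $\nabla G(x,x_i)=-\frac{1}{2\pi}\frac{x-x_i}{|x-x_i|^2}+O(1)$ --- so the two arguments are interchangeable.
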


\begin{proof}
Fix $j\in\{1,\ldots,N\}.$ By Lemma \ref{lemma:possoRiscalareAttornoAiMax}-$(vi)$ we already know  that
\[
\lim_{p\rightarrow +\infty}\beta_{j,p}\geq 8\pi,
\]
so we have to prove only the opposite inequality:
\begin{equation}\label{betaMin}
\lim_{p\rightarrow +\infty}\beta_{j,p}\leq 8\pi.
\end{equation}
 For $\delta\in (0,r)$ by \eqref{rPiccoloAbbastanza}
 \begin{equation}\label{pallettinaDentroIns}
 B_{\delta}(x_j)\subset\Omega
 \end{equation} and we define
 \begin{equation}\label{defalphaj}
 \alpha_{j,p}(\delta):=\frac{p}{u_p(y_{j,p})}\int_{B_{\delta}(x_j)}u_p(x)^{p}\,dx.
 \end{equation}
 In order to prove \eqref{betaMin} it is sufficient to show that
\begin{equation}\label{alphadelta}
\lim_{\delta\rightarrow 0}\lim_{p\rightarrow +\infty}\alpha_{j,p}(\delta)\leq 8\pi
\end{equation}
since \eqref{betaMin} will follow observing that
\begin{equation}\label{betaEAlfaHannoStessoLimite}
\beta_{j,p}= \alpha_{j,p}(\delta) + \frac{p}{u_p(y_{j,p})}\int_{B_{r}(y_{j,p})\setminus B_{\delta}(x_j)}u_p(x)^{p}\, dx =  \alpha_{j,p}(\delta) + o_p(1),
\end{equation}
where the second term goes to zero as $p\rightarrow +\infty$ because $y_{j,p}\in\overline{B_{2r}(x_j)}$. Indeed $B_{r}(y_{j,p})\setminus B_{\delta}(x_j)\subset B_{3r}(x_j)\setminus B_{\delta}(x_j)\subset\bar\Omega\setminus \mathcal S$ and  we know that for any compact subset of $\bar\Omega\setminus \mathcal S$ the limit \eqref{integraleVaAZeroSuCompatti} holds and $\liminf _pu_p(y_{j,p})\geq 1$ by \eqref{maxLocMag1}.
\\

In the rest of the proof we show \eqref{alphadelta}.\\
By Lemma \ref{lemma:sommeDiGreen} we have  that $pu_p\rightarrow \sum_{j=1}^N\gamma_j G(\cdot,x_j)$ in $C^2_{loc}(B_{r}(x_i)\setminus\{x_i\})$.
Moreover
it is easy to see that for $x\in B_{r}(x_i)\setminus\{x_i\}$
\begin{eqnarray}
&&\sum_{j=1}^N\gamma_j G(x,x_j)=\gamma_iG(x,x_i) + O(1)\nonumber
\\
&&\sum_{j=1}^N\gamma_j\nabla G(x,x_j)=\gamma_i \nabla G(x,x_i) + O(1).\label{sommG}
\end{eqnarray}
Furthermore $G(x,x_i)=\frac{1}{2\pi}\log\frac{1}{|x-x_i|}+ H(x,x_i)$ by \eqref{defH}, so that, by the regularity of $H$, if $\delta\in (0,r)$ is small enough and $x\in \overline{B_{\delta}(x_i)}\setminus\{x_i\}$, then
\begin{eqnarray}
&& G(x,x_i)= \frac{1}{2\pi}\log\frac{1}{|x-x_i|} + O(1)
\nonumber\\
&&\nabla G(x,x_i) = -\frac{1}{2\pi}\frac{x-x_i}{|x-x_i|^2}+ O(1).\label{derG}
\end{eqnarray}

Applying the local Pohozaev identity \eqref{Pohozaev} in the set $B_{\delta}(x_i)$ with $y=x_i$ we obtain (observe that if $\nu(x)$ is the outer unitary normal vector to $\partial B_{\delta}(x_i)$ in $x$ then $\langle x-x_i,\nu(x)\rangle=|x-x_i|=\delta$)
\begin{eqnarray}\label{quaQua}
\frac{2p^2}{p+1}\int_{ B_{\delta}(x_i)}u_p(x)^{p+1}dx
& = &
-\frac{\delta}{2} \int_{\partial B_{\delta}(x_i)}
| p\nabla u_p(x)  |^2   ds_x
+\delta  \int_{\partial B_{\delta}(x_i)}
\langle p\nabla u_p(x),\nu(x)  \rangle^2 ds_x
\nonumber\\
&&\ + \
\frac{p^2}{p+1} \delta\int_{\partial B_{\delta}(x_i)}u_p(x)^{p+1}  ds_x
\end{eqnarray}
Next we analyze the behavior of the three terms in the right hand side.

By the uniform convergence of the derivative of  $pu_p$  on compact sets combined with \eqref{sommG} and \eqref{derG}, passing to the limit we have

\[-\frac{\delta}{2} \int_{\partial B_{\delta}(x_i)}
| p\nabla u_p(x)  |^2   ds_x  \underset{p\rightarrow +\infty}{\longrightarrow}
-\frac{\delta}{2}\int_{\partial B_{\delta}(x_i)}\left(-\gamma_i\frac{1}{2\pi}\frac{ x-x_i }{|x-x_i|^2} + O(1)\right)^2 ds_x  =-\frac{\gamma_i^2}{4\pi} + O(\delta)
\]
\[\delta  \int_{\partial B_{\delta}(x_i)}
\langle p\nabla u_p(x),\nu(x)  \rangle^2 ds_x \underset{p\rightarrow +\infty}{\longrightarrow}
\delta\int_{\partial B_{\delta}(x_i)}\left(-\gamma_i\frac{1}{2\pi}\frac{\langle x-x_i,\nu(x)\rangle }{|x-x_i|^2} + O(1)\right)^2 ds_x  =\frac{\gamma_i^2}{2\pi} + O(\delta)\]
and also
\begin{eqnarray*}
&&\frac{p^2}{p+1}\delta  \int_{\partial B_{\delta}(x_i)}u_p(x)^{p+1}   ds_x
\leq\frac{2\pi p}{p+1} \delta^2  \|p u_p^{p+1}\|_{L^{\infty}(\partial B_{\delta}(x_i))}\overset{\eqref{pupVaAZeroUnifSuiCompatti}}{=} o_p(1)O({\delta}^2).
\end{eqnarray*}
So by \eqref{quaQua}
and recalling  the definition of $\alpha_{j,p}$
\begin{equation}
\label{firsto}
\alpha_{j,p}(\delta)u_p(y_{j,p})^2     \overset{\eqref{defalphaj}}{=} u_p(y_{j,p})\ p \int_{ B_{\delta}(x_i)}u_p(x)^{p}dx    \geq p\int_{ B_{\delta}(x_i)}u_p(x)^{p+1}dx\overset{\eqref{quaQua}}{=}\frac{\gamma_i^2}{8\pi} + O(\delta)+ o_p(1)
\end{equation}
but
\begin{equation}
\label{secondt}
\gamma_j\overset{\eqref{defgammaj} - \eqref{pallettinaDentroIns}}{=}\lim_{\delta\rightarrow 0}\lim_{p\rightarrow +\infty} p\int_{ B_{\delta}(x_i)} u(x)^p\,dx \overset{\eqref{defalphaj} }{=}\lim_{\delta\rightarrow 0}\lim_{p\rightarrow +\infty} \alpha_{j,p}(\delta)u_p(y_{j,p}).
\end{equation}
Combining \eqref{firsto} and \eqref{secondt} we get \eqref{alphadelta}.
\end{proof}

\

Now, for any $j=1,\ldots,N$ we derive a decay estimate for the rescaled function  $w_{j,p}(y)$ defined in \eqref{defRiscalataMax} for  $y\in\Omega_{j,p}$.\\

First recall that  $\forall R>0$ there exists  $p_R>1$ such that
\begin{equation}
\label{inclusioniSetsInParagrafo}
B_R(0)\subset B_{ \frac{r}{ \varepsilon_{j,p} }}(0)\subset B_{\frac{2r}{\varepsilon_{j,p}}}\left(\frac{x_j-y_{j,p}}{\varepsilon_{j,p}} \right)\subset \Omega_{j,p}\ \mbox{
 for }\ p\geq p_R
 \end{equation}
(indeed using \eqref{massimoVaAfinireInPallaPiccolaApiacere} we have that $y_{j,p}\in B_{r}(x_j)$ and so $B_{r}(y_{j,p})\subset B_{2r}(x_j)\subset\Omega$ for $p$ large). Then by definition
\begin{equation} \label{Wminoredi1}
0\leq \left( 1+\frac{w_{j,p}(z)}{p}\right)\leq 1,\ \mbox{ for any }z\in \subset B_{\frac{2r}{\varepsilon_{j,p}}}\left(\frac{x_j-y_{j,p}}{\varepsilon_{j,p}} \right).
\end{equation}

\

Moreover observe that
 $\beta_{j,p}$ defined in \eqref{betap} can be now rewritten as
\begin{equation}\label{betap2}
\beta_{j,p}=	\int_{B_{\frac{r}{\varepsilon_{j,p}}}(0)} \left( 1+\frac{w_{j,p}(z)}{p}\right)^{p}dz.
\end{equation}

\

Last we recall that by Proposition \ref{prop:BoundEnergia}-(iv) and  \eqref{maxLocMag1} we have that
\begin{equation}\label{energiaLimitataConW}
\int_{\Omega_{j,p}}\left(1+\frac{w_{j,p}(z)}{p}\right)^{p}dz=\frac{p}{u_p(y_{j,p})}\int_{\Omega} u_p(x)^pdx =O(1).
\end{equation}

\

\

\

\begin{lemma}
\label{lemma:Iacopetti}
For any $\varepsilon>0$, there exist $R_{\varepsilon}>1$ and $p_{\varepsilon}>1$ such that
\begin{equation}\label{stimaIacopetti}
w_{j,p}(y)\leq \left(\frac{\beta_{j,p}}{2\pi}-\varepsilon\right)\log\frac{1}{|y|}+C_{\varepsilon},\qquad\forall j=1,\ldots,N
\end{equation}
for some $C_{\varepsilon}>0$, provided $2R_\varepsilon\leq |y|\leq \frac{r}{\varepsilon_{j,p}}$ and $p\geq p_{\varepsilon}$.
\end{lemma}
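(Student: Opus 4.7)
The plan is to derive the estimate from the Green's representation of $u_p$ on $\Omega$, rescaled around $y_{j,p}$ and normalized by subtracting its value at the origin of the rescaled variable, then to carefully split the resulting integral over $\Omega_{j,p}$ into three regions whose contributions combine to give the claimed bound.

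\emph{Step 1 (Green's representation).} Applying $u_p(\xi)=\int_\Omega G(\xi,x)\,u_p(x)^p\,dx$ at $\xi=y_{j,p}+\varepsilon_{j,p}y$ and at $\xi=y_{j,p}$, multiplying by $p/u_p(y_{j,p})$ and subtracting, the change of variables $x=y_{j,p}+\varepsilon_{j,p}z$ together with the identity $p\varepsilon_{j,p}^{2}u_p(y_{j,p})^{p-1}=1$ yields
\begin{equation*}
w_{j,p}(y)=\int_{\Omega_{j,p}}\big[\tilde G(y,z)-\tilde G(0,z)\big]\,f_p(z)\,dz,\qquad f_p(z):=\Big(1+\tfrac{w_{j,p}(z)}{p}\Big)^{p},
\end{equation*}
with $\tilde G(y,z):=G(y_{j,p}+\varepsilon_{j,p}y,\,y_{j,p}+\varepsilon_{j,p}z)$. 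Decomposing $G$ as in \eqref{defH}, the smoothness of $H$ on $\bar\Omega\times\bar\Omega$, with bounded first-variable gradient on compact subsets of $\Omega$ (applicable thanks to Proposition \ref{proposition:noBoundary}), gives $|H(y_{j,p}+\varepsilon_{j,p}y,\cdot)-H(y_{j,p},\cdot)|=O(\varepsilon_{j,p}|y|)=O(1)$ when $|y|\leq r/\varepsilon_{j,p}$; combined with the uniform bound \eqref{energiaLimitataConW} on $\int_{\Omega_{j,p}}f_p$, this yields
\begin{equation*}
w_{j,p}(y)=\frac{1}{2\pi}\int_{\Omega_{j,p}}\log\frac{|z|}{|y-z|}\,f_p(z)\,dz+O(1).
\end{equation*}

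\emph{Step 2 (Main term from the bubble).} Fix $\varepsilon>0$. Since $f_p\to e^U$ in $C^0_{loc}(\R^2)$ by Lemma \ref{lemma:possoRiscalareAttornoAiMax}(iv) and $\int_{\R^2}e^U=8\pi$, choose $R_\varepsilon>1$ with $\int_{B_{R_\varepsilon}(0)}e^U>8\pi-\pi\varepsilon$. Combining this with $\beta_{j,p}\to 8\pi$ (Proposition \ref{PropositionBetaConvergente}), there is $p_\varepsilon>1$ such that $A_{j,p}(R_\varepsilon):=\int_{B_{R_\varepsilon}(0)}f_p\,dz\geq \beta_{j,p}-3\pi\varepsilon$ for $p\geq p_\varepsilon$. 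For $|y|\in[2R_\varepsilon,\,r/\varepsilon_{j,p}]$ and $|z|\leq R_\varepsilon$ one has $|y-z|=|y|(1+O(R_\varepsilon/|y|))$, so the contribution of $B_{R_\varepsilon}$ to the integral in Step 1 equals $-\log|y|\cdot A_{j,p}(R_\varepsilon)+O_\varepsilon(1)\leq -(\beta_{j,p}-3\pi\varepsilon)\log|y|+O_\varepsilon(1)$.

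\emph{Step 3 (Error regions).} Set $\rho_p:=r/\varepsilon_{j,p}$ and split the remaining integration into $B_{\rho_p}\setminus B_{R_\varepsilon}$ and $\Omega_{j,p}\setminus B_{\rho_p}$. On the annulus $B_{\rho_p}\setminus B_{R_\varepsilon}$ the $f_p$-mass is at most $\beta_{j,p}-A_{j,p}(R_\varepsilon)\leq 3\pi\varepsilon$; using $\log(|z|/|y-z|)\leq \log(2|y|)+\log^+(1/|y-z|)$ for $|z|\leq 2|y|$ and $\log(|z|/|y-z|)\leq \log 2$ for $|z|>2|y|$, combined with $f_p\leq 1$ to control $\int_{|y-z|\leq 1}\log(1/|y-z|)\,dz=O(1)$, this region contributes at most $3\pi\varepsilon\log|y|+O_\varepsilon(1)$. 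On $\Omega_{j,p}\setminus B_{\rho_p}$, the image point $\xi_z=y_{j,p}+\varepsilon_{j,p}z$ lies in $\Omega\setminus B_r(y_{j,p})$; near each other center $y_{i,p}$ ($i\neq j$), the separation \eqref{rPiccoloAbbastanza} forces $|\xi_y-\xi_z|\geq 4r$, so both $|z|$ and $|y-z|$ are of order $1/\varepsilon_{j,p}$ and $|\log(|z|/|y-z|)|=O(1)$, while the $f_p$-mass there rescales to a bounded quantity (comparable to $\beta_{i,p}\sim 8\pi$); away from all centers, Proposition \ref{thm:x1N} gives $p\,u_p(\xi_z)=O(1)$, so $f_p(z)\leq (C/(p\,u_p(y_{j,p})))^p$ is super-exponentially small, and its $\log$-weighted contribution over the whole area ($\sim 1/\varepsilon_{j,p}^2$) is $o(1)$. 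Altogether this region contributes $O(1)$.

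\emph{Step 4 (Conclusion and main obstacle).} Summing the three contributions gives
\begin{equation*}
\int_{\Omega_{j,p}}\log\frac{|z|}{|y-z|}\,f_p\,dz\leq -\beta_{j,p}\log|y|+6\pi\varepsilon\log|y|+O_\varepsilon(1),
\end{equation*}
which combined with Step 1 and the rescaling $\varepsilon\mapsto \varepsilon/3$ yields \eqref{stimaIacopetti} uniformly in $j\in\{1,\ldots,N\}$. The main obstacle is the third region $\Omega_{j,p}\setminus B_{\rho_p}$: naively, the individual logarithms $\log|z|$ and $\log|y-z|$ may be of size $\log(1/\varepsilon_{j,p})\sim \log p$ near the other bubbles, so they do not separately contribute as $O(1)$; only the geometric separation \eqref{rPiccoloAbbastanza} of the concentration points makes $|z|$ and $|y-z|$ comparable in those neighborhoods, so that a cancellation occurs and only an $O(1)$ remainder survives in $\log(|z|/|y-z|)$.
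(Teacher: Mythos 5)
Your proof is correct and follows essentially the same route as the paper's: the Green's representation of $w_{j,p}(y)-w_{j,p}(0)$ with the regular part $H$ contributing $O(1)$, the choice of $R_\varepsilon$ so that the mass of $\left(1+\frac{w_{j,p}}{p}\right)^{p}$ in $B_{R_\varepsilon}(0)$ exceeds $8\pi-\varepsilon$ and hence (via Proposition \ref{PropositionBetaConvergente}) exceeds $\beta_{j,p}-O(\varepsilon)$, the bound $f_p\leq 1$ to tame the $\log\frac{1}{|y-z|}$ singularity, and the observation that the residual mass $O(\varepsilon)$ can only contribute $O(\varepsilon)\log|y|$. The sole difference is bookkeeping in the far region — the paper uses the dichotomy $|z|\lessgtr 2|y-z|$ on $\{R_\varepsilon\leq |z|\leq \frac{2r}{\varepsilon_{j,p}}\}$ together with the trivial bound $\log\frac{|z|}{|y-z|}=O(1)$ when $|z|\geq\frac{2r}{\varepsilon_{j,p}}$, whereas you cut at $\frac{r}{\varepsilon_{j,p}}$ and control the exterior through the separation \eqref{rPiccoloAbbastanza} of the concentration points and the super-exponential smallness of $\left(u_p/u_p(y_{j,p})\right)^{p}$ away from $\mathcal S$ — and both versions work.
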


\begin{proof}
Given $\varepsilon>0$, we can choose $R_\varepsilon>1$ such that
\[\int_{B_{R_{\varepsilon}}(0)}e^{U(z)}dz> 8\pi -\varepsilon.\]
The function $w_{j,p}$ is well defined in $B_{R_{\varepsilon}}(0)$ for $p$ large by \eqref{inclusioniSetsInParagrafo}, moreover by Fatou's lemma and \eqref{convRiscalateNeiMax}

\[\liminf_{p\to+\infty}\int_{B_{R_{\varepsilon}}(0)} \left(1+\frac{w_{j,p}(z)}{p}\right)^{p} dz\geq\int_{B_{R_{\varepsilon}}(0)}e^{U(z)}\,dz
\]
namely for $p_{\varepsilon}>1$ sufficiently large
\begin{equation}\label{5.16SW}
\int_{B_{R_{\varepsilon}}(0)}\left(1+\frac{w_{j,p}(z)}{p}\right)^{p}dz>8\pi-\varepsilon\qquad\mbox{for all $p\geq p_\varepsilon$.}
\end{equation}
Let $2R_{\varepsilon}\leq |y|\leq\frac{r}{\varepsilon_{j,p}}$. Observe that when $|z|\geq\frac{2r}{\varepsilon_{j,p}}$ then $2|y|\leq |z|$, hence
\[
\frac{2}{3}\leq\frac{|z|}{|z|+\frac{|z|}{2}}\leq\frac{|z|}{|z|+|y|}\leq          \frac{|z|}{|y-z|}\leq\frac{|z|}{|z|-|y|}\leq\frac{|z|}{|z|-\frac{|z|}{2}}=2,
\]
which implies
\begin{equation}
\label{stimalogo}
\log\frac{2}{3}\leq \log \frac{|z|}{|y-z|}\leq \log 2
\end{equation}
and so
\begin{eqnarray}\label{pezzoFuoriPallaNonConta}
\int_{\{ |z|\geq\frac{2r}{\varepsilon_{j,p}} \}\cap  \Omega_{j,p} } \log \frac{|z|}{|y-z|}\left(1+\frac{w_{j,p}(z)}{p}\right)^{p}dz \overset{\eqref{stimalogo}}{=}  O(1) \int_{\Omega_{j,p}}  \left(1+\frac{w_{j,p}(z)}{p}\right)^{p}dz
\overset{\eqref{energiaLimitataConW}}{=}  O(1).
\end{eqnarray}
By \eqref{problem} and the Green's function representation we have that for any $y\in {\Omega}_{j,p}$
\begin{eqnarray*}
u_p(\varepsilon_{j,p}y+y_{j,p})
&=&\int_{\Omega}G(\varepsilon_{j,p}y+y_{j,p},x)u_p(x)^pdx
\\
&=&
\varepsilon_{j,p}^2\int_{{\Omega}_{j,p}}G(\varepsilon_{j,p}y+y_{j,p},\varepsilon_{j,p}z+y_{j,p})u_p(\varepsilon_{j,p}z+y_{j,p})^pdz \\
&=&
\frac{u_p(y_{j,p})}{p}\int_{{\Omega}_{j,p}}G(\varepsilon_{j,p}y+y_{j,p},\varepsilon_{j,p}z+y_{j,p})
\left(1+\frac{w_{j,p}(z)}{p}\right)^{p}dz
\end{eqnarray*}
namely
\begin{equation}\label{greenRappW}
w_{j,p}(y) = -p + \int_{{\Omega}_{j,p}}G(\varepsilon_{j,p}y+y_{j,p},\varepsilon_{j,p}z+y_{j,p})
\left(1+\frac{w_{j,p}(z)}{p}\right)^{p}dz.
\end{equation}
As a consequence
\begin{eqnarray}\label{stimaDimWp}
w_{j,p}(y)&=&w_{j,p}(y)-w_{j,p}(0)
\nonumber
\\
&\overset{\eqref{greenRappW}}{=}&
\int_{  {\Omega}_{j,p} } \left[G(\varepsilon_{j,p}y+y_{j,p},\varepsilon_{j,p}z+y_{j,p})-G(y_{j,p},\varepsilon_{j,p}z+y_{j,p})\right] \left(1+\frac{w_{j,p}(z)}{p}\right)^{p}dz
\nonumber
\\
&\overset{\eqref{defH}}{=}&
\frac{1}{2\pi }\int_{{\Omega}_{j,p} }
\log\frac{|z|}{|y-z|}\left(1+\frac{w_{j,p}(z)}{p}\right)^{p}dz
  \ +
\nonumber
\\
&&
+ \
\int_{{\Omega}_{j,p} }  \left[H(\varepsilon_{j,p}y+y_{j,p},\varepsilon_{j,p}z+y_{j,p})-H(y_{j,p},\varepsilon_{j,p}z+y_{j,p})\right] \left(1+\frac{w_{j,p}(z)}{p}\right)^{p}dz
\nonumber
\\
&\overset{\eqref{energiaLimitataConW}}{=}&
\frac{1}{2\pi }\int_{{\Omega}_{j,p}}\log\frac{|z|}{|y-z|}\left(1+\frac{w_{j,p}(z)}{p}\right)^{p}dz \ \  + O(1)\nonumber
\\
&\overset{\eqref{pezzoFuoriPallaNonConta}}{=}&
\frac{1}{2\pi }\int_{\{|z|\leq\frac{2r}{\varepsilon_{j,p}}\}} \log\frac{|z|}{|y-z|}\left(1+\frac{w_{j,p}(z)}{p}\right)^{p}dz \ +\ O(1),
\end{eqnarray}
since $H$ is Lipschitz continuous and $|\varepsilon_{j,p}y|\leq r$.
Next, let us divide the last integral of \eqref{stimaDimWp} in the following way:

\begin{eqnarray}\label{secondacatena}
w_{j,p}(y) &\overset{\eqref{stimaDimWp}}{=}& \frac{1}{2\pi }\int_{\{|z|\leq R_{\varepsilon}\}} \log\frac{|z|}{|y-z|}\left(1+\frac{w_{j,p}(z)}{p}\right)^{p}dz
\nonumber
\\
&&
+\
\frac{1}{2\pi }\int_{\{R_{\varepsilon}\leq|z|\leq\frac{2r}{\varepsilon_{j,p}}\}\cap\{|z|\leq 2|y-z|\}} \log\frac{|z|}{|y-z|}\left(1+\frac{w_{j,p}(z)}{p}\right)^{p}dz
\nonumber
\\
&&
+\ \frac{1}{2\pi }\int_{\{R_{\varepsilon}\leq|z|\leq\frac{2r}{\varepsilon_{j,p}}\}\cap\{|z|\geq 2|y-z|\}} \log|z|\left(1+\frac{w_{j,p}(z)}{p}\right)^{p}dz
\nonumber
\\
&&
+\ \frac{1}{2\pi }\int_{\{R_{\varepsilon}\leq|z|\leq\frac{2r}{\varepsilon_{j,p}}\}\cap\{|z|\geq 2|y-z|\}} \log\frac1{|y-z|}\left(1+\frac{w_{j,p}(z)}{p}\right)^{p}dz \ +\ O(1)
\nonumber
\\
&=& I\ +\ II\ +\ III\ +\ IV\ +\ O(1).
\end{eqnarray}

\

In order to estimate the first integral in the right hand side of \eqref{secondacatena} we observe that if $|z|\leq R_\varepsilon$ then $2|z|\leq|y|$ and so
\[
\frac{|z|}{|y-z|}\leq \frac{|z|}{|y|-|z|}\leq \frac{|z|}{|y|-\frac{|y|}{2}}\leq\frac{2R_{\varepsilon}}{|y|},
\]
therefore
\begin{equation}\label{integrale1}
I=\frac{1}{2\pi }\int_{\{|z|\leq R_{\varepsilon}\}0} \log\frac{|z|}{|y-z|}\left(1+\frac{w_{j,p}(z)}{p}\right)^{p}dz\leq \frac{1}{2\pi }\log\frac{2R_\varepsilon}{|y|}\int_{\{|z|\leq R_{\varepsilon}\}} \left(1+\frac{w_{j,p}(z)}{p}\right)^{p}dz.
\end{equation}

Next, the second term in \eqref{secondacatena} can be trivially estimated as
\begin{eqnarray}\label{integrale2}
II&=& \frac{1}{2\pi }\int_{\{R_{\varepsilon}\leq|z|\leq\frac{2r}{\varepsilon_{j,p}}\}\cap\{|z|\leq 2|y-z|\}} \log\frac{|z|}{|y-z|}\left(1+\frac{w_{j,p}(z)}{p}\right)^{p}dz
\nonumber
\\
&&\leq
\frac{1}{2\pi }\log 2\int_{\{R_{\varepsilon}\leq|z|\leq\frac{2r}{\varepsilon_{j,p}}\}\cap\{|z|\leq 2|y-z|\}}\left(1+\frac{w_{j,p}(z)}{p}\right)^{p}dz
\nonumber\\
&&
\overset{\eqref{betap2}}{\leq}\frac{1}{2\pi }\log 2\left(\beta_{j,p}-\int_{B_{R_\varepsilon}(0)}\left(1+\frac{w_{j,p}(z)}{p}\right)^{p}dz   +\int_{\{\frac{r}{\varepsilon_{j,p}}\leq |z|\leq \frac{2r}{\varepsilon_{j,p}}\}}\left(1+\frac{w_{j,p}(z)}{p}\right)^{p}dz\right)
\nonumber\\
&&
 \overset{\eqref{maxLocMag1}}{\leq}
\frac{1}{2\pi }\log 2\left(\beta_{j,p}-\int_{B_{R_\varepsilon}(0)}\left(1+\frac{w_{j,p}(z)}{p}\right)^{p}dz   + p\int_{\{r\leq |x-y_{j,p}|\leq 2r\}}u_p(x)^{p}dx\right)
\nonumber\\
&&
\leq
\frac{1}{2\pi }\log 2\left(\beta_{j,p}-\int_{B_{R_\varepsilon}(0)}\left(1+\frac{w_{j,p}(z)}{p}\right)^{p}dz   + \varepsilon\right),
\end{eqnarray}
where in the last inequality we have used that since $y_{j,p}\rightarrow x_j$ then for $p$ large $\{r\leq |x-y_{j,p}|\leq 2r\}\subset K:= \{\frac{r}{2}\leq |x-x_j|\leq 3r\}\subset \overline\Omega\setminus\mathcal S $ and compact and so
\begin{equation}\label{ausss}
p\int_{r\leq |x-y_{j,p}|\leq 2r}u_p(x)^{p}dx\leq p\int_{K }u_p(x)^{p}dx\overset{\eqref{integraleVaAZeroSuCompatti} }{\leq} \varepsilon, \mbox{ for large $p$}.
\end{equation}

To deal with the third integral in the right hand side of \eqref{secondacatena} we notice that if $|z|\geq2|y-z|$, then
\[
|z|\geq2|y-z|\geq 2|z|-2|y|\qquad\mbox{and so}\qquad |z|\leq 2|y|,
\]
hence
\begin{eqnarray}\label{integrale3}
III&=&\frac{1}{2\pi }\int_{\{R_{\varepsilon}\leq|z|\leq\frac{2r}{\varepsilon_{j,p}}\}\cap\{|z|\geq 2|y-z|\}} \log|z|\left(1+\frac{w_{j,p}(z)}{p}\right)^{p}dz
\nonumber
\\
&&\leq
\frac{1}{2\pi }\log (2|y|)\int_{\{R_{\varepsilon}\leq|z|\leq\frac{2r}{\varepsilon_{j,p}}\}\cap\{|z|\geq 2|y-z|\}}\left(1+\frac{w_{j,p}(z)}{p}\right)^{p}dz
\nonumber
\\
&&
\overset{\eqref{betap2}}{\leq}\frac{1}{2\pi }\log (2|y|)\left(\beta_{j,p}-\int_{B_{R_\varepsilon}(0)}\left(1+\frac{w_{j,p}(z)}{p}\right)^{p}dz  + \int_{\{\frac{r}{\varepsilon_{j,p}}\leq |z|\leq \frac{2r}{\varepsilon_{j,p}}\}}\left(1+\frac{w_{j,p}(z)}{p}\right)^{p}dz\right)
\nonumber\\
&&
\overset{\eqref{ausss}}{\leq}
\frac{1}{2\pi }\log (2|y|)\left(\beta_{j,p}-\int_{B_{R_\varepsilon}(0)}\left(1+\frac{w_{j,p}(z)}{p}\right)^{p}dz   + \varepsilon\right).
\end{eqnarray}
Finally we estimate the fourth integral. Observe that
for any  $y\in\{2R_{\varepsilon}\leq |y|\leq \frac{r}{\varepsilon_{j,p}}\}$ one has the inclusion
\[
\left\{R_{\varepsilon}\leq|z|\leq\frac{2r}{\varepsilon_{j,p}},\ |y-z|\leq 1\right\}\subset B_{\frac{r}{\varepsilon_{j,p}}+1}(0)
\]
and that, since for $p$ large enough $B_{r+\varepsilon_{j,p}}(y_j)\subset B_{2r}(x_j)$, then by scaling, also
\[B_{\frac{r}{\varepsilon_{j,p}}+1}(0)\subset B_{\frac{2r}{\varepsilon_{j,p}}}\left(\frac{x_j-y_{j,p}}{\varepsilon_{j,p}} \right),\]
so that, as a consequence, the estimate in \eqref{Wminoredi1}  holds for
$z\in \left\{R_{\varepsilon}\leq|z|\leq\frac{2r}{\varepsilon_{j,p}},\ |y-z|\leq 1\right\}$.
Hence
\begin{eqnarray}\label{integrale4}
IV&=&\frac{1}{2\pi }\int_{\{R_{\varepsilon}\leq|z|\leq\frac{2r}{\varepsilon_{j,p}}\}\cap\{|z|\geq 2|y-z|\}} \log\frac1{|y-z|}\left(1+\frac{w_{j,p}(z)}{p}\right)^{p}dz
\nonumber\\
&=&\frac{1}{2\pi }\int_{\{R_{\varepsilon}\leq|z|\leq\frac{2r}{\varepsilon_{j,p}}\}\cap\{|y-z|\leq 1\}} \log\frac1{|y-z|}\left(1+\frac{w_{j,p}(z)}{p}\right)^{p}dz \nonumber\\
&&+\frac{1}{2\pi }\int_{\{R_{\varepsilon}\leq|z|\leq\frac{2r}{\varepsilon_{j,p}}\}\cap\{2\leq2|y-z|\leq |z|\}} \log\frac1{|y-z|}\left(1+\frac{w_{j,p}(z)}{p}\right)^{p}dz\nonumber\\
&\overset{\eqref{Wminoredi1}     }{\leq}& \frac{1}{2\pi }\int_{\{|y-z|\leq 1\}} \log\frac1{|y-z|}dz=O(1),
\end{eqnarray}
where we have also used that $\log t\leq 0 $ if $t\leq1$.\\
At last, substituting \eqref{integrale1}, \eqref{integrale2}, \eqref{integrale3}, \eqref{integrale4} into \eqref{secondacatena},  using \eqref{5.16SW}, that $\frac{2R_\varepsilon}{|y|}\leq1$ and observing that, by Proposition \ref{PropositionBetaConvergente},  $|\beta_{j,p}-8\pi|<\varepsilon$ for $p$ large, we obtain the thesis, indeed:
\begin{eqnarray}
w_{j,p}(y) &\leq& \frac{1}{2\pi }\log\frac{2R_\varepsilon}{|y|}\ (8\pi-\varepsilon)
\nonumber
\\
&&+\frac{1}{2\pi }\log2\ \left(\beta_{j,p}-(8\pi-\varepsilon) {\color{blue} + \varepsilon}\right)\nonumber
\\
&&+\frac{1}{2\pi }\log(2|y|)\ \left(\beta_{j,p}-(8\pi-\varepsilon){\color{blue} + \varepsilon}\right)+O(1)\nonumber
\\
&\leq&\frac1{2\pi}(\beta_{j,p}-2(\beta_{j,p}-8\pi)- {\color{blue}3}\varepsilon)\ \log\frac1{|y|}+O(1)
\nonumber
\\
&\leq&\left(\frac{\beta_{j,p}}{2\pi}-\varepsilon\right)\log\frac1{|y|}+O(1).
\end{eqnarray}
\end{proof}

\

\

\section{Conclusion of the Proof of Theorem \ref{teo:Positive}}\label{section:conclusion}

\

Let $r>0$ be as in \eqref{rPiccoloAbbastanza} and let $y_{j,p}$ for $j=1,\ldots,N$ be the local maxima of $u_p$ as in \eqref{defy}.
Let us define
\begin{equation}\label{mSec}
m_j:=\lim\limits_{p\to+\infty}u_p(y_{p,j})=\lim\limits_{p\to+\infty}\|u_p(x)\|_{L^\infty(\overline{B_{2r}(x_j)})},\quad\mbox{ for }j=1,\ldots, N
\end{equation}
$\;$\\
(observe that  \eqref{boundSoluzio} implies that $m_j<+\infty$ for any $j=1,\ldots,N$).

\

\begin{proposition}
\label{proposizione:quasiQuanteConclusione}
One has
\begin{equation}\label{valoreGamma}
\gamma_j=8\pi\cdot m_j
\end{equation}
($\gamma_j$ defined by Lemma \ref{lemma:sommeDiGreen})
\begin{equation}
\label{energiaSemiQuantizzata}
\lim_{p\rightarrow +\infty} p\int_{\Omega}|\nabla u_p|^2= 8\pi \sum_{j=1}^N m_j^2
\end{equation}

\begin{equation}
\label{N=k}
N=k.
\end{equation}
where $k$ is the integer in Proposition \ref{thm:x1N}.
\end{proposition}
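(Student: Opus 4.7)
I would prove the three identities in sequence, relying on Proposition \ref{PropositionBetaConvergente} and the local Pohozaev computation used in its proof, and then arguing \eqref{N=k} by contradiction through a ``second-bubble'' excess of mass in $\beta_{j,p}$. For \eqref{valoreGamma} the relation is almost immediate: combining \eqref{betaEAlfaHannoStessoLimite}, which gives $\beta_{j,p}=\alpha_{j,p}(\delta)+o_p(1)$ for every small $\delta\in(0,r)$, with $\beta_{j,p}\to 8\pi$, and multiplying by $u_p(y_{j,p})\to m_j$, one reads off $\lim_p p\int_{B_\delta(x_j)}u_p^p\,dx=8\pi m_j$; letting $\delta\to 0$ in \eqref{defgammaj} yields $\gamma_j=8\pi m_j$.

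For \eqref{energiaSemiQuantizzata}, by \eqref{energiaSuSoluzioni} it suffices to compute $\lim_p p\int_\Omega u_p^{p+1}\,dx$. I would split $\int_\Omega u_p^{p+1}\,dx=\sum_j\int_{B_\delta(x_j)}+\int_{\Omega\setminus\bigcup_jB_\delta(x_j)}$; the second piece sits in a compact subset of $\bar\Omega\setminus\mathcal{S}$ and so contributes $o_p(1)$ after multiplication by $p$ by \eqref{integraleVaAZeroSuCompatti}. For each $j$, the Pohozaev identity \eqref{quaQua} and the boundary estimates that follow it inside the proof of Proposition \ref{PropositionBetaConvergente} already give $p\int_{B_\delta(x_j)}u_p^{p+1}\,dx=\gamma_j^2/(8\pi)+O(\delta)+o_p(1)$, which by \eqref{valoreGamma} becomes $8\pi m_j^2+O(\delta)+o_p(1)$. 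Sending $p\to+\infty$, then $\delta\to 0$, and summing in $j$ produces \eqref{energiaSemiQuantizzata}.

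The main obstacle is \eqref{N=k}, which I would prove by contradiction. If $N<k$, some $x_{j_0}\in\mathcal{S}$ receives at least two of the families produced by Proposition \ref{thm:x1N}, say $x_{i_1,p},x_{i_2,p}\to x_{j_0}$ with $\alpha_{i_1},\alpha_{i_2}\geq 1$. Using $u_p(y_{j_0,p})=\max_{\bar B_{2r}(x_{j_0})}u_p$ together with Proposition \ref{lemma:rapportoMuBounded}, I can arrange $\mu_{i_1,p}\sim\varepsilon_{j_0,p}$ with $x_{i_1,p}$ sitting inside the $y_{j_0,p}$-bubble. The key geometric fact is that $z_p^\ast:=(x_{i_2,p}-y_{j_0,p})/\varepsilon_{j_0,p}$ must satisfy $|z_p^\ast|\to+\infty$: otherwise $w_{j_0,p}(z_p^\ast)$ is bounded by $C^2_{loc}$ convergence, which forces $\alpha_{i_2}=m_{j_0}$ and $\mu_{i_2,p}/\varepsilon_{j_0,p}$ bounded, and then $(\mathcal{P}_1^k)$ applied to $i_1,i_2$ translated into the $\varepsilon_{j_0,p}$-scale contradicts the boundedness of $z_p^\ast$.

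With this in hand, I would write $\beta_{j_0,p}=\int_{B_{r/\varepsilon_{j_0,p}}(0)}(1+w_{j_0,p}/p)^p\,dz$ and isolate two disjoint pieces for $p$ large: a fixed $B_R(0)$, contributing $\int_{B_R}e^U\to 8\pi$ as $R\to+\infty$ by $(\mathcal{P}_2^N)$, and $B_{R'\mu_{i_2,p}/\varepsilon_{j_0,p}}(z_p^\ast)$, on which the change of variables $z=z_p^\ast+(\mu_{i_2,p}/\varepsilon_{j_0,p})w$ combined with the identity $(\mu_{i_2,p}/\varepsilon_{j_0,p})^2=(u_p(y_{j_0,p})/u_p(x_{i_2,p}))^{p-1}$ makes the Jacobian cancel the factor $(u_p(x_{i_2,p})/u_p(y_{j_0,p}))^p$ coming from the integrand, leaving $(u_p(x_{i_2,p})/u_p(y_{j_0,p}))(1+v_{i_2,p}/p)^p\to(\alpha_{i_2}/m_{j_0})e^U$. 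Letting $R,R'\to+\infty$ would then produce $\liminf_p\beta_{j_0,p}\geq 8\pi+8\pi\alpha_{i_2}/m_{j_0}>8\pi$, contradicting Proposition \ref{PropositionBetaConvergente}. I expect the most delicate point to be this bookkeeping: ensuring the two localized regions are truly disjoint for $p$ large and that each captures the full $8\pi$-mass of its bubble in the iterated limit $R,R'\to+\infty$.
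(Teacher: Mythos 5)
Your treatment of \eqref{valoreGamma} coincides with the paper's. For \eqref{energiaSemiQuantizzata} and \eqref{N=k} you take genuinely different routes, both viable. For the energy limit, you recycle the Pohozaev identity \eqref{quaQua} together with \eqref{firsto} to get $p\int_{B_\delta(x_j)}u_p^{p+1}=\gamma_j^2/(8\pi)+O(\delta)+o_p(1)=8\pi m_j^2+O(\delta)+o_p(1)$, whereas the paper rescales around $y_{j,p}$ and passes to the limit by dominated convergence, using the decay estimate of Lemma \ref{lemma:Iacopetti} to dominate $(1+w_{j,p}/p)^{p+1}$ by $C|z|^{-3}$. Your version is more economical here (no decay estimate needed for this step), but the paper's computation \eqref{primoPezzoASxprece} is reused later in \eqref{minni}, so the dominated-convergence argument is not wasted effort. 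For \eqref{N=k} the paper's argument is much shorter than yours: it invokes the global lower bound of Lemma \ref{lemma:BoundEnergiaBassino} with all $k$ families, identifies $\lim_p u_p(x_{j,p})=m_j$ for $j\leq N$ via Proposition \ref{lemma:rapportoMuBounded} and the maximality of $y_{j,p}$ (giving $\mu_{j,p}/\varepsilon_{j,p}\to1$), and contradicts \eqref{energiaSemiQuantizzata} by the surplus $8\pi(k-N)\alpha_i^2\geq 8\pi$. Your ``local mass excess'' argument, which contradicts $\beta_{j_0,p}\to8\pi$ instead, is a legitimate alternative but re-proves by hand what Lemma \ref{lemma:BoundEnergiaBassino} already packages.

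Two points in your $N=k$ argument need more care than you give them. First, you cannot simply ``arrange'' $x_{i_1,p}$ to sit inside the $y_{j_0,p}$-bubble: it may happen that \emph{every} family accumulating at $x_{j_0}$ satisfies $|x_{i,p}-y_{j_0,p}|/\varepsilon_{j_0,p}\to+\infty$. The correct dichotomy is that, by $(\mathcal{P}_1^k)$ and the fact that $U\leq 0=U(0)$, at most one such family can stay at bounded $\varepsilon_{j_0,p}$-rescaled distance from $y_{j_0,p}$, so with at least two families at least one is far; that one plays the role of $i_2$. Second, disjointness of the two regions requires $|x_{i_2,p}-y_{j_0,p}|$ to be large compared to \emph{both} $R\varepsilon_{j_0,p}$ and $R'\mu_{i_2,p}$; since $\mu_{i_2,p}\geq\varepsilon_{j_0,p}$, the second condition is not implied by the first and must be checked separately (e.g.\ by evaluating $v_{i_2,p}$ at $(y_{j_0,p}-x_{i_2,p})/\mu_{i_2,p}$, where it is nonnegative, and using $v_{i_2,p}\to U\leq0$ to exclude a bounded nonzero limit). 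Both gaps are fixable, but as written the argument is incomplete at exactly the point you flag as delicate.
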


\begin{proof}
We prove \eqref{valoreGamma}. From the expression of $\gamma_j$ given by Lemma \ref{lemma:sommeDiGreen} combined with Proposition \ref{proposition:noBoundary} and the results in  \eqref{betaEAlfaHannoStessoLimite}, we have
\[\gamma_j=\lim_{\delta\rightarrow 0}\lim_{p\rightarrow +\infty}p\int_{B_{\delta}(x_j)} u(x)^p\,dx =\lim_{\delta\rightarrow 0}\lim_{p\rightarrow +\infty}  \alpha_{j,p}(\delta)u_p(y_{j,p})\overset{\eqref{betaEAlfaHannoStessoLimite}}{=} \lim_{p\rightarrow +\infty}\beta_{j,p}u_p(y_{j,p})=8\pi\cdot m_j,\]
where the last equality follows from Proposition \ref{PropositionBetaConvergente}.

\

Next we prove \eqref{energiaSemiQuantizzata}. Observe that
\begin{eqnarray}\label{centerC}
p\int_{\Omega}|\nabla u_p|^2&=&
p\int_{\Omega}u_p(x)^{p+1} dx= \sum_{j=1}^N p\int_{B_r(x_j)}u_p(x)^{p+1}\ dx +p\int_{\Omega\setminus \cup_{j=1}^N B_r(x_j)} u_p(x)^{p+1}\ dx
\nonumber\\
&\overset{\eqref{integraleVaAZeroSuCompatti}}{=} &
 \sum_{j=1}^N p\int_{B_r(x_j)}u_p(x)^{p+1}\ dx + o_p(1).
\end{eqnarray}
Moreover
\begin{equation}\label{perdidi}
p\int_{B_r(x_j)}u_p(x)^{p+1}\ dx= p\int_{B_{\frac{r}{2}}(y_{j,p})}u_p(x)^{p+1}\ dx + o_p(1)%=
% (8\pi+ o_p(1))u_p(y_{j,p})^2 + o_p(1)
,
 \end{equation}
since  for $p$ large enough $B_{\frac{r}{3}}(x_j)\subset B_{\frac{r}{2}}(y_{j,p})\subset B_r(x_j)$ so that
\[
p\int_{B_{r}(x_j)\setminus {B_{ \frac{r}{2}}(y_{j,p})} }u_p(x)^{p+1}dx
\leq p\int_{\{\frac{r}{3}<|x-x_j|<r\}}u_p(x)^{p+1}dx \overset{\eqref{integraleVaAZeroSuCompatti} }{=} o_p(1).
\]
Let us consider the  remaining term in the right hand side of \eqref{perdidi}. We want to rescale $u_p$ around the maximum point $y_{j,p}$ defining $w_{j,p}$ as in  \eqref{defRiscalataMax} and then pass to the limit.

Observe that, since by definition $w_{j,p}\leq 0$ , then
\begin{equation}
\label{arietta2} 0\leq  \left( 1+\frac{w_{j,p}(z)}{p}\right)^{p+1}=
 e^{(p+1)\log \left( 1+\frac{w_{j,p}(z)}{p} \right)}
 \leq e^{(p+1)\frac{w_{j,p}(z)}{p} }
 =   e^{w_{j,p}(z)}\leq C\frac{1}{|z|^3}.
\end{equation}
where the last inequality is due to the combination of  Lemma \ref{lemma:Iacopetti} and Proposition \ref{PropositionBetaConvergente} so that it holds
 provided $2R_\varepsilon\leq |z|\leq \frac{r}{\varepsilon_{j,p}}$ and $p$ is sufficiently large.
 Instead, when $|z|\leq 2R_{\varepsilon}$,  then by \eqref{inclusioniSetsInParagrafo} and \eqref{Wminoredi1} for $p$ sufficiently large
\begin{equation}\label{dentroReps2}
0\leq \left(1+\frac{w_{j,p}(z)}{p}\right)^{p+1}\leq 1,\quad\mbox{ for }|z|\leq 2R_{\varepsilon}.
\end{equation}
Thanks to \eqref{arietta2} and  \eqref{dentroReps2} we can apply the dominated convergence theorem deducing that
\begin{equation}
\label{primoPezzoASxprece}
\frac{p}{u_p(y_{j,p})^2}\int_{B_{\frac{r}{2}}(y_{j,p})}u_p(x)^{p+1}dx
=  \int_{B_{\frac{r}{2\varepsilon_{j,p}}}(0)} \left(1+\frac{w_{j,p}(z)}{p}\right)^{p+1}dz \underset{p\rightarrow +\infty}{\longrightarrow} \int_{\R^2}e^{U(z)}dz=8\pi.
\end{equation}
Substituting \eqref{primoPezzoASxprece}  into \eqref{perdidi} we then have
\begin{equation}\label{perPartiElaboratastar}
p\int_{B_r(x_j)}u_p(x)^{p+1}dx= (8\pi +o_p(1))\ m_j^2 + o_p(1).
\end{equation}

By substituting \eqref{perPartiElaboratastar} into \eqref{centerC} we get \eqref{energiaSemiQuantizzata}.

\

Finally we show \eqref{N=k}.
\\
We have defined $N\in \N\setminus\{0\}$ to be the number of points in the concentration set $\mathcal{S}$, hence $N\leq k$.\\
Recall that in \eqref{ribattezzo} w.l.g. we have relabeled the sequences of points $x_{i,p}$, $i=1,\ldots, k$ in such a way that
\[
x_{j,p}\rightarrow x_j,\ \forall \,j=1,\ldots, N\quad\mbox{ and }\quad\mathcal{S}=\{ x_1, \ x_2, \ \ldots, \ x_N \}
\]
and without loss of generality we may also assume that
\[R_{k,p}(y_{j,p})=|x_{j,p}-y_{j,p}|.\]
Then Proposition \ref{lemma:rapportoMuBounded} applied to the family $y_{j,p}$ implies that $
\limsup_{p\rightarrow +\infty}\frac{\mu_{j,p}}{\varepsilon_{j,p}}\leq  1,
$
but, by the definition of $y_{j,p}$ as a maximum of $u_p$ on $B_{2r}(x_j)$ and the fact that $x_{j,p}\rightarrow x_j$, we also have that $\frac{\mu_{j,p}}{\varepsilon_{j,p}}\geq  1$ for $p$ large. As a consequence
\begin{equation}
\label{coincidenza}
\lim_{p\rightarrow +\infty} u_p(x_{j,p})=\lim_{p\rightarrow +\infty} u_p(y_{j,p})=m_j, \quad\forall j=1,\ldots, N.
\end{equation}
Assume now by contradiction that $N<K$. Since, by Proposition \ref{thm:x1N}, $(\mathcal{P}_1^k)$ and $(\mathcal{P}_2^k)$ hold then Lemma \ref{lemma:BoundEnergiaBassino} applies and so we have
\begin{eqnarray*}
\lim_{p\rightarrow +\infty} p\int_\Omega |\na\upp|^2\,dx &\geq & 8\pi \sum_{i=1}^k \lim_p \left(u_p(x_{i,p})\right)^2\overset{\eqref{coincidenza}}{=} 8\pi \sum_{j=1}^N m_j^2 + 8\pi\!\!\!\!\sum_{i=N+1}^k \lim_p \left(u_p(x_{i,p})\right)^2 \\
&\overset{\eqref{RemarkMaxCirca1}}{\geq}& 8\pi \sum_{j=1}^N m_j^2 + 8\pi > 8\pi \sum_{j=1}^N m_j^2.
\end{eqnarray*}
 which contradicts \eqref{energiaSemiQuantizzata} and this concludes the proof.
\end{proof}

\

\

By the results in Section \ref{section:noconcentrationboundary}  and \eqref{N=k} in Proposition \ref{proposizione:quasiQuanteConclusione} we have that $\mathcal S=\{x_1, x_2,\cdots, x_k\}\subset\Omega$. We now locate the concentration points $x_i, \ i=1,\ldots, k$.

\

\begin{proposition}
\label{prop:equazionePuntiConcentrazione}
The concentration points $x_i, \ i=1,\ldots, k$, satisfy \eqref{x_j relazione}, namely
\[
m_i \nabla_x H(x_i,x_i)+\sum_{\ell\neq i} m_\ell \nabla_x G(x_i,x_\ell)=0.
\]
\end{proposition}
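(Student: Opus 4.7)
The plan is to apply a \emph{vector-valued} local Pohozaev-type identity on a small ball around each concentration point, letting first $p\to+\infty$ and then the radius shrink to zero. Fix $\delta>0$ small enough that $\overline{B_\delta(x_i)}\cap\mathcal S=\{x_i\}$ and $B_\delta(x_i)\subset\Omega$ (possible by Proposition \ref{proposition:noBoundary} and the fact that the points of $\mathcal S$ are isolated). Testing the equation $-\Delta u_p=u_p^p$ with $\partial_k u_p$ (for $k=1,2$) instead of with $\langle x-y,\nabla u_p\rangle$, integrating by parts on $B_\delta(x_i)$ and multiplying by $p^2$ gives
\[
\frac{p^2}{p+1}\int_{\partial B_\delta(x_i)} u_p^{p+1}\,\nu_k\,ds_x
= -\int_{\partial B_\delta(x_i)}\langle p\nabla u_p,\nu\rangle\, p\,\partial_k u_p\,ds_x
+ \frac{1}{2}\int_{\partial B_\delta(x_i)}|p\nabla u_p|^2\,\nu_k\,ds_x.
\]

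Next I would pass to the limit $p\to+\infty$. Combining Proposition \ref{thm:x1N}, Lemma \ref{lemma:sommeDiGreen} and the identifications \eqref{valoreGamma} and \eqref{N=k} of Proposition \ref{proposizione:quasiQuanteConclusione}, one has $pu_p\to w$ in $C^2_{\mathrm{loc}}(\overline\Omega\setminus\mathcal S)$, where $w(x):=8\pi\sum_{j=1}^k m_j\,G(x,x_j)$. Since $\partial B_\delta(x_i)$ is a compact subset of $\overline\Omega\setminus\mathcal S$, the left-hand side of the identity above tends to $0$ thanks to \eqref{pupVaAZeroUnifSuiCompatti}, yielding
\[
0 = -\int_{\partial B_\delta(x_i)} \partial_\nu w\,\partial_k w\,ds_x
+ \frac{1}{2}\int_{\partial B_\delta(x_i)}|\nabla w|^2\,\nu_k\,ds_x.
\]
Using \eqref{defH} I then split $w$ near $x_i$ into its singular and regular parts,
\[
w(x) = -4m_i\log|x-x_i| + 8\pi\,\psi_i(x),\qquad \psi_i(x):= m_i H(x,x_i)+\sum_{\ell\ne i} m_\ell G(x,x_\ell),
\]
the function $\psi_i$ being harmonic, hence smooth, in a neighborhood of $x_i$.

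The last step is to expand the two boundary integrals via this splitting together with a Taylor expansion of $\psi_i$ at $x_i$. Setting $\theta:=(x-x_i)/|x-x_i|$ and using the elementary identities $\int_{\partial B_\delta(x_i)}\theta_k\,ds_x=0$ and $\int_{\partial B_\delta(x_i)}\theta_j\theta_k\,ds_x=\pi\delta\,\delta_{jk}$, the purely singular pieces (of order $1/\delta^2$) integrate to zero by angular symmetry, the purely regular pieces are $O(\delta)$, and the singular--regular cross terms combine to give $64\pi^2\,m_i\,\partial_k\psi_i(x_i)+O(\delta)$. Letting $\delta\to 0$ and using $m_i\geq 1>0$ (see \eqref{maxLocMag1}) yields $\partial_k\psi_i(x_i)=0$ for $k=1,2$, which is exactly \eqref{x_j relazione}. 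The only delicate point is the bookkeeping that makes the $1/\delta^2$ contributions from the logarithmic part of $w$ cancel via the angular symmetry of $\partial B_\delta(x_i)$; once that is in place, the remaining estimates are a routine Taylor expansion of the smooth function $\psi_i$.
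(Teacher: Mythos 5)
Your proposal is correct and follows essentially the same route as the paper: you test the equation with $\partial_k u_p$ on a small ball $B_\delta(x_i)\subset\Omega$, obtain the same boundary identity, pass to the limit $p\to+\infty$ using the $C^2_{loc}$ convergence of $pu_p$ to $8\pi\sum_j m_j G(\cdot,x_j)$ and \eqref{pupVaAZeroUnifSuiCompatti}, and then let $\delta\to 0$. The only difference is that you carry out explicitly (and correctly, including the coefficient $64\pi^2 m_i$) the singular/regular splitting and angular averaging of the boundary integrals that the paper delegates to the citation \cite[pp.\ 511--512]{MaWei}.
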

\begin{proof} Let $\delta>0$ small enough so that $B_{\delta}(x_i)\subset\Omega$ and $B_{\delta}(x_i)\cap B_{\delta}(x_j)=\emptyset$, $i\neq j$. Clearly it is enough to prove the identity for $i=1$.\\
Multiplying equation \eqref{problem} by $\frac{\partial u_p}{\partial x_j}$, for $j=1,2$, and integrating on $B_{\delta}(x_1)$ we have that
\begin{equation}\label{intbyparts}
-\int_{B_{\delta}(x_1)} \Delta u\frac{\partial u}{\partial x_j}dx=\int_{B_{\delta}(x_1)}|u|^{p-1}u\frac{\partial u}{\partial x_j}dx=\frac{1}{p+1}\int_{B_{\delta}(x_1)} \frac{\partial}{\partial x_j}|u|^{p+1}dx=\frac{1}{p+1}\int_{\partial B_{\delta}(x_1)} |u|^{p+1}\nu_j ds_x
\end{equation}
where $\nu_j$ are the components of the outer normal at $\partial B_{\delta}(x_1)$.\\
For the first term
\begin{align}\label{primotermine}
\int_{B_{\delta}(x_1)}\Delta u\frac{\partial u}{\partial x_j}dx &=\int_{B_{\delta}(x_1)}\sum_{h}\frac{\partial^2 u}{\partial x_h^2}\frac{\partial u}{\partial x_j}dx
\nonumber
\\
&=\int_{B_{\delta}(x_1)}\sum_h\frac{\partial}{\partial x_h}\left(\frac{\partial u}{\partial x_j}\frac{\partial u}{\partial x_h}\right)dx-\int_{B_{\delta}(x_1)}\sum_h\left(\frac{\partial^2 u}{\partial x_h \partial x_j}\frac{\partial u}{\partial x_h}\right)dx
\nonumber
\\
&=\int_{\partial B_{\delta}(x_1)}\sum_h\left(\frac{\partial u}{\partial x_j}\frac{\partial u}{\partial x_h}\right)\,\nu_hds_x -\frac12\int_{B_{\delta}(x_1)} \frac{\partial}{\partial x_j}\left(\sum_h\left(\frac{\partial u}{\partial x_h}\right)^2\right)dx
\nonumber
\\
&=\int_{\partial B_{\delta}(x_1)}\frac{\partial u}{\partial x_j}\frac{\partial u}{\partial \nu}ds_x-\frac12\int_{\partial B_{\delta}(x_1)}|\nabla u|^2\nu_j ds_x.
\end{align}
Hence, combining \eqref{intbyparts} with \eqref{primotermine} and multiplying by $p^2$ we get:
\begin{equation}
\label{prima}
\frac{p^2}{p+1}\int_{\partial B_{\delta}(x_1)}|u_p|^{p+1}\nu_jds_x +p^2\int_{\partial B_{\delta}(x_1)}\frac{\partial u_p}{\partial x_j}\frac{\partial u_p}{\partial \nu}ds_x-\frac{p^2}2\int_{\partial B_{\delta}(x_1)}|\nabla u_p|^2\nu_jds_x=0.
\end{equation}

The first term of \eqref{prima} can be estimated as follows
\[
\left|\frac{p^2}{p+1}\int_{\partial B_{\delta}(x_1)}|u_p|^{p+1}\nu_j ds_x \right|\leq\frac{p}{p+1}2\pi\delta\|pu_p^{p+1}\|_{L^\infty(\partial B_{\delta}(x_1))}\overset{\eqref{pupVaAZeroUnifSuiCompatti}}{\underset{p\to+\infty}{\longrightarrow}}0,
\]
so letting $p\to+\infty$ in \eqref{prima} we get, for $j=1,2$,
\[
\int_{\partial B_{\delta}(x_1)}\frac{\partial}{\partial x_j}\left(\sum_{i=1}^N m_i G(x,x_i)\right)\frac{\partial}{\partial\nu}\left(\sum_{i=1}^N m_i G(x,x_i)\right)ds_x-\frac{1}{2}\int_{\partial B_{\delta}(x_1)}\left|\nabla\left(\sum_{i=1}^N m_i G(x,x_i)\right)\right|^2\nu_jds_x=0,
\]
where we have used Theorem \ref{teo:Positive}-\emph{(ii)} (which holds by virtue of Lemma \ref{lemma:sommeDiGreen} and Proposition \ref{proposizione:quasiQuanteConclusione}).\\
Computing the last integral as in \cite[pp. 511-512]{MaWei} we obtain
\[
-m_1\left[m_1\frac{\partial}{\partial x_j}H(x_{j,r}^*,x_1)+\sum_{\ell\neq 1}m_\ell\frac{\partial}{\partial x_j}G(x_{j,r}^*,x_\ell)\right]+o_{\delta}(1)=0\qquad\mbox{for $j=1,2$,}
\]
where $x_{j,r}^*\to x_1$ as ${\delta}\to0$, $j=1,2$. Hence passing to the limit as ${\delta}\to0$ we derive the desired relations
\[
m_1\frac{\partial}{\partial x_j}H(x_1,x_1)+\sum_{\ell\neq 1}m_\ell\frac{\partial}{\partial x_j}G(x_1,x_\ell)=0\qquad\mbox{for $j=1,2$.}
\]
\end{proof}

\

We now estimate from below the numbers $m_j$, $j=1,\ldots, N$ in \eqref{mSec} and so the
 $L^{\infty}$-norm of $u_p$. We will need the following result
\begin{lemma}{(\cite[Lemma 2.1]{RenWeiTAMS1994})}
Let $B\subset\R^2$ be a smooth bounded domain. Then for every $p>1$ there exists $D_p>0$ such that
\begin{equation}\label{stimaRenWey}
\|v\|_{L^{p+1}(B)}\leq D_p (p+1)^{1/2}\|\nabla v\|_{L^2(B)}, \quad\forall v\in H^1_0(B).
\end{equation}
Moreover
\begin{equation}
\label{Dptende}
\lim_{p\rightarrow +\infty} D_p=\frac{1}{(8\pi e)^{1/2}}.
\end{equation}
\end{lemma}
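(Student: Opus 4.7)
The inequality \eqref{stimaRenWey} is nothing but the Sobolev embedding $H^1_0(B) \hookrightarrow L^{p+1}(B)$, valid for every $p>1$ in dimension two, and I would simply take $D_p$ to be the best (smallest) constant for which it holds, i.e.
$$D_p := \sup_{v\in H^1_0(B),\ v\not\equiv 0} \frac{\|v\|_{L^{p+1}(B)}}{(p+1)^{1/2}\,\|\nabla v\|_{L^2(B)}}.$$
Finiteness of $D_p$ follows from the Sobolev embedding itself, so the entire nontrivial content of \eqref{Dptende} lies in the identification of $\lim_{p\to+\infty}D_p=(8\pi e)^{-1/2}$, which I would establish by matching upper and lower bounds.

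For the upper bound, the plan is to invoke the Moser--Trudinger inequality
$$\sup_{\|\nabla v\|_{L^2}\leq 1}\int_B e^{4\pi v^2}\,dx\leq C(|B|).$$
Expanding the exponential in its Taylor series and integrating term by term yields $(4\pi)^k\|v\|_{2k}^{2k}/k! \leq C$ for every $k\in\mathbb N$, so that $\|v\|_{2k}^{2k}\leq C\,k!\,(4\pi)^{-k}$ whenever $\|\nabla v\|_{L^2}\leq 1$. Applying Stirling's formula $k!\sim\sqrt{2\pi k}\,(k/e)^k$ with $q=2k$ produces
$$\|v\|_{L^q(B)}\leq (1+o(1))\,\left(\frac{q}{8\pi e}\right)^{1/2}\qquad \text{as } q\to+\infty,$$
which gives $\limsup_{p\to+\infty}D_p\leq (8\pi e)^{-1/2}$ along even values of $p+1$; log-convexity of $q\mapsto\log\|v\|_{L^q(B)}$ then extends the estimate to all $p>1$.

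For the lower bound I would test the Rayleigh-type quotient defining $D_p$ against the classical Moser sequence. Fix $x_0\in B$ and $r>0$ with $B_r(x_0)\subset B$, and set
$$w_n(x)=\frac{1}{\sqrt{2\pi}}\begin{cases}\sqrt{\log n},& |x-x_0|\leq r/n,\\ \log(r/|x-x_0|)/\sqrt{\log n},& r/n<|x-x_0|<r,\\ 0,& |x-x_0|\geq r.\end{cases}$$
A direct radial computation gives $\|\nabla w_n\|_{L^2(B)}=1$, while the contribution from the constant-core disk alone yields
$$\|w_n\|_{L^{p+1}(B)}^{p+1}\geq \pi r^2\,n^{-2}\left(\frac{\log n}{2\pi}\right)^{(p+1)/2}.$$
Optimizing in $n$ for fixed $p$ (the critical point is $\log n=(p+1)/4$, i.e.\ $n=e^{(p+1)/4}$) produces
$$\|w_n\|_{L^{p+1}(B)}\geq (\pi r^2)^{1/(p+1)}\left(\frac{p+1}{8\pi e}\right)^{1/2},$$
whence $\liminf_{p\to+\infty}D_p\geq (8\pi e)^{-1/2}$, matching the upper bound and giving \eqref{Dptende}.

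The main technical obstacle is the precise bookkeeping of constants: the sharp Moser--Trudinger exponent $4\pi$ combined with the Stirling factor $1/e$ must conspire to produce \emph{exactly} $(8\pi e)^{-1/2}$ on the upper side, while the optimization $\log n=(p+1)/4$ for the Moser sequence must reproduce the very same combination $8\pi e$ on the lower side. It is this delicate matching of two independent asymptotic calculations that pins down the limit of $D_p$ to the specific distinguished value appearing throughout the paper.
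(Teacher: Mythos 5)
Your argument is correct, but note that the paper itself offers no proof of this lemma: it is imported verbatim as \cite[Lemma 2.1]{RenWeiTAMS1994}, so there is no internal proof to compare against. What you have written is a sound, self-contained derivation, and it is essentially the same circle of ideas as Ren--Wei's original argument: the upper bound rests on Moser's sharp exponent $4\pi$ (Ren--Wei reach the same Gamma-function bound $\|v\|_{2k}^{2k}\le C\,k!\,(4\pi)^{-k}$ via symmetrization and the one-dimensional substitution $w(t)=\sqrt{4\pi}\,u(re^{-t/2})$ together with $w(t)\le t^{1/2}$, rather than by Taylor-expanding the Moser--Trudinger inequality, but the computation is the same), and the lower bound uses the same Moser sequence with the same optimization $\log n=(p+1)/4$. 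I verified the bookkeeping: Stirling gives $(k!)^{1/(2k)}=(1+o(1))\sqrt{k/e}$, hence $\|v\|_{2k}\le(1+o(1))\sqrt{2k/(8\pi e)}$, and on the test-function side $e^{-2t}t^{(p+1)/2}$ is maximized at $t=(p+1)/4$ with value $((p+1)/(8\pi e/(2\pi)\cdot 2\pi))^{(p+1)/2}e^{-(p+1)/2}\cdot 4^{-(p+1)/2}=((p+1)/(8\pi e))^{(p+1)/2}$, so both sides do land on $(8\pi e)^{-1/2}$. Two small points you should make explicit: the passage from even integers $p+1=2k$ to arbitrary $p>1$ is cleanest via H\"older on the finite-measure set $B$, namely $\|v\|_{p+1}\le|B|^{\frac{1}{p+1}-\frac{1}{2k}}\|v\|_{2k}$ with $2k$ the smallest even integer $\ge p+1$ (the precise ``log-convexity'' statement is that $1/q\mapsto\log\|v\|_{L^q}$ is convex, which requires the endpoint norms); and in the lower bound the Moser parameter $n$ should be treated as a continuous parameter so that $\log n=(p+1)/4$ can be achieved exactly, after which $(\pi r^2)^{1/(p+1)}\to1$ disposes of the core-area prefactor. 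Neither affects the validity of the proof.
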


\

\begin{proposition}
\label{prop:maxGeqE}
We have
\[m_j \geq \sqrt{e},\qquad \forall j=1,\ldots,N.\]
and hence
\[
\lim\limits_{p\rightarrow +\infty} \|u_p\|_{\infty}\geq \sqrt{e}.
\]
\end{proposition}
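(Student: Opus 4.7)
The plan is to apply the Ren--Wei Sobolev--Moser-type inequality \eqref{stimaRenWey} to a suitable truncation of $u_p$ around each local concentration point $x_j$, and then take the limit $p\to+\infty$, using the previously established energy concentration $p\int_{B_r(x_j)}u_p^{p+1}dx\to 8\pi m_j^2$ (which is essentially \eqref{perPartiElaboratastar} already proved) to identify both sides of the resulting inequality.

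Fix $j\in\{1,\ldots,N\}$ and pick $r>0$ as in \eqref{rPiccoloAbbastanza}. Let $\phi\in C_c^\infty(B_{2r}(x_j))$ be a cut-off with $0\leq\phi\leq 1$ and $\phi\equiv 1$ on $B_r(x_j)$. Then $v_p:=\phi u_p\in H_0^1(B_{2r}(x_j))$, and \eqref{stimaRenWey} gives
\[
\left(\int_{B_{2r}(x_j)}(\phi u_p)^{p+1}dx\right)^{\!1/(p+1)}\leq D_p\sqrt{p+1}\,\|\nabla(\phi u_p)\|_{L^2(B_{2r}(x_j))}.
\]

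\textbf{LHS.} Since $\phi\equiv 1$ on $B_r(x_j)$ and $u_p>0$,
\[
\int_{B_{2r}(x_j)}(\phi u_p)^{p+1}dx\geq\int_{B_r(x_j)}u_p^{p+1}dx=\frac{8\pi m_j^2}{p}(1+o_p(1))
\]
by \eqref{perPartiElaboratastar}. Taking $(p+1)$-th roots and using that $\log(8\pi m_j^2/p)/(p+1)\to 0$ yields
\[
\liminf_{p\to+\infty}\left(\int_{B_{2r}(x_j)}(\phi u_p)^{p+1}dx\right)^{\!1/(p+1)}\geq 1.
\]

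\textbf{RHS.} Expanding the gradient,
\[
\|\nabla(\phi u_p)\|_2^2=\int\phi^2|\nabla u_p|^2\,dx+\int|\nabla\phi|^2 u_p^2\,dx+2\int\phi u_p\,\nabla\phi\cdot\nabla u_p\,dx.
\]
The support of $\nabla\phi$ is contained in the annulus $B_{2r}(x_j)\setminus B_r(x_j)$, which is a compact subset of $\bar\Omega\setminus\mathcal S$; hence by \eqref{pu_va_a_zero} and \eqref{pGradupVaAZeroUnifSuiCompatti} we get $p u_p^2\to 0$ and $p|\nabla u_p|^2\to 0$ uniformly on $\mathrm{supp}(\nabla\phi)$, so the last two integrals multiplied by $p$ tend to $0$. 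Moreover by testing $-\Delta u_p=u_p^p$ with $u_p$ on $B_r(x_j)$,
\[
p\int_{B_r(x_j)}|\nabla u_p|^2\,dx=p\int_{B_r(x_j)}u_p^{p+1}dx+p\int_{\partial B_r(x_j)}u_p\frac{\partial u_p}{\partial\nu}\,ds_x,
\]
and the boundary term vanishes in the limit since $u_p\to 0$ uniformly on $\partial B_r(x_j)\Subset\bar\Omega\setminus\mathcal S$ while $p|\nabla u_p|$ stays bounded there by $(\mathcal P_4^k)$. Combining with the uniform vanishing of $p|\nabla u_p|^2$ on the annulus gives
\[
\lim_{p\to+\infty}p\,\|\nabla(\phi u_p)\|_2^2=\lim_{p\to+\infty}p\int_{B_r(x_j)}|\nabla u_p|^2\,dx=8\pi m_j^2.
\]
Using $\lim_{p\to\infty}D_p=1/\sqrt{8\pi e}$ from \eqref{Dptende},
\[
\lim_{p\to+\infty}D_p\sqrt{p+1}\,\|\nabla(\phi u_p)\|_2=\frac{1}{\sqrt{8\pi e}}\cdot\sqrt{8\pi m_j^2}=\frac{m_j}{\sqrt e}.
\]

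Passing to the limit in the inequality yields $1\leq m_j/\sqrt e$, i.e.\ $m_j\geq\sqrt e$ for every $j=1,\ldots,N$. Since $\|u_p\|_\infty\geq u_p(y_{j,p})\to m_j$, the bound $\liminf_p\|u_p\|_\infty\geq\sqrt e$ follows. The main technical point is the clean justification that the cross and cut-off error terms in $p\|\nabla(\phi u_p)\|_2^2$ vanish, but this is immediate from the locally uniform decay of $\sqrt p\,u_p$ and $\sqrt p\,\nabla u_p$ away from $\mathcal S$ proven in Section~\ref{section:preliminaryresults}.
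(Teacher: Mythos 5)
Your proposal is correct and follows essentially the same route as the paper: a cut-off of $u_p$ around $x_j$, the Ren--Wei inequality \eqref{stimaRenWey} together with \eqref{Dptende}, the local energy identity obtained by testing the equation with $u_p$ on $B_r(x_j)$, and the rescaled limit $p\int_{B_r(x_j)}u_p^{p+1}\to 8\pi m_j^2$; the only difference is cosmetic, in that the paper first derives the lower bound $p\int_{B_r(x_j)}|\nabla u_p|^2\geq 8\pi e+o_p(1)$ and separately identifies this quantity as $8\pi m_j^2+o_p(1)$, whereas you pass to the limit on both sides of the inequality directly. All the error terms you must control (cut-off cross terms, boundary term) are handled exactly as in the paper via \eqref{pu_va_a_zero} and \eqref{pGradupVaAZeroUnifSuiCompatti}.
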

\begin{proof}
Let $r$ be as in \eqref{rPiccoloAbbastanza}. Let us take $\chi\in C^2([0,2r))$, $\chi(s)=1$ for $s\in [0,r)$, $\chi(s)=0$ for $s\in [\frac{3}{4}r,2r)$ and consider the cut-off function $\chi_j(x):=\chi(|x-x_j|)$.
Then $\widetilde u_{p,j}:=u_p\chi_j\in H^1_0(B_{2r}(x_j))$ and satisfies
\begin{eqnarray}
\label{energiautildeapproxu}
p\int_{B_{2r}(x_j)}|\nabla \widetilde u_{p,j}|^2\,dx &=& p\int_{B_{r}(x_j)}|\nabla u_{p}|^2\,dx +
p\int_{\{r\leq |x-x_j|\leq \frac{3}{4}r\}}|\nabla u_{p}|^2\chi_j^2\,dx
\nonumber
\\
&& + p\int_{\{r\leq |x-x_j|\leq \frac{3}{4}r\}} u_p^2|\nabla\chi_j|^2\,dx \nonumber
\\
&&+2p\int_{\{r\leq |x-x_j|\leq \frac{3}{4}r\}} u_p\langle\nabla u_p,\nabla\chi_j\rangle\chi_j\, dx
\nonumber
\\
&\overset{\eqref{pu_va_a_zero}}{=}& p\int_{B_{r}(x_j)}|\nabla u_{p}|^2\,dx + o_p(1).
\end{eqnarray}
Moreover
\begin{eqnarray}
\label{energiautildeapproxuallap}
p\int_{B_{2r}(x_j)}\widetilde u_{p,j}^{p+1}\,dx &=&
p\int_{B_{r}(x_j)}u_{p}^{p+1}\,dx + p\int_{\{r\leq |x-x_j|\leq \frac{3}{4}r\}} u_{p}^{p+1}\chi_j^{p+1}\,dx
\nonumber
\\
&\overset{\eqref{pupVaAZeroUnifSuiCompatti}}{=} &
p\int_{B_{r}(x_j)}u_{p}^{p+1}\,dx + o_p(1).
\end{eqnarray}
By \eqref{energiautildeapproxu} and \eqref{energiautildeapproxuallap}, applying \eqref{stimaRenWey} to $\widetilde u_{p,j}$ and using \eqref{Dptende}, we then get
\begin{eqnarray}
\label{topolino}
p\int_{B_r(x_j)}|\nabla u_p|^2\,dx &\overset{\eqref{energiautildeapproxu}}{=}&
p\int_{B_{2r}(x_j)}|\nabla \widetilde u_{p,j}|^2\,dx +o_p(1)
\nonumber\\
 &\overset{\eqref{stimaRenWey}}{\geq}&
 \frac{p}{(p+1)p^{\frac{2}{p+1}}D_p^2} \left[p\int_{B_{2r}(x_j)}\widetilde u_{p,j}^{p+1}\,dx\right]^{\frac{2}{p+1}} +o_p(1)
\nonumber\\
&\overset{\eqref{Dptende}}{=}& (8\pi e +o_p(1))   \left[p\int_{B_{2r}(x_j)}\widetilde u_{p,j}^{p+1}\,dx\right]^{\frac{2}{p+1}} +o_p(1)
\nonumber\\
&\overset{\eqref{energiautildeapproxuallap}}{=}& (8\pi e +o_p(1))
\left[p\int_{B_{r}(x_j)} u_{p}^{p+1}\,dx+o_p(1)\right]^{\frac{2}{p+1}} +o_p(1)
\nonumber\\
&{=}& 8\pi e +o_p(1)
 \end{eqnarray}
where in the last line we have used that $c\leq p\int_{B_{r}(x_j)} u_{p}^{p+1}\,dx\leq C$ which follows by the assumption on the energy bound \eqref{energylimit} and from \eqref{energiaBPallaLontanaDaZero}.
Finally observe that integrating by parts and using the equation \eqref{problem} we also have
\begin{eqnarray}
\label{minni}
p\int_{B_r(x_j)}|\nabla u_p|^2\,dx &=& p\int_{B_r(x_j)} u_p^{p+1}\,dx -p\int_{\partial B_r(x_j)} u_p\frac{\partial u_p}{\partial\nu}
\nonumber\\
&\overset{\eqref{pu_va_a_zero}}{=}& p\int_{B_r(x_j)} u_p^{p+1}\,dx + o_p(1)
\nonumber\\
&\overset{\mbox{\footnotesize{ as in }}\eqref{perdidi}}{=}&
 p\int_{B_{\frac{r}{2}}(y_{j,p})}u_p(x)^{p+1}\ dx + o_p(1)
\nonumber\\
&\overset{\eqref{defRiscalataMax}}{=}& u_p(y_{j,p})^2\int_{B_{\frac{r}{2\varepsilon_{j,p}}}(0)} \left(1+\frac{w_{j,p}(z)}{p}  \right)^{p+1}\,dz
\nonumber\\
&\overset{(*)}{=}& u_p(y_{j,p})^2 \left(o_p(1) + \int_{\R^2}e^{U(z)}dz \right)
\nonumber\\
&=& u_p(y_{j,p})^2 \ 8\pi +o_p(1),
\end{eqnarray}
where $(*)$ follows by the dominated convergence theorem in the same way as in \eqref{primoPezzoASxprece}.\\
Putting together \eqref{topolino} and \eqref{minni} we get the conclusion.
\end{proof}

\

\begin{proof}[Proof of Theorem \ref{teo:Positive}]
The statements of Theorem \ref{teo:Positive} have been proved in the various propositions obtained so far. In particular \emph{(i)} is the statement \eqref{pu_va_a_zero} in Proposition \ref{thm:x1N}, \emph{(ii)} derives from Lemma \ref{lemma:sommeDiGreen} and \eqref{valoreGamma} and \eqref{N=k} of Proposition \ref{proposizione:quasiQuanteConclusione}. The energy limit \emph{(iii)} is claim \eqref{energiaSemiQuantizzata} in Proposition \ref{proposizione:quasiQuanteConclusione}, together with \eqref{N=k}. The statement \emph{(iv)} is the assertion of Proposition \ref{prop:equazionePuntiConcentrazione} and \emph{(v)} is proved in Proposition \ref{prop:maxGeqE}.
\end{proof}

\

\

\end{document}